\newcommand{\mf}{\mathfrak}
\newcommand{\g}{\mf{g}}
\newcommand{\h}{\mf{h}}
\def\span{\operatorname{span}}
\newcommand{\Z}{{\mathbb Z}}
\newcommand{\C}{{\mathbb C}}
\newcommand{\N}{{\mathbb N}}
\newcommand{\K}{{\mathcal K}}
\newcommand{\M}{{\mathcal M}}
\renewcommand{\a}{\ensuremath{\alpha}}
\renewcommand{\l}{\ensuremath{\lambda}}
\newcommand{\ad}{\operatorname{ad}\xspace}
\renewcommand{\phi}{\varphi}
\renewcommand{\leq}{\leqslant}
\renewcommand{\geq}{\geqslant}
\newcommand{\F}{\mathbb{F}}
\newcommand{\LL }{\mathcal{L}}
\newcommand{\II}{\mathcal{I}}
\newcommand{\bb}{\mathfrak{b}}
\newcommand{\spn}{\mathrm{span}}
\def\sl{\mathfrak{sl}}
\def\l{\lambda}
\def\ad{\text{ad}}
\def\span{\text{span}}
\def\sl{\mathfrak{sl}}
\def\C{\mathbb{C}}
\def\Z{\mathbb{Z}}
\def\N{\mathbb{N}}
\def\bt{{\bf{t}}}
\def\bm{{\bf{m}}}
\def\bn{{\bf{n}}}
\def\l{\lambda}
\def\LL{\mathcal{L}}
\newcommand{\tg}{\tilde {\mathfrak g} }
\newcommand{\hg}{\hat {\mathfrak g} }
\newtheorem{theorem}{Theorem}[section]
\newtheorem{proposition}[theorem]{Proposition}
\newtheorem{lemma}[theorem]{Lemma}
\newtheorem{corollary}[theorem]{Corollary}
\theoremstyle{remark}
\newtheorem{remark}[theorem]{Remark}
\newtheorem{example}[theorem]{Example}
\numberwithin{equation}{section}
\begin{document}
\title[Zero product determined Lie algebras]{Zero product determined Lie algebras}

\author{Matej Brešar,  Xiangqian Guo,  Genqiang Liu, Rencai L\" u, Kaiming Zhao}
\address{M. B.: Faculty of Mathematics and Physics,  University of Ljubljana,  and Faculty of Natural Sciences and Mathematics, University
of Maribor, Slovenia}
\email{matej.bresar@fmf.uni-lj.si}
\address{X. G.: School of Mathematics and Statistics, Zhengzhou University,
Zhengzhou, P. R. China}
\email{guoxq@zzu.edu.cn }
\address{G. L.: School of Mathematics and Statistics, Henan University, Kaifeng, P. R. China}
\email{liugenqiang@amss.ac.cn}
\address{R. L.: Department of Mathematics, Soochow University, Suzhou, P. R. China }
\email{rencail@amss.ac.cn}
\address{K. Z.: Department of Mathematics, Wilfrid Laurier University, Waterloo,  Canada, and   College of Mathematics and Information Science, Hebei Normal  University, Shijiazhuang,  P. R. China }
\email{kzhao@wlu.ca}

\begin{abstract}
A Lie algebra $L$ over a field $\mathbb{F}$ is said to be  zero product determined (zpd) if  every bilinear map
$f:L\times L\to \mathbb{F}$ with the property that $f(x,y)=0$ whenever $x$ and $y$ commute is a coboundary.
The main goal of the paper is to determine whether or not some important Lie algebras are zpd. We show that
the Galilei Lie algebra $\mathfrak{sl}_2\ltimes V$, where $V$ is a simple $\mathfrak{sl}_2$-module, is zpd if and only if $\dim V =2$ or $\dim V$ is odd.
The class of zpd Lie algebras also includes the quantum torus Lie algebras $\mathcal{L}_q$ and $\mathcal{L}^+_q$, the untwisted  affine Lie algebras,
the Heisenberg Lie algebras, and
all Lie algebras of dimension at most $3$, 
while the class of non-zpd Lie algebras includes
the ($4$-dimensional) aging Lie algebra $\mathfrak {age}(1)$ and all Lie algebras of dimension more than $3$ in which only linearly dependent elements  commute.
 We also give some evidence of  the usefulness of the concept of a zpd Lie algebra by using it in the study of commutativity preserving linear maps.
	\end{abstract}

\thanks{\emph{Mathematics Subject Classification}. 17B05, 17B40, 17B65, 17B99.}
\keywords{Zero product determined Lie algebra, zero product determined module, Galilei Lie algebra, quantum torus Lie algebra, Heisenberg Lie algebra, affine Lie algebra, commutativity preserving map.}

\maketitle

\section{Introduction}
We say that a Lie algebra $L$ over a field $\F$ is {\em  zero product determined} ({\em zpd} for short) if for every bilinear map $f:L\times L\to \F$ with the property
\begin{equation}\label{e1}
[x,y] =0\Longrightarrow f(x,y)=0,\ \forall\ x,y\in L,
\end{equation}
there exists a linear map $\Phi:[L,L]\to \F$ such that
 \begin{equation}\label{e2}
f(x,y)= \Phi\big([x,y]\big),\ \forall\ x,y\in L.
\end{equation}
Thus, borrowing  the Lie algebra cohomology terminology, 
 $L$ is zpd if  \eqref{e1} implies that $f$ is a coboundary.  

The question whether \eqref{e1} implies \eqref{e2} was first studied in \cite{BS}, for the case where $L$ is the Lie algebra of all $n\times n$ matrices. The  answer, which is easily seen to be positive, was used as a crucial tool in describing commutativity preserving linear maps on  matrix algebras. The concept of a zpd algebra was introduced slightly later, in \cite{BGS}, not only for Lie algebras but for general, not necessarily associative algebras (in the definition, one just replaces $[\,\cdot\,, \,\cdot\,]$ in \eqref{e1} and \eqref{e2} by the product in the algebra in question). So far, however, the focus was primarily  on associative algebras -- not only in pure algebra where the concept is quite well understood,  especially in finite dimensions \cite{B}, but also, and in fact  more extensively,  in functional analysis where one additionally assumes that the maps in question are continuous (see \cite{ABEV} and the subsequent series of papers on the so-called Banach algebras with property $\mathbb B$). Not much has been known, so far, about which Lie algebras are zpd. 
This question was studied sporadically in some papers, including the seminal work \cite{BGS}, and more systematically in \cite{Gl} and \cite{WXZ} (and \cite{ABEV2} in the analytic context). 
In  \cite{WXZ} it was shown that parabolic  subalgebras of complex finite dimensional simple Lie algebras are zpd. This readily implies 
that complex finite dimensional semisimple  Lie algebras are zpd, to point out one basic example. For infinite dimensional Lie algebras, as well as  for other finite dimensional Lie algebras, the question is largely open. The aim of
this paper is to remedy this situation. We will prove that various different Lie algebras are zpd, but also  various different are not. 
Given a Lie algebra, it is usually not easy to 
guess  whether it is  zpd or not. 
This makes the problem challenging and, in our opinion, interesting in its own right. Another motivation for us for studying it is the striking similarity between the notion of a zpd Lie algebra and the notion of a group with  trivial Bogomolov multiplier  (see, e.g.,~\cite{M}).
Finally, as we will see at the end of the paper, knowing that a Lie algebra is zpd can be used for describing commutativity preserving linear maps.  Hopefully other applications will be found later, just as they  were found for zpd associative algebras (cf. \cite{ABEV, B}).

The paper is organized as follows.  In Section \ref{s2} we survey some general facts on zpd Lie algebras, and also introduce and study zero product determined modules which play an important role in some of the subsequent sections. Sections \ref{s3}-\ref{s5} are rather easy;
in Section \ref{s3} we show that every Lie algebra of dimension at most 3 is zpd, in Section \ref{s4} we show that a Lie algebra  in which only  linearly dependent commute is not zpd if its dimension is greater than $3$, and in Section \ref{s5} we show that the Heisenberg Lie algebras  are zpd. The next sections contain deeper results. 
In Section \ref{s6} we show that the Galilei Lie algebra $\sl_2\ltimes V$, where $V$ is a simple $\sl_2$-module, is zpd if and only if $\dim V =2$ or $\dim V$ is odd.
We also find a $4$-dimensional algebra, namely  the aging algebra $\mathfrak{age}(1)$, that is not zpd. In Section \ref{s7} we  show that the quantum torus Lie algebras $\LL_q$ and $\LL^+_q$ are zpd, and in Section \ref{s8} we show that the untwisted affine Lie algebras are zpd. The final section, Section \ref{s9}, is devoted to applications of the zpd
property to  commutativity preserving linear maps.

\section{General remarks on zpd Lie algebras}\label{s2}

In this section we gather together several elementary observations on zpd Lie algebras, some new and some already known.
First a word on notation: throughout, $\F$ denotes a field, and all algebras, modules and vector spaces are assumed to be over $\F$. Sometimes we will assume that $\F=\C$; if this is not assumed, then $\F$ is just an arbitrary field. By $\N$ and  $\Z_+$ we denote the set of all positive integers and all nonnegative integers.

We begin by giving some equivalent
 definitions of zpd Lie algebras (they are actually valid in greater generality, but we will formulate them for Lie algebras for convenience).
First of all, note that the existence of a linear map  $\Phi:[L,L]\to \F$ satisfying \eqref{e2} is equivalent to the condition
\begin{equation}\label{e3}
\sum_i [x_i,y_i]=0\Longrightarrow \sum_i f(x_i,y_i)=0,\ \forall\ x_i,y_i\in L.
\end{equation}
Indeed, if \eqref{e3} holds, then the map $\Phi:[L,L]\to X$ given by $$\Phi\Bigl(\sum_i [x_i,y_i]\Bigr)= \sum_i f(x_i,y_i)$$ is well-defined and satisfies \eqref{e2}, and the converse is obvious.
 In the next lemma we show that  bilinear maps from the definition may have their ranges in arbitrary vector spaces, not necessarily in the field of scalars
(this is actually taken as the definition in the seminal paper \cite{BGS}).
 The proof is very easy, but the fact itself is important for applications.

\begin{lemma}\label{lx}
Let $L$ be a zpd Lie algebra. If $X$ is an arbitrary vector space  and $f:L\times L\to X$ is a bilinear map satisfying
\begin{equation*}\label{zpd}
[x,y] =0\Longrightarrow f(x,y)=0,\ \forall\ x,y\in L,
\end{equation*}
then
there exists a linear map $\Phi:[L,L]\to X$ such that
$$
f(x,y)= \Phi\big([x,y]\big),\ \forall\ x,y\in L.
$$
\end{lemma}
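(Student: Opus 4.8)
The plan is to reduce the vector-space-valued case to the scalar-valued case that is built into the definition of a zpd Lie algebra. The point is that a bilinear map $f:L\times L\to X$ is completely controlled by the family of scalar bilinear maps obtained by composing with linear functionals on $X$, together with the fact that linear functionals separate points of $X$.

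First I would observe that, by the reformulation \eqref{e3} of the zpd condition, it suffices to show that
$\sum_i [x_i,y_i]=0$ implies $\sum_i f(x_i,y_i)=0$ for all $x_i,y_i\in L$; once this is established, the map
$\Phi:[L,L]\to X$ defined by $\Phi\bigl(\sum_i[x_i,y_i]\bigr)=\sum_i f(x_i,y_i)$ is well defined and linear and satisfies the required identity. So fix $x_i,y_i\in L$ with $\sum_i[x_i,y_i]=0$, and set $v=\sum_i f(x_i,y_i)\in X$. I want to show $v=0$.

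Take an arbitrary linear functional $\lambda\in X^*$ and consider the bilinear map $\lambda\circ f:L\times L\to\F$. If $[x,y]=0$ then $f(x,y)=0$ by hypothesis, hence $(\lambda\circ f)(x,y)=0$; thus $\lambda\circ f$ satisfies \eqref{e1}. Since $L$ is zpd, the equivalent condition \eqref{e3} holds for $\lambda\circ f$, and therefore
$\lambda(v)=\sum_i(\lambda\circ f)(x_i,y_i)=0$. As this holds for every $\lambda\in X^*$, and linear functionals separate the points of any vector space over $\F$, we conclude $v=0$, which is exactly \eqref{e3} for $f$. This completes the proof.

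There is really no serious obstacle here; the only thing to be a little careful about is the logical direction, namely that one should pass through the equivalent formulation \eqref{e3} rather than trying to build $\Phi$ directly, since the separating-functionals argument naturally produces a statement about sums vanishing rather than a ready-made linear map. The fact that a vector space over an arbitrary field has enough linear functionals to separate points (a consequence of the existence of bases/complements) is all that is needed and requires no restriction on $\F$ or on dimensions.
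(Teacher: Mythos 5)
Your proof is correct and follows essentially the same route as the paper's: compose $f$ with an arbitrary linear functional on $X$, apply the scalar zpd property in the equivalent form \eqref{e3}, and conclude since functionals separate points. No issues.
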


\begin{proof}
Take a linear functional $\xi$ on $X$. The map $f_\xi:=\xi\circ f:L\times L\to \F$ satisfies the condition that $[x,y]=0$ implies $f_\xi(x,y)=0$. Consequently,
there exists a linear functional $\Phi_\xi$ on $[L,L]$ such that $f_\xi(x,y)=\Phi_\xi([x,y])$ for all $x,y\in L$. Thus, for all $x_i,y_i\in L$, $\sum_i [x_i,y_i]=0$ implies
$\sum_i f_\xi (x_i,y_i)=0$, i.e., $\xi\big(\sum_i f (x_i,y_i)\big)=0$. Since $\xi$ is an arbitrary linear functional on $X$, this actually shows that
$f$ satisfies \eqref{e3}.
 \end{proof}

Given a Lie algebra $L$, denote by $\M_L$ the kernel of the linear map $L\otimes L\to L$ defined by $x\otimes y\mapsto [x,y]$, i.e.,
$$\M_L= \Big\{\sum_i x_i\otimes y_i\in L\otimes L \,|\, x_i,y_i\in L,\,\,\sum_i [x_i,y_i]=0\Big\}.$$
If $L$ is finite dimensional, we see that $\dim\M_L=(\dim L)^2-\dim [L,L]$.
Further, let us write
$$\K_L= \{x\otimes y\in L\otimes L\,|\,x,y\in L,\, [x,y]=0\}.$$
Obviously, we have
$${\rm span}\, \K_L\subseteq \M_L.$$
We also remark that
\begin{equation}\label{xyyx}
x\otimes y + y\otimes x = (x+y)\otimes (x+y) - x\otimes x - y\otimes y \in {\rm span}\,\K_L,\ \forall\ x,y\in L.
\end{equation}
The next lemma is essentially \cite[Theorem 2.3]{BH}. Nevertheless, we give the proof for the sake of completness.

\begin{lemma}\label{lemkm}
A Lie algebra $L$ is zpd if and only if ${\rm span}\, \K_L=\M_L$.
\end{lemma}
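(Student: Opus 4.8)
The plan is to translate the definition of zpd, which is about bilinear maps, into the linear-algebraic statement about the subspaces $\mathrm{span}\,\K_L$ and $\M_L$ of $L\otimes L$. The key observation is that bilinear maps $f\colon L\times L\to\F$ correspond bijectively to linear functionals on $L\otimes L$ via $f(x,y)=\widetilde f(x\otimes y)$, and under this correspondence the hypothesis \eqref{e1} (that $f$ vanishes on commuting pairs) says precisely that $\widetilde f$ vanishes on $\K_L$, hence on $\mathrm{span}\,\K_L$. On the other side, by the equivalence of \eqref{e2} and \eqref{e3} recorded just above, the conclusion that $f$ is a coboundary says precisely that $\widetilde f$ vanishes on $\M_L$. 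So $L$ is zpd if and only if every linear functional vanishing on $\mathrm{span}\,\K_L$ also vanishes on $\M_L$.

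For the ``if'' direction, suppose $\mathrm{span}\,\K_L=\M_L$ and let $f$ satisfy \eqref{e1}. Then $\widetilde f$ vanishes on $\K_L$, hence on $\mathrm{span}\,\K_L=\M_L$, so by the equivalence of \eqref{e3} and \eqref{e2} there is a linear map $\Phi\colon[L,L]\to\F$ with $f(x,y)=\Phi([x,y])$; thus $L$ is zpd. For the ``only if'' direction, suppose $L$ is zpd. We already have $\mathrm{span}\,\K_L\subseteq\M_L$, so it suffices to show the reverse inclusion; equivalently (since these are subspaces of a possibly infinite-dimensional space), that every linear functional on $L\otimes L$ vanishing on $\mathrm{span}\,\K_L$ also vanishes on $\M_L$. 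Given such a functional $\xi$, define $f(x,y):=\xi(x\otimes y)$; this is bilinear, and since $\xi$ kills every $x\otimes y$ with $[x,y]=0$, $f$ satisfies \eqref{e1}. By zpd-ness $f$ satisfies \eqref{e2}, hence \eqref{e3}, which says exactly that $\xi$ vanishes on $\M_L$. Since $\xi$ was arbitrary, $\M_L\subseteq\mathrm{span}\,\K_L$.

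There is essentially no hard step here; the only point requiring a moment's care is the passage, in the ``only if'' direction, from ``every linear functional vanishing on $\mathrm{span}\,\K_L$ vanishes on $\M_L$'' to ``$\M_L\subseteq\mathrm{span}\,\K_L$''. This is the standard fact that a subspace of a vector space is the intersection of the kernels of the linear functionals containing it (which holds in arbitrary dimension, using a Hamel basis to extend functionals). One can also phrase the whole argument intrinsically, without mentioning functionals, by noting that $L$ is zpd exactly when the linear map $L\otimes L\to L$, $x\otimes y\mapsto[x,y]$, factors through the quotient $(L\otimes L)/\mathrm{span}\,\K_L$ in a way compatible with every $f$ — but the functional formulation is cleanest and mirrors the proof of Lemma \ref{lx}.
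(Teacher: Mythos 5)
Your proof is correct and follows essentially the same route as the paper's: both rest on the bijection between bilinear maps $L\times L\to\F$ and linear functionals on $L\otimes L$, together with the fact that a proper subspace can be separated from a point by a linear functional (the paper phrases both directions contrapositively, you phrase them directly, but the content is identical). No issues.
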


\begin{proof}
Suppose that $ {\rm span}\, \K_L\ne \M_L$. Then there exists a linear functional $\xi$ on $L\otimes L$ such that
$\xi(\K_L) =\{0\}$ and
$\xi(\M_L) \ne\{0\}$. The bilinear map $f:L\times L\to \F$ given by $f(x,y)=\xi(x\otimes y)$, therefore satisfies \eqref{e1} but not \eqref{e3}. To prove the converse,
assume
 that $L$ is not zpd. Then there exists a bilinear map $f:L\times L\to \F$ that satisfies \eqref{e1} but not \eqref{e3}. The linear functional
$\xi:L\otimes L\to \F$, $\xi(x\otimes y)=f(x,y)$, therefore vanishes on $\K_L$ but not on $\M_L$. Accordingly, ${\rm span}\, \K_L\neq\M_L$.
\end{proof}

It is even more convenient to consider the wedge product $L\wedge L$ and define
$$\M'_L= \Big\{\sum_i x_i\wedge y_i\in L\wedge L \,|\, x_i,y_i\in L,\,\,\sum_i [x_i,y_i]=0\Big\},$$
$$\K'_L= \{x\wedge y\in L\wedge L\,|\,x,y\in L,\, [x,y]=0\}.$$
The following results are obvious.

\begin{lemma}\label{Wedge} Let $L$ be a Lie algebra.
\begin{enumerate}
\item[(a)]
 $L$ is zpd if and only if ${\rm span}\, \K'_L=\M'_L$.

\item[(b)] If the linear map $L\wedge L\to [L, L], x\wedge y\mapsto [x, y]$ is a linear isomorphism, then $L$ is zpd.

\item[(c)] If $\dim\,L\wedge L=\dim\, [L, L]< \infty$, then $L$ is zpd.

\item[(d)] If $\dim\,(L\wedge L)/{\rm span}\,\{x\wedge y | x,y\in L, [x, y]=0\}=\dim\, [L, L]< \infty$, then $L$ is zpd.

\end{enumerate}
\end{lemma}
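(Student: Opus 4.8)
The plan is to deduce all four parts from Lemma \ref{lemkm}, transporting the criterion from the tensor square to the wedge square. For (a), I would work with the canonical surjection $\pi\colon L\otimes L\to L\wedge L$. The bracket map $L\otimes L\to L$ kills every $x\otimes x$, hence factors as $\mu\circ\pi$ for a well-defined linear map $\mu\colon L\wedge L\to L$, $x\wedge y\mapsto[x,y]$, whose image is $[L,L]$ and whose kernel is exactly $\M'_L$. Since $\pi$ is surjective and $\mu\circ\pi$ is the bracket map, one gets $\pi(\M_L)=\M'_L$ and $\pi({\rm span}\,\K_L)={\rm span}\,\K'_L$. The key point is that $\ker\pi\subseteq\M_L$ (it maps to $0$ under the bracket) and $\ker\pi\subseteq{\rm span}\,\K_L$ (each $x\otimes x$ lies in $\K_L$, while $x\otimes y+y\otimes x\in{\rm span}\,\K_L$ by \eqref{xyyx}); hence, for subspaces containing $\ker\pi$, equality upstairs is equivalent to equality of their $\pi$-images, so ${\rm span}\,\K_L=\M_L$ if and only if ${\rm span}\,\K'_L=\M'_L$. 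Lemma \ref{lemkm} then yields (a).

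Parts (b)--(d) are short consequences obtained by exploiting that $\mu\colon L\wedge L\to[L,L]$ is always surjective with kernel $\M'_L$. For (b): if $\mu$ is injective (which is all one really needs) then $\M'_L=\{0\}$, so ${\rm span}\,\K'_L\subseteq\M'_L=\{0\}=\M'_L$ and (a) applies. For (c): a surjection between finite-dimensional spaces of equal dimension is bijective, so the hypothesis makes $\mu$ an isomorphism and (b) applies. For (d): since ${\rm span}\,\K'_L\subseteq\ker\mu=\M'_L$, the map $\mu$ descends to a surjection $\bar\mu\colon (L\wedge L)/{\rm span}\,\K'_L\to[L,L]$ with kernel $\M'_L/{\rm span}\,\K'_L$; the dimension hypothesis forces $\bar\mu$ to be an isomorphism, hence $\M'_L={\rm span}\,\K'_L$ and (a) applies once more.

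No step is a genuine obstacle; the whole lemma is bookkeeping around the decomposition $L\otimes L\cong(L\wedge L)\oplus S^2L$. The only point deserving a moment of care is (a) in characteristic $2$, where $L\wedge L$ must be read as $L\otimes L$ modulo the submodule generated by the elements $x\otimes x$; the argument above was phrased so as to use only the inclusions $\ker\pi\subseteq\M_L$ and $\ker\pi\subseteq{\rm span}\,\K_L$ rather than a direct-sum splitting, so it survives this case unchanged.
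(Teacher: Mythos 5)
Your proof is correct, and it supplies exactly the bookkeeping the paper suppresses: the authors simply declare Lemma \ref{Wedge} "obvious," the intended route being precisely your reduction to Lemma \ref{lemkm} via the quotient $L\otimes L\to L\wedge L$, using \eqref{xyyx} and $x\otimes x\in\K_L$ to see that the kernel of this quotient lies in ${\rm span}\,\K_L\subseteq\M_L$. Your care about characteristic $2$ and your observation that (b) only needs injectivity of $x\wedge y\mapsto[x,y]$ are both sound refinements rather than deviations.
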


\begin{remark}
The notion of a zpd Lie algebra may remind one on the notion of a {\em centrally closed Lie algebra}, i.e., a Lie algebra on which every $2$-cocycle is a coboundary (equivalently, the second cohomology group $H^2(L,\F)$ is trivial). Let us therefore make some comment on this. Recall that a $2$-cocycle on a Lie algebra $L$ is a skew-symmetric bilinear map
$f:L\times L\to\F$ satisfying
$$
f([x,y],z) + f([z,x],y) + f([y,z],x)=0,\ \forall\ x,y,z\in L,
$$
and that a coboundary is a bilinear map $f:L\times L\to\F$ of the form \eqref{e2} for some linear map $\Phi:[L,L]\to \F$.
A coboundary is obviously a $2$-cocycle, while the converse is not always true. Setting
$$\mathcal C_L= \{x\otimes y + y\otimes x, [x,y]\otimes z + [z,x]\otimes y + [y,z]\otimes x\,|\,x,y,z\in L\}$$
one can show, just by following the proof of Lemma \ref{lemkm}, that
$$\mbox{$L$ is centrally closed}\iff {\rm span}\, \mathcal C_L=\M_L.$$
The spaces  ${\rm span}\, \mathcal C_L$ and ${\rm span}\, \K_L$ do not seem to be connected in some simple way, and the problem of showing that $L$ is zpd is,  in general, independent of the problem of showing that $L$ is centrally closed.  Given any Lie algebra $L$, it may be of  interest to describe and compare the spaces
 ${\rm span}\, \mathcal C_L$, ${\rm span}\, \K_L$, and $\M_L$. However, in this paper we will not touch upon this problem.
\end{remark}

The next lemma provides two conditions under which a homomorphic image of a zpd Lie algebra is again zpd. The first condition, (a), is  known \cite[Corollary 5.3]{BH}, but we give the proof anyway.

\begin{lemma} \label{lab}
Let $L$ be a zpd Lie algebra. If an ideal $I$ of $L$ satisfies at least one of the following two conditions:
\begin{enumerate}
\item[(a)] $[L,I] = [L,L]\cap I$,
\item[(b)] the Lie algebra $L/I$ is centrally closed,
\end{enumerate}
then $L/I$ is also a zpd Lie algebra.
\end{lemma}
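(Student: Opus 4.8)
The plan is to work with the characterization from Lemma~\ref{lemkm}, namely that a Lie algebra $M$ is zpd exactly when $\span\,\K_M=\M_M$. Write $\pi\colon L\to L/I$ for the quotient map and also $\pi$ for the induced surjection $L\otimes L\to (L/I)\otimes (L/I)$. Since $L$ is zpd, we have $\span\,\K_L=\M_L$, and applying $\pi$ gives $\pi(\M_L)=\pi(\span\,\K_L)=\span\,\pi(\K_L)\subseteq\span\,\K_{L/I}$, because $\pi$ sends a commuting pair in $L$ to a commuting pair in $L/I$. So the whole problem reduces to showing $\M_{L/I}\subseteq\pi(\M_L)+\span\,\K_{L/I}$; equivalently, every element $\sum_i \ol{x_i}\otimes\ol{y_i}$ of $(L/I)\otimes(L/I)$ with $\sum_i[\ol{x_i},\ol{y_i}]=0$ can be lifted, modulo $\span\,\K_{L/I}$, to an element of $\M_L$. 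Lifting arbitrarily to $t=\sum_i x_i\otimes y_i\in L\otimes L$, the obstruction is that $[t]:=\sum_i[x_i,y_i]$ need not vanish; we only know $[t]\in[L,L]\cap I$.

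For case~(a), the hypothesis $[L,I]=[L,L]\cap I$ says precisely that $[t]=\sum_j[a_j,b_j]$ for some $a_j\in L$, $b_j\in I$ (or the other slot). Then $t':=t-\sum_j a_j\otimes b_j$ lies in $\M_L$, hence in $\span\,\K_L$, so $\pi(t')\in\span\,\K_{L/I}$; and $\pi(\sum_j a_j\otimes b_j)=\sum_j\ol{a_j}\otimes\ol{b_j}=0$ since $b_j\in I$. Therefore the original element $\pi(t)=\pi(t')\in\span\,\K_{L/I}$, which is what we needed. (One should be slightly careful about which tensor slot the correction lands in, but \eqref{xyyx} shows $\span\,\K_L$ is symmetric under the flip $x\otimes y\mapsto y\otimes x$ up to elements of $\span\,\K_L$ itself — actually it shows $x\otimes y+y\otimes x\in\span\,\K_L$, so $x\otimes y\equiv -y\otimes x$ modulo $\span\,\K_L$, which suffices to move corrections between slots; working in $L\wedge L$ via Lemma~\ref{Wedge}(a) avoids this bookkeeping entirely.)

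For case~(b), a clean route is to interpret the statement cohomologically. Suppose $L/I$ is centrally closed and take any bilinear $f\colon (L/I)\times(L/I)\to\F$ killing commuting pairs. Pull back along $\pi$ to get $\tilde f=f\circ(\pi\times\pi)\colon L\times L\to\F$, which kills commuting pairs of $L$; since $L$ is zpd, $\tilde f(x,y)=\Psi([x,y])$ for some linear $\Psi\colon[L,L]\to\F$. Now $\tilde f$ vanishes whenever either argument is in $I$ (because $\pi$ kills it), so $\Psi$ vanishes on $[L,I]$ and on $[I,L]$; but $\tilde f$ is not literally skew-symmetric, so instead one symmetrizes: replacing $f$ by $\tfrac12(f-f^{\mathrm{op}})$ where $f^{\mathrm{op}}(x,y)=f(y,x)$ is harmless (the symmetric part $x\otimes y+y\otimes x$ is already in $\span\,\K$ by \eqref{xyyx}, so the symmetric part of $f$ automatically factors through the bracket and we may assume $f$ skew). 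With $f$ skew, $\tilde f$ is a skew bilinear form on $L$ vanishing on commuting pairs, hence of the form $\Psi([x,y])$; the identity $\tilde f([x,y],z)+\tilde f([z,x],y)+\tilde f([y,z],x)=\Psi([[x,y],z]+[[z,x],y]+[[y,z],x])=0$ by the Jacobi identity shows $\tilde f$ is a $2$-cocycle on $L$. It descends to a skew bilinear $\bar f=f$ on $L/I$ which, since $\tilde f$ vanishes on $L\otimes I$ and $I\otimes L$, is well-defined and is a $2$-cocycle on $L/I$ (the cocycle identity passes to the quotient). Because $L/I$ is centrally closed, $f$ is a coboundary on $L/I$, i.e.\ $f(\bar x,\bar y)=\Phi([\bar x,\bar y])$, which is exactly \eqref{e2}. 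Hence $L/I$ is zpd.

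The main obstacle is case~(b): one must make sure that the $2$-cocycle identity really descends to $L/I$ and that the passage from the pulled-back form $\tilde f$ on $L$ to a genuine cocycle on $L/I$ is legitimate despite $\tilde f$ only being guaranteed to vanish on $I\otimes L+L\otimes I$ after the skew-symmetrization step. A more economical alternative, which I would actually prefer to write up, is purely linear-algebraic and parallels case~(a): given $t=\sum_i x_i\otimes y_i$ with $\pi$-image in $\M_{L/I}$, we have $[t]\in[L,L]\cap I$; central closedness of $L/I$ is exactly the statement $\span\,\mathcal C_{L/I}=\M_{L/I}$ (noted in the Remark), so $\sum_i\ol{x_i}\otimes\ol{y_i}\in\span\,\mathcal C_{L/I}$, i.e.\ it is a combination of symmetric tensors $\ol u\otimes\ol v+\ol v\otimes\ol u$ (which lie in $\span\,\K_{L/I}$ by \eqref{xyyx}) and Jacobi tensors $[\ol u,\ol v]\otimes\ol w+\text{cyc}$. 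The Jacobi tensors are killed by the bilinear map sending $a\otimes b$ to $f(a,b)$ for \emph{any} $f$ satisfying \eqref{e1}–\eqref{e3}, so they too lie in $\M_{L/I}$; lifting them back to Jacobi tensors in $L$ (which lie in $\M_L=\span\,\K_L$) and projecting shows they lie in $\span\,\K_{L/I}$. Assembling, $\pi(t)\in\span\,\K_{L/I}$, so $\M_{L/I}=\span\,\K_{L/I}$ and $L/I$ is zpd.
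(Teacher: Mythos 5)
Your argument is correct, but it is the dual of the paper's proof rather than a rewording of it. The paper works directly with bilinear maps: for (a) it pulls $f$ back to $L$, obtains $\bar f(x,y)=\bar\Phi([x,y])$, and then verifies that the induced map $\Phi$ on $[L/I,L/I]$ is well defined, which is exactly where the hypothesis $[L,I]=[L,L]\cap I$ enters; for (b) it derives the cocycle identity for $f$ from the coboundary form of $\bar f$ and invokes central closedness. You instead translate everything through Lemma \ref{lemkm} into the tensor square, so that (a) becomes a lift-and-correct argument ($[t]\in[L,L]\cap I=[L,I]$ lets you subtract $\sum_j a_j\otimes b_j$ with $b_j\in I$, which dies under $\pi$ while the remainder lands in $\M_L={\rm span}\,\K_L$), and your preferred version of (b) uses the Remark's characterization ${\rm span}\,\mathcal C_{L/I}=\M_{L/I}$ of central closedness, reducing everything to the observations that symmetric tensors lie in ${\rm span}\,\K$ by \eqref{xyyx} and that Jacobi tensors lift to Jacobi tensors of $L$, which lie in $\M_L={\rm span}\,\K_L$. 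What your route buys is transparency about exactly which tensors are being used (and it makes (b) genuinely short); what it costs is reliance on the Remark's equivalence, which the paper only sketches, whereas the paper's proof of (b) is self-contained. Two small tidying points: the symmetrization step in your first argument for (b) is unnecessary, since any $f$ satisfying \eqref{e1} has $f(x,x)=0$ (as $[x,x]=0$) and is therefore already alternating, hence skew-symmetric; this also sidesteps the division by $2$, which would be illegitimate in characteristic $2$. Likewise your worry about which tensor slot the correction occupies in (a) is moot, because the hypothesis hands you $[t]=\sum_j[a_j,b_j]$ with $b_j\in I$ already in the slot that $\pi$ annihilates.
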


\begin{proof}
Let $f:L/I\times L/I\to F$ be a bilinear map such that $f(r,s)=0$ whenever $r,s\in L/I$ commute. Then $\bar{f}:L\times L\to F$,
$\bar{f}(x,y)=f(x+I,y+I)$ satisfies $\bar{f}(x,y) =0$ whenever $x,y\in L$ commute. Since $L$ is zpd it follows that
\begin{equation}\bar{f}(x,y) = \bar{\Phi}([x,y])\label{a}\end{equation}
 for some linear map $\bar{\Phi}:[L,L]\to F$.

Suppose that (a) holds. We claim that the map $\Phi:[L/I,L/I]\to F$ given by   $$\Phi\Big(\sum_i [x_i+I,y_i+I]\Big) = \sum_i \bar{\Phi}\big([x_i,y_i]\big)$$
is  well-defined. Indeed, suppose that $\sum_i [x_i,y_i]\in I$. Since this element belongs also to $[L,L]$, (a) implies that
$$\sum_i [x_i,y_i] = \sum_j [z_j,u_j]$$ for some $z_j\in L$ and $u_j\in I$. From
$$\bar{\Phi}([z_j,u_j])= \bar{f}(z_j,u_j)=f(z_j + I, u_j + I) = f(z_j + I, 0)=0$$
we infer that
$$\sum_i \bar{\Phi}\big([x_i,y_i]\big) = \bar{\Phi}\Big(\sum_i [x_i,y_i]\Big) =  \bar{\Phi}\Big(\sum_j [z_j,u_j]\Big) = \sum_j \bar{\Phi}\big([z_j,u_j]\big)=0,$$
as desired. We therefore have
$$f(x+I,y+I) = \bar{f}(x,y)=\bar{\Phi}([x,y])= \Phi([x+I,y+I]),\ \forall\ x,y\in L,$$
proving that $L/I$ is zpd.

Assume now that (b) holds. Using \eqref{a} it follows immediately that
$$
\bar{f}([x,y],z) + \bar{f}([z,x],y) + \bar{f}([y,z],x)=0,\ \forall\ x,y,z\in L,
$$
that is,
$$f([r,s],t)+ f([t,r],s)+ f([s,t],r)=0,\ \forall\ r,s,t\in L/I.$$
Since (b) holds, the desired conclusion that $f$ is a coboundary follows.
\end{proof}

We now extend the notion of a zpd Lie algebra to modules as follows.
We will say that a module $V$ over a Lie algebra $L$ is {\em  zero action determined} ({\em  zad} for short)  if for every bilinear map
$f: L \times V\to \F$ with the property that
\begin{equation}\label{zad-1}
xv =0\Longrightarrow f(x,v)=0,\ \forall\ x\in L , v\in V,
\end{equation}
there exists a linear map $\Phi: LV\to \F$ such that
 \begin{equation}\label{zad-2}
f(x,v)= \Phi(xv),\ \forall\ x\in L , v\in V.
\end{equation}
Note that $L$ is a zpd Lie algebra if and only if  its adjoint module is zad.

The notion of a zad module is perhaps interesting in its own right. In this paper, however, we will use it only as a tool for answering the question whether certain Lie algebras
are zpd. 

Until the rest of this section we will consider some basic properties of zad modules. First we record the obvious analogue of Lemma \ref{lemkm}. Setting
$$\M_V= \Big\{\sum_i x_i\otimes v_i\,|\, x_i\in L,v_i\in V,\,\,\sum_i x_iv_i=0\Big\}$$
and
$$\K_V= \{x\otimes v\,|\,x\in L,v\in V,\,\, xv=0\},$$
we have

\begin{lemma}\label{lemkm2}  A module $V$ over a Lie algebra $L$ is zad if and only if ${\rm span}\, \K_V=\M_V$.
\end{lemma}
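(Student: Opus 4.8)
The plan is to mimic the proof of Lemma~\ref{lemkm} verbatim, replacing the adjoint module by the general module $V$ and the bracket $[\cdot,\cdot]$ by the action $(x,v)\mapsto xv$. Just as in the Lie algebra case, the first observation is that the existence of a linear map $\Phi\colon LV\to\F$ satisfying \eqref{zad-2} is equivalent to the implication
\begin{equation*}
\sum_i x_iv_i=0\Longrightarrow \sum_i f(x_i,v_i)=0,\ \forall\ x_i\in L,\ v_i\in V,
\end{equation*}
since if the latter holds then $\Phi\bigl(\sum_i x_iv_i\bigr):=\sum_i f(x_i,v_i)$ is a well-defined linear map on $LV$, and the converse is immediate. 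This last implication is precisely the statement that the linear functional $\xi\colon L\otimes V\to\F$ determined by $\xi(x\otimes v)=f(x,v)$ vanishes on $\M_V$.

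First I would prove the ``if'' direction. Suppose ${\rm span}\,\K_V=\M_V$ and let $f\colon L\times V\to\F$ be a bilinear map satisfying \eqref{zad-1}. Let $\xi\colon L\otimes V\to\F$ be the linear functional with $\xi(x\otimes v)=f(x,v)$. Condition \eqref{zad-1} says exactly that $\xi$ vanishes on every generator $x\otimes v$ of $\K_V$, hence on ${\rm span}\,\K_V=\M_V$; by the reformulation above this yields the desired $\Phi$, so $V$ is zad.

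Conversely, suppose ${\rm span}\,\K_V\ne\M_V$. Since ${\rm span}\,\K_V\subseteq\M_V$ always holds (if $xv=0$ then certainly $x\otimes v$ contributes nothing to the sum), there is a linear functional $\xi$ on $L\otimes V$ with $\xi({\rm span}\,\K_V)=\{0\}$ but $\xi(\M_V)\ne\{0\}$. The bilinear map $f(x,v):=\xi(x\otimes v)$ then satisfies \eqref{zad-1} but fails the summation implication, hence admits no $\Phi$, so $V$ is not zad.

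There is essentially no obstacle here: the argument is a routine transcription of Lemma~\ref{lemkm}, using that a linear functional on $L\otimes V$ is the same as a bilinear form on $L\times V$, and that a subspace of a vector space is cut out by the linear functionals vanishing on it. The only point worth stating explicitly is the inclusion ${\rm span}\,\K_V\subseteq\M_V$, which is what makes the separating functional in the ``only if'' direction exist; everything else is bookkeeping. Given its triviality, presenting the proof in the style of ``By the same argument as in the proof of Lemma~\ref{lemkm}'' and only spelling out the two short directions would be appropriate.
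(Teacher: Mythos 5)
Your proof is correct and is exactly the argument the paper intends: the paper states this lemma without proof as the ``obvious analogue of Lemma \ref{lemkm}'', and your transcription of that separating-functional argument (together with the reformulation of the existence of $\Phi$ as the summation implication) is precisely what is meant.
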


Let us only mention that  Lemmas \ref{lx} and \ref{lab}\,(a)
can also be easily extended to zad modules, but we do not need these results in this paper. The following elementary lemma, however, is needed.
We leave the proof  to the reader (cf. \cite[Theorem 3.1]{BH} which states that  the direct sum of Lie algebras is zpd if and only if every summand is zpd).


\begin{lemma}\label{lemsum} Let $L$ be a Lie algebra and let
 $\{V_i\,|\,i\in I\}$ be a family of $L$-modules. Then their direct sum $\oplus_{i\in I}V_i$ is zad if and only if each $V_i$, $i\in I$, is zad.
\end{lemma}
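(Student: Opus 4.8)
The plan is to prove Lemma~\ref{lemsum} by reducing everything to the characterization in Lemma~\ref{lemkm2}, namely that an $L$-module $W$ is zad if and only if $\span\,\K_W=\M_W$. Write $V=\bigoplus_{i\in I}V_i$ and note that $L\tnz V=\bigoplus_{i\in I}(L\tnz V_i)$ as vector spaces, with the action map $x\tnz v\mapsto xv$ respecting this decomposition since $L$ acts componentwise. Consequently $\M_V=\bigoplus_{i\in I}\M_{V_i}$, because $\sum_i x_i\tnz v_i$ with $v_i$ having components $v_i^{(j)}$ lies in $\M_V$ precisely when the component in each $L\tnz V_j$ separately maps to $0$.

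For the forward direction, suppose $V$ is zad. Fix $j\in I$; I would show $\span\,\K_{V_j}=\M_{V_j}$. The key point is that $V_j$ is a submodule of $V$, so a simple tensor $x\tnz v$ with $v\in V_j$ satisfies $xv=0$ in $V_j$ if and only if it does in $V$; hence $\K_{V_j}=\K_V\cap(L\tnz V_j)$ and similarly $\M_{V_j}=\M_V\cap(L\tnz V_j)$. Now take $\omega\in\M_{V_j}\subseteq\M_V=\span\,\K_V$, so $\omega=\sum_k c_k\,(x_k\tnz v_k)$ with each $x_kv_k=0$. Applying the projection $\pi_j:L\tnz V\to L\tnz V_j$ (which is an $L$-module-compatible linear map sending $x\tnz v\mapsto x\tnz v^{(j)}$), and using that $\pi_j$ fixes $\omega$, we get $\omega=\sum_k c_k\,(x_k\tnz v_k^{(j)})$; and each $x_k v_k^{(j)}=0$ because it is the $V_j$-component of $x_kv_k=0$. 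Thus $\omega\in\span\,\K_{V_j}$, giving $\M_{V_j}\subseteq\span\,\K_{V_j}$; the reverse inclusion is automatic. By Lemma~\ref{lemkm2}, $V_j$ is zad.

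For the converse, suppose each $V_i$ is zad, so $\M_{V_i}=\span\,\K_{V_i}$ for all $i$. Then, using $\M_V=\bigoplus_i\M_{V_i}$ and $\K_{V_i}\subseteq\K_V$, we obtain $\M_V=\bigoplus_i\span\,\K_{V_i}\subseteq\span\,\K_V\subseteq\M_V$, forcing equality, so $V$ is zad by Lemma~\ref{lemkm2}. Alternatively, and perhaps more cleanly for the write-up, one can argue directly with bilinear maps: given $f:L\times V\to\F$ vanishing on zero actions, restrict to $f_i:L\times V_i\to\F$, get $\Phi_i:LV_i\to\F$ with $f_i(x,v)=\Phi_i(xv)$, and observe $LV=\bigoplus_i LV_i$ so the $\Phi_i$ assemble to $\Phi:LV\to\F$ with $f(x,v)=\Phi(xv)$ for $v$ in any single summand; bilinearity then extends the identity to all of $V$ via $f\big(x,\sum_i v_i\big)=\sum_i f_i(x,v_i)=\sum_i\Phi_i(xv_i)=\Phi\big(x\sum_i v_i\big)$. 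There is no serious obstacle here; the only point requiring a moment's care is the bookkeeping with the direct-sum decompositions of $L\tnz V$ and of $LV$, in particular checking that $LV=\bigoplus_i LV_i$ (which holds since $L$ preserves each $V_i$) so that the assembled $\Phi$ is well defined — and, for the forward direction, the observation that restricting to a submodule commutes with forming $\K$ and $\M$.
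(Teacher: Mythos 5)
Your proof is correct. The paper in fact leaves this lemma to the reader, so there is no proof to compare against; your argument via Lemma~\ref{lemkm2} --- observing that the action map respects the decomposition $L\otimes V=\bigoplus_i(L\otimes V_i)$, hence $\M_V=\bigoplus_i\M_{V_i}$, and then using the projection $\pi_j$ to push a spanning expression from $\K_V$ down to $\K_{V_j}$ --- is exactly the routine verification the authors intended to omit, and both of your write-ups of the converse are sound.
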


As an application of this lemma, we will now show that the zpd property is stable under tensoring with commutative associative  unital algebras.

\begin{lemma}\label{current}Let $L$ be a zpd Lie algebra and let  $A$ be a commutative associative  unital algebra. Then the Lie algebra $L\otimes A$ is zpd. \end{lemma}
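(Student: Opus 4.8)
The plan is to regard $L \otimes A$ as an $L$-module in a suitable way and apply Lemma \ref{lemsum} together with Lemma \ref{lemkm2}. The Lie algebra $L \otimes A$ carries the bracket $[x \otimes a, y \otimes b] = [x,y] \otimes ab$; to show it is zpd, by the module reformulation it suffices to show that the adjoint module $L \otimes A$ over the Lie algebra $L \otimes A$ is zad. The first step is to reduce the acting algebra from $L \otimes A$ to $L \cong L \otimes 1$: since $A$ is unital, $L \otimes 1$ is a Lie subalgebra of $L \otimes A$, and one checks that if a bilinear $f : (L \otimes A) \times (L \otimes A) \to \F$ vanishes whenever the two arguments commute in $L \otimes A$, then its restriction to $(L \otimes 1) \times (L \otimes A)$ vanishes whenever $x \otimes 1$ annihilates the $L$-module element. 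The point is that $(L \otimes 1)$-commutation and $(L \otimes A)$-annihilation of elements of $L \otimes A$ coincide on the relevant pairs, so it is enough to prove that $L \otimes A$, viewed as an $L$-module via $x \cdot (y \otimes a) = [x,y] \otimes a$, is zad, and then to lift the resulting $\Phi$ back up. Actually the cleanest route is: prove $L \otimes A$ is zad as an $L$-module, and separately observe that a $\Phi$ defined on $L \cdot (L \otimes A) = [L,L] \otimes A$ already matches $f$ on all of $(L \otimes A) \times (L \otimes A)$ because every element of $L \otimes A$ is a sum of $y \otimes b = b(y \otimes 1)$-type pieces and the acting side can be pushed into $L \otimes 1$ using commutativity of $A$; I would spell this reduction out carefully.

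The core step is then to identify the $L$-module $L \otimes A$ as a direct sum of copies of the adjoint module $L$. Choose a basis $\{a_j\}_{j \in J}$ of the vector space $A$ (so $a_j$ need not interact nicely with multiplication, but we only use the $L$-module structure $x \cdot (y \otimes a) = [x,y] \otimes a$, which is $A$-linear in the obvious sense). Then $L \otimes A = \bigoplus_{j \in J} (L \otimes a_j)$ as $L$-modules, and each summand $L \otimes a_j \cong L$ is isomorphic to the adjoint module of $L$. Since $L$ is zpd, its adjoint module is zad; hence by Lemma \ref{lemsum} the direct sum $L \otimes A$ is zad as an $L$-module.

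The remaining work — and the step that needs the most care — is the lifting argument: from the fact that $L \otimes A$ is zad over $L$, deduce that $L \otimes A$ is zpd. Given $f : (L \otimes A) \times (L \otimes A) \to \F$ vanishing on commuting pairs, one restricts to get a map $g : L \times (L \otimes A) \to \F$, $g(x, u) = f(x \otimes 1, u)$, which vanishes when $x \otimes 1$ kills $u$; zad-ness yields $\Phi : [L,L] \otimes A \to \F$ with $g(x,u) = \Phi(x \cdot u)$. Then for general $x \otimes a, y \otimes b$ one writes $f(x \otimes a, y \otimes b)$ and uses bilinearity plus the identity $[x \otimes a, y \otimes b] = [x \otimes 1, y \otimes ab]$ in $L \otimes A$ — more precisely, that $x \otimes a$ and $x \otimes 1 \cdot (\text{something})$ differ in a way controlled by commuting pairs — to reduce $f(x \otimes a, y \otimes b)$ to $g(x, (y \otimes ab)) = \Phi([x,y] \otimes ab) = \Phi([x \otimes a, y \otimes b])$. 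The subtle point is that $f(x \otimes a, y \otimes b)$ is not literally $f(x \otimes 1, y \otimes ab)$; one must exploit that $(x \otimes a - x \otimes 1 \otimes \cdots)$-type corrections pair to zero against $y \otimes b$ because the corresponding brackets vanish, or alternatively argue via the $\M$/$\K$ formulation of Lemma \ref{lemkm} directly: show $\span\,\K_{L \otimes A} = \M_{L \otimes A}$ by pushing any relation $\sum_i [x_i \otimes a_i, y_i \otimes b_i] = 0$ into a relation among elements of the module $L \otimes A$ acted on by $L \otimes 1$. I expect the bookkeeping in this last reduction to be the main obstacle, though it is conceptually routine once the module decomposition above is in place.
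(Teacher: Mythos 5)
Your plan is essentially the paper's proof: it likewise works in the $\mathcal M/\mathcal K$ formulation, reduces every element of $\mathcal M_{L\otimes A}$ modulo $\span\,\mathcal{K}_{L\otimes A}$ to one whose first tensor factors lie in $L\otimes 1$, and then invokes Lemma~\ref{lemsum} applied to the decomposition $L\otimes A=\oplus_j\, L\otimes a_j$ into copies of the adjoint module. The one step you defer, the congruence $(x\otimes a_1)\otimes (y\otimes a_2)\equiv (x\otimes 1)\otimes (y\otimes a_1a_2)$ modulo $\span\,\mathcal{K}_{L\otimes A}$, is settled exactly as you suspect: the elements $x\otimes 1+y\otimes a_2$ and $x\otimes a_1+y\otimes a_1a_2$ commute, so their tensor product lies in $\mathcal{K}_{L\otimes A}$, and expanding it and discarding the $x$--$x$ and $y$--$y$ terms (together with an application of \eqref{xyyx} to swap the cross term) gives the desired congruence.
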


\begin{proof}From $$(x\otimes 1+y\otimes a_2)\otimes (x\otimes a_1+ y\otimes a_1a_2)\in \mathcal{K}_{L\otimes A},\ \forall\,\, x,y\in L, \forall\,\, a_1,a_2\in A,$$ it  easily follows that
$$(x\otimes a_1)\otimes (y\otimes a_2) \equiv (x\otimes 1)\otimes (y\otimes a_1a_2)\,\,\big({\rm mod} \,\,\span\, \mathcal{K}_{L\otimes A}\big).$$
 Now for every $\sum_i(x_i\otimes a_i)\otimes (y_i\otimes b_i)\in \mathcal{M}_{L\otimes A}$, we have
$$\sum_i(x_i\otimes a_i)\otimes (y_i\otimes b_i)\equiv \sum_i (x_i\otimes 1)\otimes (y_i\otimes a_ib_i)\,\,\big({\rm mod} \,\,\span\, \mathcal{K}_{L\otimes A}\big).$$
 Since $L$ is zpd, Lemma \ref{lemsum} implies that $L\otimes A$ is a zad module over  $L=L\otimes 1$. Consequently,
 $$\sum_i (x_i\otimes 1)\otimes (y_i\otimes a_ib_i)=\sum_j(z_j\otimes 1)\otimes u_j$$
 for some $z_j\in L, u_j\in L\otimes A$ with $$[z_j\otimes 1,u_j]=0,\ \forall j.$$
We have thus proved that
$$\sum_i(x_i\otimes a_i)\otimes (y_i\otimes b_i)\equiv 0\,\,\big({\rm mod} \,\,\span\, \mathcal{K}_{L\otimes A}\big),$$ showing that $L\otimes A$ is a zpd Lie algebra. \end{proof}

Recall that the {\it semidirect product} of a Lie algebra $L$ and an $L$-module $V$ is the Lie algebra $L \ltimes V$ defined on the space $L \oplus V$ by
$$[x+u, y+v]=[x,y]+xv-yu, \,\,\,\forall\,\,\, x,y\in L, \forall\,\, u,v\in V.$$

\begin{lemma} \label{lsd}
Let $L$ be a zpd Lie algebra and $V$ be a zad $L$-module. Then $L\ltimes V$ is a zpd Lie algebra. \end{lemma}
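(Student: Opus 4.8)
The plan is to work with the wedge-product criterion from Lemma~\ref{Wedge}(a), showing that $\span\,\K'_{L\ltimes V}=\M'_{L\ltimes V}$. Decompose the wedge square $(L\oplus V)\wedge(L\oplus V)$ into the three natural summands $L\wedge L$, $L\wedge V$, and $V\wedge V$, so that a general element $\omega\in\M'_{L\ltimes V}$ can be written as $\omega=\omega_{LL}+\omega_{LV}+\omega_{VV}$ with $\omega_{LL}=\sum x_i\wedge y_i$, $\omega_{LV}=\sum x_j'\wedge v_j$, $\omega_{VV}=\sum u_k\wedge w_k$. The bracket relation $[\omega]=0$ lives in $[L\ltimes V,L\ltimes V]=[L,L]+LV\subseteq L\oplus V$, and since $[L,L]\subseteq L$ while $LV\subseteq V$, the $L$-component and the $V$-component of the bracket vanish separately: this gives $\sum_i[x_i,y_i]=0$ in $L$ (note $\omega_{LV}$ and $\omega_{VV}$ contribute nothing to the $L$-part) and $\sum_j x_j'v_j + (\text{a contribution from }\omega_{VV}) = 0$ in $V$. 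The point is that $V$ is abelian inside $L\ltimes V$, so $\omega_{VV}$ lies entirely in $\span\,\K'_{L\ltimes V}$ and may be discarded; it does not even enter the bracket.

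Next I would handle the three pieces in turn. For $\omega_{LL}$: since $\sum_i[x_i,y_i]=0$ in $L$ and $L$ is zpd, Lemma~\ref{Wedge}(a) applied to $L$ gives $\sum_i x_i\wedge y_i\in\span\,\K'_L$, and each commuting pair $x\wedge y$ in $L$ with $[x,y]=0$ also commutes in $L\ltimes V$ (same bracket), so $\omega_{LL}\in\span\,\K'_{L\ltimes V}$. For $\omega_{VV}$: as noted, every $u\wedge w$ with $u,w\in V$ satisfies $[u,w]=0$ in $L\ltimes V$, so $\omega_{VV}\in\span\,\K'_{L\ltimes V}$ with no condition needed. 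The substantive case is $\omega_{LV}=\sum_j x_j'\wedge v_j$ with $\sum_j x_j'v_j=0$ in $V$: here I invoke that $V$ is a zad $L$-module. Concretely, $\sum_j x_j'\otimes v_j\in\M_V$, so by Lemma~\ref{lemkm2} it lies in $\span\,\K_V$, i.e.\ it is a linear combination of tensors $x\otimes v$ with $xv=0$; passing to the wedge, $x'\wedge v$ with $x'v=0$ means $[x',v]=x'v=0$ in $L\ltimes V$, hence $x'\wedge v\in\K'_{L\ltimes V}$. One must check the translation between the tensor criterion (Lemma~\ref{lemkm2}) and the wedge picture is harmless: the symmetric-type relations $x\otimes v + v\otimes x$ become zero in the wedge, and there is no $V\otimes L$ ambiguity here because the cross term sits in $L\wedge V$ canonically; alternatively one can note that the obvious wedge analogue $\span\,\K'_V=\M'_V$ of Lemma~\ref{lemkm2} holds by the same proof, so I would state and use that directly.

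Combining the three reductions, $\omega=\omega_{LL}+\omega_{LV}+\omega_{VV}\in\span\,\K'_{L\ltimes V}$, which together with the trivial inclusion $\span\,\K'_{L\ltimes V}\subseteq\M'_{L\ltimes V}$ gives equality, and Lemma~\ref{Wedge}(a) yields that $L\ltimes V$ is zpd. The main obstacle is bookkeeping the bracket on $L\ltimes V$ correctly across the three summands: one needs to be sure that the vanishing of $[\omega]$ really splits into the two independent conditions $\sum_i[x_i,y_i]=0$ and $\sum_j x_j'v_j=0$ (using $[L,L]\cap V=0$ and that $V$ is abelian), and that ``commuting'' for a pair from $L$ or a pair from $L\times V$ or a pair from $V\times V$ in the big algebra reduces exactly to ``commuting in $L$'', ``annihilating in the module'', or ``nothing'', respectively. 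Once that dictionary is pinned down, the proof is a direct application of the zpd hypothesis on $L$ and the zad hypothesis on $V$ via Lemmas~\ref{lemkm2} and~\ref{Wedge}(a), with the abelian part $V\wedge V$ coming for free.
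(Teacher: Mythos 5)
Your proof is correct and follows essentially the same route as the paper's: decompose an element of $\M_{L\ltimes V}$ into its $L$--$L$, $L$--$V$, and $V$--$V$ pieces, note that the bracket condition splits into $\sum_i[x_i,y_i]=0$ in $L$ and $\sum_j x_j'v_j=0$ in $V$ (since $[L,L]\subseteq L$ and $LV\subseteq V$ meet trivially), and then apply the zpd hypothesis to the first piece, the zad hypothesis to the cross piece, and the abelianness of $V$ to the third. The only difference is cosmetic: the paper works in the tensor square and must dispose of the symmetric cross terms $\sum_i(y_i\otimes u_i+u_i\otimes y_i)$ via \eqref{xyyx}, whereas your wedge-product formulation kills those terms automatically.
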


\begin{proof} Take an element $$\sum\limits_{i} (x_i+u_i)\otimes (y_i+v_i)\in
(L\ltimes V)\otimes(L\ltimes V)$$  such that $$\sum\limits_i [x_i+u_i,y_i+v_i] =0.$$
Our goal is to show that this element lies in  ${\rm span}\,\mathcal{K}_{L\ltimes V}$.
We have $$\sum\limits_i ([x_i, y_i] +x_iv_i-y_iu_i ) =0,$$ which yields  $$\sum\limits_i [x_i, y_i]=0\quad\mbox{and}\quad \sum\limits_i (x_iv_i-y_iu_i )=0.$$ Since $L$ is a zpd Lie algebra and $V$ is a zad $L$-module, we have
 $$\sum\limits_i x_i\otimes y_i\in {\rm span}\,\mathcal{K}_L$$
and
 $$\sum\limits_i (x_i\otimes v_i-y_i\otimes u_i )\in{\rm span}\, \mathcal{K}_V.$$
 Clearly, $\mathcal{K}_L, \mathcal{K}_V\subseteq \mathcal{K}_{L\ltimes V}$. Since, by \eqref{xyyx}, $y_i\otimes u_i+ u_i\otimes y_i\in {\rm span}\,\mathcal{K}_{L\ltimes V}$,
and trivially $ u_i\otimes v_i\in \mathcal{K}_{L\ltimes V}$,
 it follows
 from $$\sum\limits_i (x_i+u_i)\otimes (y_i+v_i)=\sum\limits_i x_i\otimes y_i+ \sum\limits_i (x_i\otimes v_i-y_i\otimes u_i )+\sum\limits_i (y_i\otimes u_i+ u_i\otimes y_i)+\sum\limits_i  u_i\otimes v_i$$
that $\sum\limits_i (x_i+u_i)\otimes (y_i+v_i)\in {\rm span}\,\mathcal{K}_{L\ltimes V}$.
\end{proof}

The converse of the lemma is not true in general.

\begin{example}\label{exx}Let $\bb$ be the noncommutative Lie algebra with a basis $\{h,e\}$ and multiplication given by $[h,e]=e$. Let $A$ be any   commutative  associative unital algebra. Then the Lie algebra $\bb \otimes A=(h\otimes A)\ltimes (e\otimes A)$ is always zpd by Lemma \ref{current}. However, $e\otimes A$ is a zad $h\otimes A$-module if and only if $A$ is zpd as an associative algebra.
\end{example}

\begin{remark}
The definition of a zad module obviously makes sense for associative algebras, too. If $A$ is a unital associative zpd algebra, then every unital module $V$ over $A$ is automatically zad. Indeed, if $f:A\times V\to \F$ is a bilinear map satisfying \eqref{zad-1}, then for any fixed $v\in V$ the map $f_v:A\times A\to \F$, $f_v(x,y)=f(x,yv)$, satisfies the condition that $xy=0$ implies $f_v(x,y)=0$. Since $A$ is zpd it follows that $f_v(x,1)= f_v(1,x)$ for every $x\in A$. This means that $f(x,v) = f(1,xv)$ for all
$x\in A$, $v\in V$, showing that $V$ is zad. Example \ref{exx} shows that a module over a zpd Lie algebra may not be zad.
\end{remark}

Let us show that the converse of  Lemma \ref{lsd} does hold in the special case where $L$ has the property that elements in $L$ commute only when they are linearly dependent. A simple example is the Lie algebra $\sl_2$.

\begin{lemma}\label{semidirect}Let $L$ be a Lie algebra such that  any two commuting elements in $L$ are linearly dependent.
If  $V$ is an $L$-module,
then the Lie algebra $L \ltimes V$ is zpd if and only if $L $ is zpd and $V$ is zad.
\end{lemma}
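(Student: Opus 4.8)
The plan is as follows. One implication requires no hypothesis on $L$ and is precisely Lemma \ref{lsd}: if $L$ is zpd and $V$ is zad, then $L\ltimes V$ is zpd. So what remains is the converse. Assume throughout that $M:=L\ltimes V$ is zpd; I will show that $L$ is zpd and that $V$ is zad, and then invoke Lemma \ref{lsd} for the other direction.

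To see that $L$ is zpd I would apply Lemma \ref{lab}(a) to the ideal $V$ of $M$, for which $L\cong M/V$. Working in $M=L\oplus V$, for $x\in L$ and $u,v\in V$ one has $[x+u,v]=xv$, so $[M,V]=LV$; and $[x+u,y+v]=[x,y]+(xv-yu)$ shows $[M,M]=[L,L]\oplus LV$ (with $[L,L]\subseteq L$ and $LV\subseteq V$), hence $[M,M]\cap V=LV=[M,V]$. Lemma \ref{lab}(a) then gives that $M/V\cong L$ is zpd. (Alternatively: $L$ is a Lie-algebra retract of $M$ via the inclusion $x\mapsto x$ and the projection $x+v\mapsto x$, and a retract of a zpd Lie algebra is readily seen to be zpd.)

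The main point is that $V$ is zad, and here the hypothesis on $L$ is essential. Let $g\colon L\times V\to\F$ be bilinear with $g(x,v)=0$ whenever $xv=0$. I would extend $g$ to the bilinear map $f\colon M\times M\to\F$ given by
$$f(x+u,\ y+v)=g(x,v)-g(y,u),\qquad x,y\in L,\ u,v\in V,$$
which restricts to $g$ in the sense that $f(x,v)=g(x,v)$ for $x\in L$, $v\in V$. The claim to verify is that $f$ vanishes on commuting pairs of $M$. If $[x+u,y+v]=0$, then $[x,y]=0$ in $L$ and $xv-yu=0$ in $V$; by the hypothesis, $x$ and $y$ are linearly dependent. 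If $x=0$ then $yu=xv=0$, so $g(y,u)=0$ and $f(x+u,y+v)=g(0,v)-g(y,u)=0$; if $x\neq0$ then $y=\lambda x$ for some $\lambda\in\F$, and $x(v-\lambda u)=xv-yu=0$ yields $g(x,v-\lambda u)=0$, i.e.\ $g(x,v)=\lambda g(x,u)=g(y,u)$, so again $f(x+u,y+v)=0$. Since $M$ is zpd, there is a linear $\Psi\colon[M,M]\to\F$ with $f(a,b)=\Psi([a,b])$ for all $a,b\in M$. Because $[x,v]=xv\in[M,M]$ for $x\in L$, $v\in V$, and $LV\subseteq[M,M]$, the restriction $\Phi:=\Psi|_{LV}$ satisfies $g(x,v)=f(x,v)=\Psi(xv)=\Phi(xv)$ for all $x\in L$ and $v\in V$; thus $V$ is zad.

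I expect the one genuinely delicate step to be the verification that the extended map $f$ annihilates \emph{every} commuting pair of $M$: the computation succeeds precisely because ``$[x,y]=0$ in $L$'' can be upgraded to ``$x$ and $y$ are proportional'', which is exactly the standing hypothesis on $L$, and without which the converse can genuinely fail (as Example \ref{exx} reflects). Everything else is routine bookkeeping with the decomposition $M=L\oplus V$.
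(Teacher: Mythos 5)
Your proposal is correct and follows essentially the same route as the paper: sufficiency via Lemma \ref{lsd}, the zpd property of $L$ via Lemma \ref{lab}(a) applied to the ideal $V$, and the zad property of $V$ via the same skew extension $F(x+u,y+v)=g(x,v)-g(y,u)$ with the identical case split on $x=0$ versus $x\neq 0$ using proportionality of commuting elements. The only (welcome) addition is that you explicitly verify the hypothesis $[M,V]=[M,M]\cap V$ of Lemma \ref{lab}(a), which the paper takes for granted.
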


\begin{proof}
The sufficiency follows from Lemma \ref{lsd}. Now suppose $L \ltimes V$ is zpd.  From Lemma \ref{lab}\,(a) we see that  $L =(L \ltimes V)/V$ is  zpd.
Next we show that $V$ is a zad  $L $-module.

Take any bilinear map $f: L \times V\to \F$ with property \eqref{zad-1}. We can extend $f$ to a bilinear
map $F: (L \ltimes V)\times (L \ltimes V)\to \F$ as follows
$$F(x+u, y+v)=f(x,v)-f(y,u),\ \forall\ x,y\in L  , u,v\in V.$$
First we show that $F$ also satisfies  \eqref{zad-1}.

Take any $x,y\in L  , u,v\in V$ with $[x+u, y+v]=0$.
If $x=0$, then we have $$[x+u, y+v]=-yu=0.$$ Consequently, $f(y,u)=0$, which implies $F(x+u,y+v)=0$ in this case.
Now suppose $x\neq0$. Then $[x+u, y+v]=[x,y]+xv-yu=0$ implies $[x,y]=0$ and $xv-yu=0$.
By the assumption, there exists $a\in\F$ such that $y=ax$. Consequently, $xv-yu=x(v-au)=0$.
Hence $F(x+u,y+v)=f(x,v)-f(y,u)=f(x,v-au)=0$ by the assumption on $f$.

Since we have assume that $L \ltimes V$ is zpd, there is a linear map $\Psi: L \ltimes V\rightarrow \F$
such that $$F(x+u,y+v)=\Psi([x+u,y+v]),\,\,\, \forall \,\,\, x,y\in L  , u,v\in V.$$
In particular, $$f(x,v)=F(x+0,0+v)=\Psi(xv), \,\,\, \forall \,\,\, x\in L  , v\in V.$$ Let $\Phi$ be the restriction of $\Psi$ on $LV$.
Then \eqref{zad-2} holds and $V$ is a zad  $L $-module.
\end{proof}

\section{Lie algebras of low dimensions}\label{s3}

In this section we will prove the following

\begin{proposition} All Lie algebras over any field $\F$ of dimension $\le 3$ are zpd.
\end{proposition}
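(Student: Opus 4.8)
The plan is to use the characterization from Lemma~\ref{Wedge}: it suffices to show, for each Lie algebra $L$ with $\dim L \le 3$, that $\operatorname{span}\K'_L = \M'_L$ inside $L\wedge L$. Since $\dim(L\wedge L) \le 3$ in all cases, and $\M'_L$ is cut out by the linear condition $\sum_i[x_i,y_i]=0$, this is a low-dimensional bookkeeping problem. I would organize the proof according to $\dim[L,L]$, which takes the values $0,1,2,3$. When $[L,L]=0$ (abelian case) everything commutes, so $\K'_L$ already spans all of $L\wedge L$ and there is nothing to prove. When $\dim[L,L] = \dim(L\wedge L)$ (which forces $\dim L \le 3$ to have $\dim[L,L]\in\{1,3\}$ realized with $\dim L\wedge L$ equal to it, e.g. $\sl_2$ or $\mathfrak{sl}_2$-like cases, and the $3$-dimensional case with $\dim L\wedge L=3=\dim[L,L]$), part (c) of Lemma~\ref{Wedge} applies directly. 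So the only genuine work is in the intermediate cases: $\dim L = 2$ with $\dim[L,L]=1$, and $\dim L = 3$ with $\dim[L,L]\in\{1,2\}$.

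For $\dim L = 2$: either $L$ is abelian (done), or $[L,L]$ is $1$-dimensional, and then $L$ has a basis $\{h,e\}$ with $[h,e]=e$ (the non-abelian $2$-dimensional Lie algebra). Here $L\wedge L$ is $1$-dimensional, spanned by $h\wedge e$, and $\M'_L$ is all of it since the map $L\wedge L \to [L,L]$ is an isomorphism; again Lemma~\ref{Wedge}(c) closes it, but one may alternatively just note $h\wedge e$ pairs to $e\ne 0$. For $\dim L = 3$ with $\dim[L,L]=2$: then $\dim(L\wedge L)=3$ and $\dim\M'_L = 1$, so I must exhibit a single nonzero element of $\M'_L$ that lies in $\operatorname{span}\K'_L$. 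The idea is to locate a central element or, more generally, a vector $z$ commuting with some independent $x$ (so $z\wedge x\in\K'_L$) and argue this already generates $\M'_L$; a short case analysis over the possible $3$-dimensional Lie algebras with $2$-dimensional derived algebra (these are classified — e.g. the algebras $\mathfrak{r}_{3}(\lambda)$, $\mathfrak{r}_3$, and the Heisenberg-by-one-dimensional extensions) makes this routine.

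The main case, and the one I expect to be the real obstacle, is $\dim L = 3$ with $\dim[L,L]=1$. Then $\dim(L\wedge L) = 3$ and $\dim \M'_L = 9 - 1$... no: working in $L\wedge L$, $\dim\M'_L = \dim(L\wedge L) - \dim[L,L] = 3 - 1 = 2$. So I need to show a $2$-dimensional subspace of the $3$-dimensional space $L\wedge L$ is spanned by wedges of commuting pairs. The relevant algebras are the Heisenberg algebra $\mathfrak{h}_3$ (handled in general in Section~\ref{s5}, but I can do it by hand here) and the algebras of the form (abelian)$\,\oplus\,$(non-abelian $2$-dim), plus $\sl_2$-type nilpotent variants. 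Concretely, for the Heisenberg algebra with basis $\{x,y,z\}$, $[x,y]=z$ central: the pairs $x\wedge z$, $y\wedge z$ are in $\K'_L$ and span exactly the kernel of $L\wedge L\to[L,L]$, so $\operatorname{span}\K'_L = \M'_L$. For the decomposable case $\F z \oplus \langle h,e\rangle$ with $[h,e]=e$: $z$ is central, so $z\wedge h, z\wedge e\in\K'_L$, and these span $\M'_L$. So in each subcase one identifies two explicit commuting wedges spanning the $2$-dimensional $\M'_L$.

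The honest summary of the plan: reduce via Lemma~\ref{Wedge}(c) to handle all cases where $\dim[L,L]=\dim(L\wedge L)$ and the abelian case for free, then for the remaining finitely many isomorphism types (all of low dimension, and the $3$-dimensional ones available from the standard classification of $3$-dimensional Lie algebras) write down by hand the one- or two-dimensional space $\M'_L$ and exhibit commuting wedges spanning it. The only thing requiring care is making the case division over $3$-dimensional Lie algebras exhaustive; everything after that is a couple of lines each. I would therefore present the proof as: (1) abelian case and the $\dim[L,L]=\dim(L\wedge L)$ case via Lemma~\ref{Wedge}; (2) $\dim L=2$; (3) $\dim L=3$, $\dim[L,L]=2$; (4) $\dim L=3$, $\dim[L,L]=1$, with the last split into the decomposable and Heisenberg(-type) subcases.
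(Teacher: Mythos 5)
Your plan is essentially the paper's own proof: both reduce the problem to the wedge-product criteria of Lemma~\ref{Wedge} and then run a case analysis on $\dim[L,L]$, exhibiting explicit commuting wedges that span $\M'_L$ in the nontrivial cases (the paper additionally splits off decomposable algebras via the direct-sum stability of the zpd property, but the content is the same). The only point to tighten is the case $\dim L=3$, $\dim[L,L]=2$: the classification you invoke ($\mathfrak{r}_3(\lambda)$, etc.) is the real/complex one while the proposition is over an arbitrary field $\F$ --- but no classification is needed there, since the Jacobi identity forces a two-dimensional derived subalgebra of a three-dimensional Lie algebra to be abelian, which immediately yields a linearly independent commuting pair and hence a nonzero element of the one-dimensional space $\M'_L$ lying in $\K'_L$.
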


\begin{proof}  From Lemma \ref{Wedge} we see that all one or two dimensional Lie algebras are zpd. Now suppose $L$ is a
Lie algebra over any field $\F$ of dimension $3$. If it is a direct sum of two nonzero ideals which have to be
one or two dimensional Lie algebras, it follows that $L$  is zpd. Now we assume that $L$ cannot be a direct sum of two nonzero ideals.

{\it Case 1}: $[L, L]=L$.

It is easy to see that $\dim L\wedge L=3$. Then  the linear map $$L\wedge L\to [L,L],\,\,\, x\wedge y\mapsto [x,y],$$ is a linear isomorphism.  From Lemma \ref{Wedge} (b) we know that
$L$ is zpd in this case.

{\it Case 2}: $\dim [L, L]=2$.

Let $[L,L]=\F x\oplus\F y$ and $x,y,z$ form a basis for $L$. If $[x,y]=0$  it follows that
$$\dim\Big((L\wedge L)/\span\,\{u\wedge v | u, v\in L, [u,v]=0\}\Big )=\dim [L,L].$$
 From Lemma \ref{Wedge} (d) we see that $L$ is zpd. Next we may assume that $[x,y]=y$.

 If $[z, \F x+\F y]= \F x$, there is a nonzero $cx+dy$ such that $[z, cx+dy]=0$.

 If  $[z, \F x+\F y]\ne \F x$,
 then $[z,ax+by]=y$ for some $a, b\in\F$. If $b\ne 0$ we know that $[z-b^{-1}x, ax+by]=0$, if $b=0$  it follows that $[z+ax, y]=0$.

For all cases we have
$$\dim\Big((L\wedge L)/\span\,\{u\wedge v | u, v\in L, [u,v]=0\}\Big )=\dim [L,L].$$
 From Lemma \ref{Wedge} (d) we  see that $L$ is zpd.

{\it Case 3}: $\dim [L, L]=1$.

Let $[L,L]=\F w$ and $u, v, w$ form a basis for $L$. We may assume that
$$[u, v]=aw,\,\,\,[u,w]=bw,\,\,\,[v,w]=cw,$$ for some $a,b,c\in\F$.

If $b=c=0$ we see that $L$ is the $3$-dimensional Heisenberg algebra which is zpd. This can be easily checked; on the other hand, we will prove this in Section \ref{s5}.

Now we assume that $b\ne0$ or $c\ne0$. By symmetry we may assume that $b\ne0$. By re-choosing $v$  we have
$$[u, v]=0,\,\,\,[u,w]=bw,\,\,\,[v,w]=cw,$$
By re-choosing $v$  again, we have
$$[u, v]=0,\,\,\,[u,w]=bw,\,\,\,[v,w]=0.$$
We see that $L$ is a direct sum of two ideals $\F v$ and span$\{u, w\}$, which contradicts the assumption on $L$.
Thus $L$ is zpd in this case. This completes the proof.
\end{proof}

From this proposition we know that the lowest dimension of a non-zpd Lie algebra is possibly $4$. In Section 6 we will construct such examples.

\section{Lie algebras in which only linearly dependent elements commute}\label{s4}

The purpose of this section is to describe  zpd Lie algebras $L$ such that any two commuting elements in $L$ are linearly dependent. We first need a technical lemma:

\begin{lemma}\label{subalg-proportional}Suppose $L $ is a zpd Lie algebra  such that  any two commuting elements in $L$ are linearly dependent. Then any subspace of $L $ with dimension $\geq3$ is a subalgebra of $L $.
\end{lemma}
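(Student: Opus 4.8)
The goal is to show that under the hypothesis (only linearly dependent elements commute, and $L$ is zpd), every subspace $W \subseteq L$ with $\dim W \geq 3$ is a subalgebra, i.e. $[W,W] \subseteq W$. The natural strategy is a proof by contradiction: assume there exist $a, b \in W$ with $[a,b] \notin W$, and use the zpd property to produce an impossible linear functional. I would build a bilinear form $f: L \times L \to \F$ that vanishes on commuting pairs but is not a coboundary, contradicting zpd via Lemma \ref{lemkm} (or rather its wedge version, Lemma \ref{Wedge}(a)): it suffices to exhibit an element of $\M'_L$ that is not in $\span\, \K'_L$.

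\textbf{Key steps.} First, pick a supplement: write $L = W \oplus U$ as vector spaces. The candidate "bad" tensor should be $a \wedge b$ where $a, b \in W$ are chosen so that $[a,b]$ has nonzero $U$-component; I would like to arrange, after modifying $a, b$ within $W$, that in fact $[a,b] \in U \setminus \{0\}$ — but this may not be directly possible, so more likely I keep the general $[a,b] = w_0 + u_0$ with $u_0 \neq 0$. The crucial point is that $a \wedge b \in \span\, \K'_L$ would mean $a \wedge b = \sum_i c_i\, x_i \wedge y_i$ with each $[x_i, y_i] = 0$, hence (by the hypothesis) each $x_i \wedge y_i = 0$ in $L \wedge L$ — so $a \wedge b = 0$, i.e. $a, b$ linearly dependent, which again forces $[a,b] = 0 \in W$, a contradiction. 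So \emph{no} nonzero wedge of two linearly independent vectors lies in $\span\, \K'_L$; in other words $\span\, \K'_L = \{0\}$ under this hypothesis. Therefore $\M'_L = \{0\}$ as well, which says the map $L \wedge L \to [L,L]$, $x \wedge y \mapsto [x,y]$, is \emph{injective}. Now the contradiction: choose a third vector $c \in W$ with $a, b, c$ linearly independent (possible since $\dim W \geq 3$), and consider whether $[a,c]$ or $[b,c]$ lands outside $W$. The injectivity of $L \wedge L \to [L,L]$ combined with $[a,b] \notin W$ should let me cook up two distinct wedges with the same bracket — for instance, if $[a,b] = w_0 + u_0$ and I can find $a', b' \in L$ with $a' \wedge b' \neq a \wedge b$ but $[a',b'] = [a,b]$, contradiction. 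The place to find such a collision: use that $\dim W \geq 3$ gives room inside $W$, together with the fact that the bracket's $W$-component and the ambient Lie structure constrain things — I expect to play $a \wedge b$ against $(a + \lambda c) \wedge b$ or similar for a scalar $\lambda$ chosen to fix up the bracket.

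\textbf{Main obstacle.} The hard part is the last step: manufacturing the actual bracket collision that contradicts injectivity of $L\wedge L \to [L,L]$. The cleanest route may be dimension counting rather than an explicit collision. Since $\span\,\K'_L = 0$ (shown above), zpd forces $\M'_L = 0$, i.e. $\dim(L\wedge L) = \dim[L,L] \leq \dim L$ when $L$ is finite-dimensional; but for a general (possibly infinite-dimensional) $L$ this inequality is automatic-looking yet still constrains matters. Actually the sharper consequence is just \emph{injectivity}, valid regardless of dimension. So I would argue: if $W$ is not a subalgebra, pick $a,b,c \in W$ linearly independent with, say, $[a,b] \notin W$. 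Consider the three brackets $[a,b], [a,c], [b,c]$ and the Jacobi identity $[[a,b],c] + [[c,a],b] + [[b,c],a] = 0$; combined with injectivity (which forbids any nontrivial linear relation $\sum c_i\, x_i \wedge y_i \in \M'_L$ with not all $x_i \wedge y_i$ zero) I should derive that some two-vector wedge built from $a, b, c$ and their brackets maps to zero without being zero. Pinning down exactly which combination works, and checking it genuinely produces a nonzero element of $\M'_L$ (rather than a triviality), is the delicate computation I would need to carry out carefully; I anticipate it hinges on the observation that $a \wedge b$, $a \wedge c$, $b \wedge c$ together with their images being forced into a $\leq 3$-dimensional space inside $[L,L]$ while spanning a $3$-dimensional space in $L \wedge L$, unless $[W,W]\subseteq W$ fails in a way that still keeps injectivity — a tension that should collapse to a contradiction.
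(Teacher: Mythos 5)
You have the right engine: since commuting elements are linearly dependent, every $x\wedge y\in\K'_L$ vanishes, so $\span\,\K'_L=0$, and zpd (Lemma \ref{Wedge}(a)) then forces $\M'_L=0$, i.e.\ the map $\pi\colon L\wedge L\to[L,L]$, $x\wedge y\mapsto[x,y]$, is injective. You also correctly identify the Jacobi identity as the way to exploit this: for any $x,y,z$ the element $[x,y]\wedge z+[z,x]\wedge y+[y,z]\wedge x$ lies in $\ker\pi$, hence equals $0$ in $L\wedge L$. This is essentially the paper's argument, which works in $L\otimes L$ instead: there $\span\,\K_L$ is the span of the symmetric tensors $u\otimes u$, so the Jacobi element must be fixed by the flip, giving $[x,y]\otimes z+[z,x]\otimes y+[y,z]\otimes x=z\otimes[x,y]+y\otimes[z,x]+x\otimes[y,z]$ --- the same information in tensor form.

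The genuine gap is that you never execute the step that actually proves the lemma: deducing from the vanishing of $[x,y]\wedge z+[z,x]\wedge y+[y,z]\wedge x$ (for linearly independent $x,y,z$) that $[x,y]\in\spn\{x,y,z\}$. You label this ``the delicate computation I would need to carry out carefully'' and instead speculate about several alternative mechanisms (a ``bracket collision'', a dimension count, a ``tension that should collapse''), none of which is pinned down; the contradiction framing with $[a,b]\notin W$ never closes. The missing step is in fact a short direct computation, not a contradiction: extend $x,y,z$ to a basis of $L$ and write $[x,y]=\alpha x+\beta y+\gamma z+w$ with $w$ in the span of the remaining basis vectors $e_i$; in the expansion of the Jacobi wedge element the basis vectors $e_i\wedge z$ occur only through $[x,y]\wedge z$, so $w=0$. (The paper outsources exactly this to \cite[Lemma 4.11]{INCA}.) Once that is in place, your concluding step --- for $a,b$ in a subspace $W$ with $\dim W\geq 3$ and $[a,b]\neq0$, pick $c\in W$ making $a,b,c$ independent and conclude $[a,b]\in\spn\{a,b,c\}\subseteq W$ --- is exactly the paper's finish. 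So the skeleton is sound, but the proposal as written stops short of the lemma's actual content.
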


\begin{proof} Since $L $ is zpd,
we see that every element in $L\otimes L$ of the form
$$[x,y]\otimes z + [z,x]\otimes y + [y,z]\otimes x$$
can be written as a linear combination of elements of the form $u\otimes u$, $u\in L$. Since  elements of the latter  form are fixed points of the linear transformation of $L\otimes L$: $r\otimes s\mapsto s\otimes r$, the same should be true for elements of the former form. Accordingly,
$$[x,y]\otimes z + [z,x]\otimes y + [y,z]\otimes x = z\otimes [x,y] + y\otimes [z,x] + x\otimes [y,z]$$
for all $x,y,z\in L$. 
If $x,y,z$ are linearly independent,  it follows that $[x,y]$ lies in the linear span of $x,y,z$ (see, e.g., \cite[Lemma 4.11]{INCA}) 
and hence the space spanned by $x,y,z$ is a subalgebra of $L $.

Now take any subspace $L '\subseteq L $ with $\dim L '\geq3$. For any $x,y\in L  '$ such that
$[x,y]\neq0$,  there exists $z\in L  '$ such that $x,y,z$ are linearly independent. By the previous
argument, we see that the subspace spanned by $x,y,z$ is a subalgebra of $L $. Hence $[x,y]\in\spn\{x,y,z\}\subseteq L '$, which is a subalgebra of $L $ as desired.
\end{proof} 

\begin{theorem}\label{proportionalZPD}
Let $L $ be a zpd Lie algebra such that any two commuting elements in $L$ are linearly dependent. Then $L $ is isomorphic to  a
$3$-dimensional simple Lie algebra, or the $2$-dimensional noncommutative Lie algebra, or
the $1$-dimensional Lie algebra.
\end{theorem}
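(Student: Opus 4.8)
The plan is to first deduce $\dim L\leq 3$ from Lemma \ref{subalg-proportional}, and then to treat the three surviving dimensions separately.

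\emph{Step 1: $\dim L\leq 3$.} Suppose $\dim L\geq 4$. Given linearly independent $x,y\in L$, extend $\{x,y\}$ to a basis and choose two basis vectors $z\neq z'$ lying outside $\span\{x,y\}$; then $\span\{x,y,z\}$ and $\span\{x,y,z'\}$ are distinct $3$-dimensional subspaces, hence subalgebras by Lemma \ref{subalg-proportional}, so $[x,y]$ lies in their intersection $\span\{x,y\}$. Thus $[x,y]\in\span\{x,y\}$ for all $x,y\in L$ (the linearly dependent case being trivial). A short bilinearity argument then produces a linear functional $\lambda$ on $L$ with $[x,y]=\lambda(x)y-\lambda(y)x$ for all $x,y$: writing $[x,y]=\alpha(x,y)x+\beta(x,y)y$ for independent $x,y$ and comparing coefficients in $[x,y+y']=[x,y]+[x,y']$ shows that $\beta(x,y)$ does not depend on $y$, say $\beta(x,y)=\lambda(x)$; antisymmetry gives $\alpha(x,y)=-\lambda(y)$; and expanding $[x+x',y]$ shows $\lambda$ is additive (it is clearly homogeneous). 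Since $\ker\lambda$ now has codimension at most $1$, hence dimension $\geq 3$, it contains two linearly independent elements, and these commute --- contradicting the hypothesis. Therefore $\dim L\leq 3$.

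\emph{Step 2: the three cases.} If $\dim L=1$, then $L$ is the $1$-dimensional Lie algebra. If $\dim L=2$, then $L$ cannot be abelian (its two basis vectors would commute while being independent), so $L$ is the $2$-dimensional nonabelian Lie algebra. Now let $\dim L=3$; I will show $L$ is simple. First I claim $L$ is perfect. If $\dim[L,L]\leq 1$, then $L$ is not abelian, so $[L,L]=\F w$ is a $1$-dimensional ideal; then $\ad w$ has rank at most $1$, so $\ker(\ad w)$ is at least $2$-dimensional and properly contains $\F w$, whence some $u\in\ker(\ad w)\setminus\F w$ commutes with $w$ while being independent of it --- impossible. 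If $\dim[L,L]=2$, then $M:=[L,L]$ is a $2$-dimensional subalgebra, necessarily nonabelian by the same reasoning; for the $2$-dimensional nonabelian Lie algebra one checks over any field that $\Der(M)=\operatorname{Inn}(M)$ (both of dimension $2$). Since $[z,M]\subseteq[L,L]=M$ for every $z\in L$, the restriction $\ad z|_M$ is a derivation of $M$, hence equals $\ad m|_M$ for some $m\in M$; then $z-m\neq 0$ commutes with all of $M$, and paired with any nonzero element of $M$ it yields another forbidden commuting pair --- impossible. Hence $\dim[L,L]=3$, i.e. $L$ is perfect. Finally, $L$ has no proper nonzero ideal $I$: if $\dim I=2$, then $L/I$ is abelian, forcing $[L,L]\subseteq I\subsetneq L$ and contradicting perfectness; if $\dim I=1$, the argument of the $\dim[L,L]=1$ case (an ideal $\F w$ with $\ad w$ of rank $\leq 1$) once more yields a commuting independent pair. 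Thus $L$ is a $3$-dimensional simple Lie algebra.

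\emph{Where the difficulty lies.} The heart of the matter is Step 1: one has to squeeze out of Lemma \ref{subalg-proportional} the rigid conclusion that $[x,y]\in\span\{x,y\}$ for all $x,y$ when $\dim L\geq 4$, and then recognise that a Lie bracket of this shape is necessarily of the special form $\lambda(x)y-\lambda(y)x$ and hence has many commuting independent pairs. In Step 2 the only nonroutine point is the subcase $\dim[L,L]=2$, which is settled cleanly by the equality of derivations and inner derivations of the $2$-dimensional nonabelian Lie algebra, valid over an arbitrary field.
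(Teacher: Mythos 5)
Your proof is correct, but it takes a genuinely different route from the paper's in both halves. For dimension $\geq 4$ the paper argues in two separate ways: in finite dimensions it observes that the hypotheses force the canonical map $L\wedge L\to[L,L]$ to be an isomorphism, so $\binom{n}{2}=\dim[L,L]\leq n$ and hence $n\leq 3$; in infinite dimensions it invokes Lemma \ref{subalg-proportional} to produce a $4$-dimensional subalgebra and rules it out by a normal-form computation (reducing to $[x,y]=y$, $[x,u]=u$ and an eigenvalue argument for $\ad x$), which even needs a small characteristic-$2$ caveat at the end. You instead treat all of $\dim L\geq 4$ uniformly: two overlapping $3$-dimensional subalgebras supplied by Lemma \ref{subalg-proportional} pin $[x,y]$ into $\span\{x,y\}$, the classical rigidity argument upgrades this to $[x,y]=\lambda(x)y-\lambda(y)x$ for a linear functional $\lambda$, and the abelian subspace $\ker\lambda$ of dimension $\geq 3$ contradicts the hypothesis; this is shorter than the paper's infinite-dimensional computation, characteristic-free, and makes the finite/infinite split unnecessary. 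In dimension $3$ the paper uses the zpd hypothesis once more (via $\dim[L,L]=\dim L\wedge L=3$) to get perfectness and then simplicity by the same ideal-dimension count you use; your derivation-theoretic proof of perfectness (using $\Der(M)=\operatorname{Inn}(M)$ for the $2$-dimensional nonabelian algebra, valid over any field) dispenses with zpd entirely there, so you in fact prove the slightly stronger statement that every $3$-dimensional Lie algebra in which commuting elements are linearly dependent is simple. The trade-off is that the paper's $\binom{n}{2}\leq n$ count is the quicker argument where it applies, while your approach buys uniformity, avoids the separate infinite-dimensional analysis, and isolates exactly where Lemma \ref{subalg-proportional} is really needed.
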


\begin{proof}
 By the assumption on $L $, we see that any bilinear map $f$ on $L $ satisfying  the property \eqref{e1} is equivalent to a linear map on $L \wedge L $. Hence $L $ is zpd if and only if   the canonical
map $$\pi: L \wedge L \rightarrow [L ,L ],\,\,\, x\wedge y\mapsto [x,y],$$
is a vector space isomorphism.

If $L $ is finite dimensional, we have $\dim L \wedge L =\dim[L ,L ]$, which holds only if $\dim L \leq3$.

If $\dim L =2$, since $L$ cannot be commutative,  it has to be the noncommutative $2$-dimensional Lie algebra.

Now suppose $\dim L =3$. Then $\dim[L ,L ]=\dim L \wedge L =3$. Suppose $L$ is not simple and let $\II$ be a nonzero proper ideal of $L $.
If $\dim\II=1$, there exists $x,y\in L  $ such that $L =\II\oplus\F x\oplus\F y$, which implies
$[L ,L ]\subseteq\spn\{[x,y],\II\}$ and $\dim[L ,L ]\leq2$, contradiction.
If $\dim\II=2$, there exists $x\in L  $ such that $L =\II\oplus\F x$, which implies $[L ,L ]\subseteq\II$
and $\dim[L ,L ]\leq\dim\II$, contradiction. So $L$ is a $3$-dimensional simple Lie algebra.

Next suppose that $L $ is infinite dimensional. 
By Lemma \ref{subalg-proportional}, there is a $4$-dimensional subalgebra $L'\subset L$. We will prove that this is impossible.

Clearly, any two commutative elements of $L '\subseteq L $ are also linearly dependent. For any linearly independent elements $x,y, u, v\in L'$,
since $\{x,y,u\}$ spans a $3$-dimensional Lie algebra, and the same is true for $\{x,y,v\}$, we may assume that
$$[x,y]=a_1x+b_1y+c_1u, \,\,\, [x,y]=a_2x+b_2y+c_2v,$$
for some $a_i, b_i,c_i\in \F$. We deduce that $c_1=c_2=0$. Thus, any two elements in $L$  span a subalgebra.

Since $\{x,y\}$ can span a $2$-dimensional Lie algebra, and  $\{x,u\}$  and $\{x,v\}$ can also, we may assume that
$$[x,y]=ax+by, \,\,\, [x,u]=a'x+b'u, \,\,\, [x,v]=a''x+b''v.$$
Considering the subalgebras spanned by  $\{x,y+u\}$  and $\{x,u+v\}$, we deduce that $b=b'=b''\ne0$.
By rechoosing $x, y$ and  $u$ we may assume that
$$[x,y]=y, \,\,\, [x,u]=u.$$

Since $x,y,u$ form a basis of the Lie subalgebra $L''=$span$\{x,y,u\}$, we see that the only eigenvalues of ad$(x)$ on $L''$ are $0$ and $1$.
From
$$[x,[y,u]]=[[x,y],u]+[y, [x, u]]=2[y,u],$$
we obtain that $[y,u]=0$, or $[y,u]\in \F x$ if char$(\F)=2$,    contradicting the assumptions on $L$.
This completes the proof.
\end{proof}

\begin{corollary}
An infinite dimensional Lie algebra in which  any two commuting elements are linearly dependent is not zpd.
\end{corollary}

Let us point out two special cases of the corollary.

\begin{corollary}
A free Lie algebra in at least two variables is not zpd.
\end{corollary}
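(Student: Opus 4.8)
The plan is to derive this from the preceding corollary, so I would verify its two hypotheses for a free Lie algebra $F$ on a set $X$ with $|X|\geq 2$: that $F$ is infinite-dimensional, and that any two commuting elements of $F$ are linearly dependent.

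Infinite-dimensionality I would dispatch quickly --- for instance via Witt's dimension formula for the homogeneous components of $F$, or by embedding $F$ into its universal enveloping algebra $\F\langle X\rangle$ (the free associative algebra on $X$) through the Poincar\'e--Birkhoff--Witt theorem and observing that the brackets $(\ad x)^n(y)$, for distinct $x,y\in X$ and $n\geq 0$, map to nonzero elements of $\F\langle X\rangle$ lying in pairwise distinct homogeneous components, hence are linearly independent.

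The substantive point is the statement about commuting elements, and here I would argue by contradiction using the Shirshov--Witt theorem. Suppose $a,b\in F$ commute but are linearly independent; then, since $[a,b]=0$, the subspace $M:=\F a+\F b$ is a $2$-dimensional abelian subalgebra of $F$. By the Shirshov--Witt theorem every subalgebra of a free Lie algebra over a field is again free, so $M$ would be a free Lie algebra of dimension $2$. But no free Lie algebra has dimension $2$: one on no generators is $\{0\}$, one on a single generator is $1$-dimensional, and one on a generating set with at least two elements $e_1,e_2$ already contains the three linearly independent elements $e_1,e_2,[e_1,e_2]$. This contradiction forces $a$ and $b$ to be linearly dependent, and the preceding corollary then yields that $F$ is not zpd.

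The only place where nontrivial input is imported is the appeal to the Shirshov--Witt theorem; that is precisely what rules out a $2$-dimensional abelian subalgebra, and it has the merit of being valid over an arbitrary field, so no hypothesis on the characteristic of $\F$ is needed. (If one preferred, one could instead quote the classical description of centralizers in free Lie algebras --- the centralizer of any nonzero element is one-dimensional --- but going through Shirshov--Witt is the most economical route.)
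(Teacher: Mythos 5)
Your proposal is correct and follows the same route as the paper, which simply states this as a special case of the preceding corollary on infinite-dimensional Lie algebras in which only linearly dependent elements commute; you have merely supplied the verification of the two hypotheses (infinite-dimensionality and the commuting-implies-dependent property via Shirshov--Witt) that the paper leaves implicit. Both steps are sound.
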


\begin{corollary}
The first Witt algebra $W_1$ is not zpd. Moreover, the same is true for every infinite dimensional subalgebra of $W_1$.
\end{corollary}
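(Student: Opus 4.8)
The plan is to reduce everything to the preceding corollary by proving that \emph{any two commuting elements of $W_1$ are linearly dependent}. Since this property is clearly inherited by every subalgebra, and since $W_1$ as well as each of its infinite dimensional subalgebras is infinite dimensional, the corollary then applies verbatim and yields both assertions at once.

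To verify the claim, realize $W_1$ as the Lie algebra of derivations $\mathbb{C}[t,t^{-1}]\,\partial$, where $\partial=d/dt$, so that a general element has the form $f\partial$ with $f\in\mathbb{C}[t,t^{-1}]$ and the bracket is $[f\partial,g\partial]=(fg'-f'g)\,\partial$. Suppose $f\partial$ and $g\partial$ commute and that neither is zero; then $fg'-f'g=0$. Passing to the field of rational functions $\mathbb{C}(t)$, the element $h:=g/f$ satisfies $h'=(g'f-gf')/f^{2}=0$, hence $h$ is a constant $c\in\mathbb{C}$ (this is where characteristic $0$ is used), so $g=cf$ and therefore $g\partial=c\,f\partial$. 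Thus $f\partial$ and $g\partial$ are linearly dependent. (If one prefers the model $W_1=\mathrm{Der}(\mathbb{C}[t])$, with basis $L_{-1},L_0,L_1,\dots$, the same computation applies with $f,g\in\mathbb{C}[t]$.)

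Consequently any two commuting elements of $W_1$, and a fortiori of any subalgebra $S\subseteq W_1$, are linearly dependent. If $S$ is infinite dimensional, the previous corollary shows that $S$ is not zpd; taking $S=W_1$ gives the first statement. There is essentially no serious obstacle here: the only points needing care are the explicit choice of model for $W_1$ and the use of characteristic $0$ to pass from $h'=0$ to $h$ constant — in positive characteristic the Witt algebra is finite dimensional, so the hypothesis ``infinite dimensional'' is not in force.
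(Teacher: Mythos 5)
Your argument is correct and is exactly the route the paper intends: it states this as a special case of the preceding corollary, leaving implicit the verification that in $W_1$ (hence in any subalgebra) two commuting elements $f\partial$ and $g\partial$ must be proportional, which you supply via $(g/f)'=0$ in $\mathbb{C}(t)$. Nothing further is needed.
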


The fact that $W_1$ is not zpd was already noticed in \cite{Gl}.

\section{Heisenberg Lie algebras}\label{s5}

Recall that a Heisenberg algebra $\h$ is a Lie algebra with basis $\{c, x_i, x_{-i}\,|\,i\in I\}$ where $I$ is a nonempty finite or infinite set, such that
$$[x_i, x_{-i}]=c, \forall i\in I, $$ and all other brackets of basis elements are $0$. See \cite{KR}.

In this section we will prove the following

\begin{proposition} All Heisenberg algebras over any field $\F$  are zpd.
\end{proposition}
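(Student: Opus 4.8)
The plan is to use Lemma~\ref{lemkm} and show directly that $\mathrm{span}\,\K_\h = \M_\h$, where $\h$ is a Heisenberg algebra with basis $\{c, x_i, x_{-i}\mid i\in I\}$. Since $\mathrm{span}\,\K_\h\subseteq\M_\h$ always holds, only the reverse inclusion needs work. I will first assemble a convenient generating set for $\mathrm{span}\,\K_\h$: by \eqref{xyyx}, every symmetrized tensor $u\otimes v + v\otimes u$ lies in $\mathrm{span}\,\K_\h$; moreover, since $c$ is central, $c\otimes v$ and $v\otimes c$ lie in $\K_\h$ for every $v\in\h$, and $x_i\otimes x_j \in\K_\h$ whenever $j\neq -i$ (including $j=i$). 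Thus, modulo $\mathrm{span}\,\K_\h$, every basis tensor $a\otimes b$ with $a,b$ basis elements is congruent to $0$ except possibly the ``antisymmetric diagonal'' tensors $x_i\otimes x_{-i}$, and among those, $x_i\otimes x_{-i}\equiv -\,x_{-i}\otimes x_i$. So $L\otimes L/\mathrm{span}\,\K_\h$ is spanned by the images of $x_i\otimes x_{-i}$, $i\in I$ (picking one sign for each unordered pair).

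Next I would analyze $\M_\h$. An element $\sum_k a_k\otimes b_k\in L\otimes L$, expanded in the basis, can be written uniquely as a linear combination of basis tensors; reducing modulo $\mathrm{span}\,\K_\h$ as above collapses it to $\sum_i \gamma_i\, (x_i\otimes x_{-i})$ for scalars $\gamma_i$ (almost all zero), where $\gamma_i$ is the coefficient of $x_i\otimes x_{-i}$ minus the coefficient of $x_{-i}\otimes x_i$ in the original expansion. On the other hand, the bracket map sends $x_i\otimes x_{-i}\mapsto c$ and $x_{-i}\otimes x_i\mapsto -c$, while all other basis tensors map into $0$ (note $[\h,\h]=\F c$). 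Hence for $\sum_k a_k\otimes b_k$, the image under the bracket map is $\bigl(\sum_i \gamma_i\bigr)c$. Therefore $\sum_k a_k\otimes b_k\in\M_\h$ if and only if $\sum_i\gamma_i = 0$.

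It remains to check that any such element lies in $\mathrm{span}\,\K_\h$. Since $\sum_k a_k\otimes b_k \equiv \sum_i\gamma_i(x_i\otimes x_{-i})\pmod{\mathrm{span}\,\K_\h}$, it suffices to show $\sum_i\gamma_i(x_i\otimes x_{-i})\in\mathrm{span}\,\K_\h$ whenever $\sum_i\gamma_i=0$. Pairing up indices, it is enough to treat the case of two indices $i,j$ with $\gamma_i = -\gamma_j = \gamma$, i.e.\ to show $x_i\otimes x_{-i} - x_j\otimes x_{-j}\in\mathrm{span}\,\K_\h$. For this I would exhibit an explicit commuting-pair identity: for instance, $(x_i + x_j)\otimes(x_{-i} - x_{-j})$ has bracket $[x_i,x_{-i}] - [x_j,x_{-j}] = 0$ (using $[x_i,x_{-j}]=[x_j,x_{-i}]=0$ for $i\neq j$), so it lies in $\K_\h$; expanding it and discarding the mixed terms $x_i\otimes x_{-j}$, $x_j\otimes x_{-i}$ (which are in $\K_\h$) leaves exactly $x_i\otimes x_{-i} - x_j\otimes x_{-j}$ modulo $\mathrm{span}\,\K_\h$. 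This closes the argument, giving $\M_\h\subseteq\mathrm{span}\,\K_\h$ and hence equality.

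The only mildly delicate point is bookkeeping in the infinite-dimensional case: one must be careful that ``almost all coefficients zero'' is respected throughout and that the reductions modulo $\mathrm{span}\,\K_\h$ involve only finite sums, which they do since any element of $L\otimes L$ is a finite combination of basis tensors. I expect the main obstacle to be purely organizational — setting up the normal form modulo $\mathrm{span}\,\K_\h$ cleanly — rather than any real conceptual difficulty; alternatively, one could bypass some of this by invoking Lemma~\ref{lsd}, writing $\h$ as an iterated semidirect product built from abelian pieces, but the direct computation above seems cleanest and most self-contained.
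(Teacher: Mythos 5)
Your proposal is correct and follows essentially the same route as the paper: the paper carries out the identical normal-form reduction (central element, off-diagonal pairs, and the key commuting pair $[x_i+x_j,\,x_{-i}-x_{-j}]=0$ giving $x_i\otimes x_{-i}\equiv x_j\otimes x_{-j}$), only phrased in $L\wedge L$ via Lemma~\ref{Wedge} rather than in $L\otimes L$ via Lemma~\ref{lemkm}, which is a purely cosmetic difference.
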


\begin{proof}  Let us also write $L$ for the above defined Heisenberg algebra $\h$.
It is clear that $\span\, \K'_L\subseteq \M'_L$, so it is enough to show that $\M'_L\subseteq\span \,\K'_L$.

By $u\cong v$ we mean $u-v\in \span \,\K'_L$. We see that
$$\aligned &c\wedge \h\cong 0,\\ &x_{\pm i}\wedge x_{\pm j}\cong 0, \forall i\ne j\in I,\\
&x_i\wedge x_{-i}\cong x_j\wedge x_{-j}, \forall i, j\in I,\endaligned$$
since $[x_i+x_j, x_{-i}-x_{-j}]=0$.

Take an arbitrary $Y=\sum_{j=1}^my_j\wedge y_j'\in \M'_L$. We may assume that
$$\aligned  &y_j= \sum_{i\in P} a_{ji}x_i+a_{j,-i}x_{-i},\\ &y_j'= \sum_{i\in P}a'_{ji}x_i+a'_{j,-i}x_{-i}, \endaligned$$
where $P$ is a finite subset of $I$, and $a_{j,\pm i}, a'_{j,\pm i}\in \F$. Then
$$\sum_{j=1}^m\sum_{i\in P}(a_{ji}a'_{j,-i}-a'_{ji}a_{j,-i})=0.$$
We see that
$$\aligned Y&=\sum_{j=1}^m\sum_{i\in P}(a_{ji}a'_{j,-i}-a'_{ji}a_{j,-i})x_i\wedge x_{-i}\\
&\cong\sum_{j=1}^m\sum_{i\in P}(a_{ji}a'_{j,-i}-a'_{ji}a_{j,-i})x_1\wedge x_{-1}=0.
\endaligned$$
Thus $\h$ is zpd in this case. This completes the proof.
\end{proof}

\section{Galilei Lie algebras}\label{s6}

Let $\sl_2$ be the $3$-dimensional simple Lie algebra over $\C$, i.e., the Lie algebra consisting of all traceless $2\times 2$ matrices.
Let $$E=\left(
                       \begin{array}{cc}
                         0 & 1 \\
                         0 & 0 \\
                       \end{array}
                     \right), H=\left(
                       \begin{array}{cc}
                         1 & 0 \\
                         0 & -1 \\
                       \end{array}
                     \right), F=\left(
                       \begin{array}{cc}
                         0 & 0 \\
                         1 & 0 \\
                       \end{array}
                     \right)$$
be the standard basis of $\sl_2(\C)$.
Denote by  $\ad$  the adjoint representation of $\sl_2$, i.e., $(\ad A)(B)=[A,B]$ for all $A,B\in\sl_2$. An element $A\in\sl_2$ is called
$\ad$-nilpotent if $\ad A$ is nilpotent; similarly, $A$ is called $\ad$-semisimple if $\sl_2$ can be
decomposed as the sum of engeispaces with respect to the action of $\ad A$.

For any nonzero 
$A\in \sl_2(\C)$, it is clear that $A$ has eigenvalues $\pm \sqrt{-\det(A)}$.
Thus $A$ is ad-nilpotent if $\det(A)=0$ and ad-semisimple if $\det(A)\ne 0$. In particular, if $A\in \sl_2(\C)$ is ad-semisimple, then there exists $P\in$GL$_2$ such that $P^{-1}AP=\det(A)H$; if $A\in \sl_2(\C)$ is ad-nilpotent and nonzero, then there exists $P\in$GL$_2$ such that $P^{-1}AP=E$.

\begin{lemma}\label{sl_2-module}
Let $V$ be a simple $\sl_2$-module.
If $V$ is zad, then $\dim V<\infty$.
\end{lemma}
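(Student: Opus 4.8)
The plan is to prove the contrapositive: if $V$ is an infinite-dimensional simple $\sl_2$-module, then $V$ is not zad. By Lemma~\ref{lemkm2} it suffices to exhibit an element of $\M_V$ that does not lie in $\span\,\K_V$, or equivalently (by the argument of Lemma~\ref{lemkm}) a bilinear map $f:\sl_2\times V\to\F$ vanishing on all pairs $(x,v)$ with $xv=0$ but failing \eqref{zad-2}. First I would recall the structure of infinite-dimensional simple $\sl_2$-modules over $\C$: these are the classical Lie-algebra analogue of the highest/lowest weight Verma-type quotients and the so-called dense (nonweighted or weighted) modules, but the key point I want is purely elementary: in an infinite-dimensional $V$, \emph{every} nonzero $x\in\sl_2$ acts injectively on $V$, or at least $E$ and $F$ do on the generic module, so that $\K_V$ is very small. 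The cleanest route is to observe that $xv=0$ forces, for $x$ ad-nilpotent, that $v$ lies in the kernel of a root vector; for a highest weight module $\ker F=0$ and $\ker E$ is the highest weight line, while for a lowest weight module the symmetric statement holds, and for dense modules both $E$ and $F$ act injectively.

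The heart of the argument is then a dimension/codimension count inside $\sl_2\otimes V$. The map $\mu:\sl_2\otimes V\to V$, $x\otimes v\mapsto xv$, is surjective (as $V$ is simple, $\sl_2 V=V$), so $\M_V=\ker\mu$ has codimension $\dim V=\infty$ — but that is not directly useful; what matters is comparing $\span\,\K_V$ with $\M_V$. I would show $\span\,\K_V$ is ``too small'': concretely, pick a weight basis $\{v_n\}$ (indexed so that $Hv_n=\lambda_n v_n$ with distinct $\lambda_n$, which one can arrange on a suitable submodule or on $V$ itself for a weight module), so that $Hv$ is a nonzero multiple of $v$ for every weight vector $v$, hence no pair $(H,v)$ with $v$ a weight vector lies in $\K_V$; more generally $Av=0$ with $A$ ad-semisimple forces $v=0$ after conjugating $A$ to a multiple of $H$. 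Combining with the ad-nilpotent case above, $\K_V$ is contained in $\{x\otimes v: x \text{ ad-nilpotent}, v\in\ker x\}$, and for the modules in question each such $\ker x$ is at most one-dimensional. A short computation then shows $\span\,\K_V$ is spanned by elements supported on at most a few weight lines, whereas $\M_V$ contains elements like $E\otimes v_n-\alpha_n H\otimes v_{n+2}$ (with $\alpha_n$ chosen so $Ev_n=\alpha_n H v_{n+2}$, using $Hv_{n+2}\ne0$), which are genuinely ``spread out'' and cannot be so expressed. The precise packaging is: exhibit a linear functional $\xi$ on $\sl_2\otimes V$ vanishing on $\K_V$ but not on some explicit element of $\M_V$, using that $\K_V$ lies in a proper, ``thin'' subspace while $\M_V$ does not.

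I expect the main obstacle to be handling the non-weight (dense) simple $\sl_2$-modules uniformly with the weight ones, since for those there is no weight basis and the naive count breaks down. The fix is to avoid case analysis on module type and instead argue directly: in \emph{any} infinite-dimensional simple $\sl_2$-module, the Casimir element acts as a scalar $c$, and one checks that the only vectors annihilated by an ad-semisimple $A$ are eigenvectors of a conjugate of $H$ — and such eigenvectors, if they existed in abundance, would force $V$ to be a weight module, which (being infinite-dimensional) still has one-dimensional weight spaces; so in all cases $\ker A$ is at most one-dimensional for $A$ ad-semisimple and likewise $\ker A\le 1$ for $A$ ad-nilpotent nonzero unless $V$ happens to be finite-dimensional. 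Thus $\K_V\subseteq\{x\otimes v: x\ne 0,\ \dim\ker(x|_V)\ge 1\}$ with each slice thin, and $\span\,\K_V$ is contained in a subspace of the form $\sum_{x}\,x\otimes\ker(x|_V)$, which one shows is a proper subspace of $\M_V$ by producing, as above, an explicit element $\sum_i x_i\otimes v_i$ with $\sum_i x_i v_i=0$ that is linearly independent of everything in $\span\,\K_V$. Once the ``thinness'' of $\K_V$ is established, the contradiction with Lemma~\ref{lemkm2} is immediate, completing the proof.
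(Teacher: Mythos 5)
Your overall strategy---prove the contrapositive by showing $\span\,\K_V$ is a proper subspace of $\M_V$ for infinite-dimensional simple $V$---is essentially the paper's, and your final witness $E\otimes v_n-\alpha_n H\otimes v_{n+2}$ is essentially the paper's element $H\otimes Ev-E\otimes(H+2)v$. But as written the argument has concrete gaps. First, the opening claim that every nonzero $x\in\sl_2$ acts injectively on an infinite-dimensional simple module is false ($E$ kills the highest weight vector of a simple highest weight module; $H$ kills the zero-weight vector of a dense module whose support contains $0$), and the fallback claim that $\ker(x|_V)$ is always at most one-dimensional, even if granted, does not make $\span\,\K_V$ small: $x$ ranges over a $3$-dimensional space, so $\sum_{x}x\otimes\ker(x|_V)$ could a priori be large. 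What actually makes $\K_V$ thin is that the set of $x$ admitting a nonzero kernel is confined to a proper subspace of $\sl_2$: the paper shows that if $E$ acts injectively and $Av=0$ with $v\ne0$, then comparing the top weight components of $v$ and $Av$ forces $A\in\C H\oplus\C F$, whence $\K_V\subseteq(\C H+\C F)\otimes V$, while the witness above has a nonzero $E$-component. This computation is the heart of the proof, and you replace it with ``a short computation then shows $\span\,\K_V$ is spanned by elements supported on at most a few weight lines,'' which is not even the right kind of statement (what is needed is thinness in the $\sl_2$ factor, not in the $V$ factor) and is not carried out.

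Second, the reduction to the weight-module case is not established. Your proposed fix for non-weight simple modules is circular: you need to know that some conjugate of $H$ acts semisimply on $V$ before you can invoke a weight basis, and the existence of such eigenvectors is exactly what is in question. The paper's mechanism is short and should be reproduced: since $\M_V\ne0$ always, the case $\K_V=\{0\}$ is immediately non-zad, so one may assume some nonzero $A$ kills some nonzero $v$; if $\det A\ne0$ then $A$ is conjugate to a multiple of $H$ and $v$ is already an eigenvector, while if $A$ is ad-nilpotent one may take $A=E$, and then the Casimir $(H+1)^2+4FE$ acting as a scalar gives $(H+1)^2v\in\C v$, so $H$ has an eigenvector and $V$ is a weight module by simplicity. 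Without these two ingredients the ``thinness'' of $\K_V$, and hence the contradiction with Lemma~\ref{lemkm2}, remain unsupported assertions.
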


\begin{proof}
It is straightforward to check that $0\ne H\otimes Fv-F\otimes (H-2)v\in \M_V$.
By the definition of zad modules, we see that $\span\,\K_V=\M_V\neq0$.\smallskip

\noindent{\bf Claim 1.} There exists a nonzero $A\in\sl_2$ such that $A$ acts semisimply on $V$.\smallskip

Choose any nonzero $A\otimes v\in\span\,\K_V, A\in\sl_2, v\in V$. Then $Av=0$.
If $\det(A)\ne 0$, then $A$ is ad-semisimple (which is similar to a multiple of $H$) and acts semisimply on $V$.

If $\det(A)=0$,
then $A$ is ad-nilpotent and thus similar to $E$. Without lost of generality, we may assume that $0\ne E\otimes v\in \span\,\mathcal{K}_V$, or equivalently, $Ev=0$. Since the Casimir element, $(H+1)^2+4FE\in U(\sl_2)$, of $\sl_2$ acts as a scalar on $V$, we see that $$(H+1)^2v=\big((H+1)^2+4FE\big)v\in\C v.$$ Thus, $H$ has an eigenvector in $V$ and hence acts semisimply on $V$, since $V$ is a simple module. 
\smallskip

By Claim 1, we may assume that $H$ acts semisimply on $V$ by replacing $E,F,H$ with $PEP^{-1}, PFP^{-1}, PHP^{-1}$ for some invertible matrix $P$ if necessary, that is, $V$ is a weight module with respect to the Cartan subalgebra $\C H$ of $\sl_2$.\smallskip

\noindent{\bf Claim 2.} $V$ is finite dimensional. 
\smallskip

Suppose on the contrary that $V$ is infinite dimensional. By the representation theory of $\sl_2$,
either $E$ or $F$ acts injectively on $V$.
Without lose of generality, we assume that $E$ acts injectively on $V$.
Take any nonzero $A\otimes v\in \mathcal{K}_V$. Then $Av=0$.
Writing $v$ as a sum of weight vectors and
comparing the highest weights that occur in the expressions of $v$ and $Av$ respectively,
we can easily see $A\in \C H\oplus \C F$.
In particular, $\mathcal{K}_V\subseteq (\C H+\C F)\otimes V$.
On the other hand, we have  $H\otimes Ev-E\otimes (H+2)v\in \M_V=\mathcal{K}_V$ for all nonzero $v\in V$, contradiction. So we must have $\dim V<\infty$.\end{proof}

Now we consider  finite dimensional simple modules over $\sl_2$. Let $m\in\Z_+$ and $V(m)$ be the simple $\sl_2$-module of dimension  $m+1$.
Choose a standard basis $\{v_0,\cdots,v_m\}$ of $V(m)$ such that the actions of $E,F,H$ can be expressed as
$$Fv_i=v_{i+1}, Ev_{i}=i(m+1-i)v_{i-1}, Hv_i=(m-2i)v_i,\ \forall\ i=0,1,\cdots,m,$$
where we have made the convention that $v_i=0$ whenever $i\neq0,1,\cdots,m.$

\smallskip

\begin{lemma}\label{sl_2-even}
Let $V$ be a simple $\sl_2$-module of even dimension.
Then $V$ is zad if and only if  $\dim V=2$.
\end{lemma}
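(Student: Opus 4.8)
The plan is to prove both directions concretely using the explicit basis $\{v_0,\dots,v_m\}$ with $m$ odd (so $\dim V = m+1$ is even), and the criterion from Lemma \ref{lemkm2} that $V$ is zad if and only if $\span\,\K_V=\M_V$. Since $\dim V(m)<\infty$, both $\M_V$ and $\K_V$ live in the finite dimensional space $\sl_2\otimes V$, so the question reduces to a linear algebra computation of dimensions and explicit spanning sets.

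For the easy direction, $\dim V=2$ means $m=1$, i.e. $V=V(1)$ is the natural $2$-dimensional module. Here $\sl_2 V = V$ (the module is faithful and nontrivial), so $\dim\M_V = 3\cdot 2 - 2 = 4$. On the other hand $\K_V$ contains $A\otimes v$ for every rank-one... more precisely for every $A\in\sl_2$ with $\det A=0$ and every $v$ in the kernel line of $A$; these are plentiful, and one checks directly (e.g. $E\otimes v_1$, $F\otimes v_0$, and for each scalar $t$ a vector killed by a suitable combination) that $\span\,\K_V$ already has dimension $4$, forcing $\span\,\K_V=\M_V$. I would present this as a short explicit verification: exhibit four linearly independent elements of $\K_V$ spanning all of $\sl_2\otimes V$.

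For the hard direction I must show $V(m)$ is \emph{not} zad when $m\ge 3$ is odd. The strategy is to produce an explicit linear functional $\xi$ on $\sl_2\otimes V$ that vanishes on all of $\K_V$ but not on all of $\M_V$; equivalently, to show $\span\,\K_V\subsetneq\M_V$ by a dimension count. For this I need to understand $\K_V$ well: an element $A\otimes v$ lies in $\K_V$ iff $v\in\ker(A$ acting on $V)$. Classifying $A\in\sl_2$ by its orbit, the generic ad-semisimple $A$ (conjugate to a multiple of $H$) acts with no kernel on $V(m)$ when... actually $H$ acts with eigenvalues $m-2i$, all nonzero since $m$ is odd, so every ad-semisimple element acts invertibly; hence $\K_V$ consists only of $A\otimes v$ with $A$ ad-nilpotent, i.e. (up to scalar and conjugation) $A$ in the closure of the orbit of $E$, acting on $V(m)$ with a one-dimensional kernel. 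So $\K_V = \{A\otimes v : \det A = 0,\ v\in\ker A\}$, a cone over a small-dimensional variety, and I would compute $\dim\span\,\K_V$ by choosing enough conjugates $PEP^{-1}$ and tracking their kernel vectors. Comparing with $\dim\M_V = 3(m+1)-\dim\sl_2 V = 3(m+1)-(m+1) = 2(m+1)$ (using that $V(m)$ is faithful for $m\ge1$, so $\sl_2 V = V$), I expect $\span\,\K_V$ to have strictly smaller dimension for $m\ge3$; the oddness of $m$ is exactly what kills all semisimple contributions and keeps $\K_V$ small.

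The main obstacle is the precise determination of $\dim\span\,\K_V$ for general odd $m\ge3$: I must show that the span of $\{PEP^{-1}\otimes v : v\in\ker(PEP^{-1})\}$ over all $P\in\mathrm{GL}_2$, together with the relations \eqref{xyyx}, does not fill out the $2(m+1)$-dimensional space $\M_V$. Concretely I would parametrize the nilpotent orbit by $A_t = E + tH - t^2 F$ (the $\mathrm{SL}_2$-conjugates of $E$) with kernel vector some explicit polynomial vector $w_t = \sum_k \binom{...}{...}t^k v_k$, expand $A_t\otimes w_t$ as a polynomial in $t$ with coefficients in $\sl_2\otimes V$, and note that the span of all these coefficients is governed by the image of a symmetrization-type map; a clean way is to identify $\sl_2 \cong V(2)$ and $V(m)$ with binary forms, so that $\K_V$ sits inside $V(2)\otimes V(m)$ and $\span\,\K_V$ becomes a sum of Clebsch--Gordan components, whose total dimension can be read off and compared with $\dim\M_V$. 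I would then exhibit the missing component explicitly to define the separating functional $\xi$, completing the proof that $V(m)$ is not zad for odd $m\ge 3$.
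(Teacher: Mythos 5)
Your proposal follows essentially the same route as the paper: reduce to comparing $\span\,\K_V$ with $\M_V$, observe that for odd $m$ every ad-semisimple element acts invertibly so $\K_V$ comes only from the nilpotent cone, parametrize that cone by $F+\lambda H-\lambda^2E$ (equivalently your $A_t$), expand the resulting kernel tensors in powers of the parameter, and compare dimensions with $\dim\M_V=2(m+1)$. The only step you leave as an expectation, the paper carries out explicitly: the coefficient vectors are $i(i-1)F\otimes v_{i-2}+iH\otimes v_{i-1}-E\otimes v_i$ for $i=0,\dots,m+2$, giving $\dim\span\,\K_V=m+3$, which is strictly less than $2m+2$ exactly when $m\geq 3$ and equal to it when $m=1$.
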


\begin{proof}  Assume that $V=V(m)$ is zad where $m\in\N$ is odd. As in the proof of the previous lemma, we know that $\span\,\K_V=\M_V\neq0$.
Choose any nonzero $A\in\sl_2$ and $v\in V$ such that $A\otimes v\in \mathcal{K}_V$, that is, $Av=0$.
Noticing the fact that any ad-semisimple element acts bijectively on $V$,
we have that $A$ is ad-nilpotent and hence $\det(A)=0$.
If $A\in\C E$, it is clear that $A\otimes v\in \C(E\otimes v_0)$.

Now suppose that $A\not\in \C E$. By replacing $A$ with its nonzero multiple we may assume that   $A=F+\lambda H-\lambda^2E$ for some $\l\in\C$. Assume $v=\sum_{i=0}^mc_iv_i$ for some $c_i\in\C$.
Then from $Av=0$, we compute out that
\begin{equation}\label{action}\aligned
0= & (F+\l H-\l^2E)\sum_{i=0}^mc_iv_i\\
=& \sum_{i=0}^{m-1}c_iv_{i+1}+\l\sum_{i=0}^m(m-2i)c_iv_{i}-\l^2\sum_{i=1}^{m}i(m+1-i)c_iv_{i-1}\\
=& \sum_{i=1}^{m}c_{i-1}v_{i}+\l\sum_{i=0}^m(m-2i)c_iv_{i}-\l^2\sum_{i=0}^{m-1}(i+1)(m-i)c_{i+1}v_{i},
\endaligned\end{equation}
which implies $$\l^2mc_{1}=\l mc_0,\,\,c_{m-1}=\l mc_m$$ and
\begin{equation}\label{c}
c_{i-1}=\l^2(i+1)(m-i)c_{i+1}-\l(m-2i)c_i,\ \forall\ i=1,\cdots, m-1.
\end{equation}
As a result, we deduce $$c_i=\frac{m!}{i!}\l^{m-i}c_m$$ for all $i=0,1,\cdots,m$.

Combining the previous result, we can obtain that
$$\aligned
\span\,\K_V=&\span\,\Big\{E\otimes v_0, \sum_{i=0}^m\frac{m!}{i!}\l^{m-i}(F+\l H-\l^2E)\otimes v_i : \l\in\C\Big\}\\
    = &\span\,\Big\{i(i-1)F\otimes v_{i-2}+iH\otimes v_{i-1}-E\otimes v_i : i=0,1,\cdots,m+2\Big\}.
\endaligned$$
In particular, we have $\dim\span\,\K_V=m+3$.

On the other hand, we know that $\dim \M_V=(\dim \sl_2\otimes V)^2-\dim (\sl_2\cdot V)=2(m+1)$.

If $m\geq 2$, then $\dim\M_V>\dim\span\,\K_V$ and $V$ is not zad. So $m=1$, i.e., $\dim V=2$.

If $V$ is of dimension $2$, i.e., $m=1$, from the above arguments we know that
$\dim\span\,\K_V=4$ and $$\dim\M_V =(\dim \sl_2\otimes V)^2-\dim (\sl_2\cdot V)=4.$$
Thus $\dim\M_V=\dim\span\,\K_V$ and hence $V$ is indeed zad.
\end{proof}

\begin{lemma}\label{sl_2-odd}
If $V$ is a simple $\sl_2$-module of odd dimension, then $V$ is zad.
\end{lemma}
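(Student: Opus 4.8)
The plan is to compute $\span\,\mathcal{K}_V$ explicitly for $V=V(m)$ with $m$ even, mimicking the analysis in the proof of Lemma \ref{sl_2-even}, and then show that its dimension equals $\dim\M_V = 2(m+1)$, which by Lemma \ref{lemkm2} forces $V$ to be zad. So first I would set up the same framework: an element $A\otimes v\in\mathcal{K}_V$ requires $Av=0$ with $A$ nonzero, and since any ad-semisimple element of $\sl_2$ acts bijectively on a finite dimensional simple module, $A$ must be ad-nilpotent, i.e. $\det A=0$. Up to scalar, either $A\in\C E$ (giving $E\otimes v_0$) or $A = F+\l H-\l^2 E$ for some $\l\in\C$; in the latter case, solving $Av=0$ with $v=\sum c_iv_i$ leads to the same recursion \eqref{c} together with the boundary relations. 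The key point, and the place where even $m$ differs from odd $m$, is that these equations are consistent for every $\l$, yielding $c_i = \tfrac{m!}{i!}\l^{m-i}c_m$; so for each $\l$ we get a genuine element of $\mathcal{K}_V$.

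Next I would extract a spanning set for $\span\,\mathcal{K}_V$ from these one-parameter families, just as in Lemma \ref{sl_2-even}: expanding $\sum_{i=0}^m\frac{m!}{i!}\l^{m-i}(F+\l H-\l^2 E)\otimes v_i$ as a polynomial in $\l$ and collecting coefficients of each power of $\l$ shows that
$$\span\,\mathcal{K}_V = \span\,\bigl\{\,i(i-1)F\otimes v_{i-2}+iH\otimes v_{i-1}-E\otimes v_i \;:\; i=0,1,\dots,m+2\,\bigr\},$$
with the convention $v_j=0$ outside $\{0,\dots,m\}$. There are $m+3$ such generators. The essential difference from the odd case is that here these $m+3$ elements are \emph{not} linearly independent: I would identify exactly one linear dependence among them (coming from the fact that for even $m$ the pattern of vanishing boundary terms is different), so that $\dim\span\,\mathcal{K}_V = m+2 = \dots$; wait — I should be more careful, since $2(m+1) = 2m+2 > m+3$ for $m\ge 2$, so a mere count of these generators is not enough. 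This tells me the spanning set above cannot be the whole story for even $m$.

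So the real approach has to produce \emph{more} elements of $\mathcal{K}_V$ than in the odd case, or more honestly: for even $m$ there must be additional structure. Reconsidering, I suspect the cleaner route is to directly verify $\M_V\subseteq\span\,\mathcal{K}_V$ by exhibiting, for every pair $(a,b)$ with $0\le a,b\le m$, that $H\otimes v_a$, $E\otimes v_a$, $F\otimes v_a$ are each congruent mod $\span\,\mathcal{K}_V$ to elements of a controlled form, and then checking that the only constraint surviving is the single scalar relation defining $\M_V$. Concretely: from the generators $i(i-1)F\otimes v_{i-2}+iH\otimes v_{i-1}-E\otimes v_i\equiv 0$ one can, running $i$ from $0$ upward, successively express $E\otimes v_i$ in terms of $H\otimes v_{i-1}$ and $F\otimes v_{i-2}$, and hence reduce any element of $\sl_2\otimes V$ modulo $\span\,\mathcal{K}_V$ to a combination of the $H\otimes v_j$ and $F\otimes v_j$ alone; then one uses relations of the type $H\otimes Ev - E\otimes(H+2)v\in\M_V$ and $F\otimes Ev - E\otimes(H-2F\cdots)$ — more precisely the identities $H\otimes Fv-F\otimes(H-2)v\in\M_V$ and their images under the reduction — to collapse further, and finally matches the count against $\dim\M_V=2(m+1)$. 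The main obstacle, clearly, is this dimension bookkeeping: one must show the reduction procedure does not over-collapse, i.e. that the resulting quotient of $\sl_2\otimes V$ by $\span\,\mathcal{K}_V$ has dimension exactly $\dim\sl_2\cdot V = m+1$ (not less), and that is precisely where evenness of $m$ is used — for odd $m$ Lemma \ref{sl_2-even}'s computation gave a deficiency, whereas for even $m$ the boundary relations $\l^2 m c_1 = \l m c_0$ and $c_{m-1}=\l m c_m$ impose no obstruction because the relevant coefficients never force $\l$-dependence to break, so the family of solutions is large enough to span all of $\M_V$. I would carry out the explicit change of basis in $\sl_2\otimes V(m)$ adapted to the generators, reduce to upper-triangular form, and read off that the corank is $m+1$, completing the proof.
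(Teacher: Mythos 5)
There is a genuine gap, and it occurs at the very first step: you assert that any nonzero $A\in\sl_2$ with $A\otimes v\in\mathcal{K}_V$ must be ad-nilpotent because ``any ad-semisimple element acts bijectively on a finite dimensional simple module.'' That is true precisely when $\dim V$ is even and false when $\dim V$ is odd: for $V=V(m)$ with $m=2k$ the weights are $m,m-2,\dots,0,\dots,-m$, so $0$ is a weight and \emph{every} nonzero element of $\sl_2$ --- semisimple ones included --- annihilates a nonzero vector of $V$. This is exactly the source of the extra elements of $\mathcal{K}_V$ that you correctly sense must exist but never locate. The paper's proof takes the $m+3$ elements $\mathcal{B}_1=\{i(i-1)F\otimes v_{i-2}+iH\otimes v_{i-1}-E\otimes v_i\}$ coming from the nilpotent family $F+\l H-\l^2E$ (the same ones you found) and then adds two further families obtained by solving $Av=0$ for the \emph{semisimple} elements $A=H+2\l F$ and $A=H-2\l E$; these yield $\mathcal{B}_2=\{H\otimes v_{k+i}+2iF\otimes v_{k+i-1}\}$ and $\mathcal{B}_3=\{(k-i+1)(k+i)H\otimes v_{k-i}-2iE\otimes v_{k-i+1}\}$, contributing $k$ and $k-1$ new independent elements respectively. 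The total is $(m+3)+k+(k-1)=2m+2=\dim\mathcal{M}_V$, which closes the argument. Without $\mathcal{B}_2\cup\mathcal{B}_3$ you have only $m+3<2m+2$ generators (for $m\geq 2$), so no amount of reorganization can make them span $\mathcal{M}_V$.

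Your fallback plan in the last paragraph does not repair this. Reducing $E\otimes v_i$ modulo the $\mathcal{B}_1$-relations and then ``collapsing further'' using identities such as $H\otimes Fv-F\otimes(H-2)v\in\mathcal{M}_V$ is circular: those identities certify membership in $\mathcal{M}_V$, not in $\span\,\mathcal{K}_V$, and the whole point of the lemma is to prove $\mathcal{M}_V\subseteq\span\,\mathcal{K}_V$. Likewise, your suggestion that the parity of $m$ enters through the boundary relations $\l^2mc_1=\l mc_0$ and $c_{m-1}=\l mc_m$ is not where the distinction lies --- the nilpotent family behaves identically for both parities and always produces a single solution line per $\l$. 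The parity enters only through whether $0$ is a weight of $V(m)$, i.e.\ through whether semisimple elements contribute to $\mathcal{K}_V$ at all.
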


\begin{proof} Suppose $V=V(m)$ where $m=2k>0$ for some $k\in\N$. By a similar argument as in the previous lemma, solving $Av=0$ for $v\in V$ with
$A=F+\l H-\l^2E$ for any $\l\in\C$, we deduce that
$$\mathcal{B}_1=\big\{i(i-1)F\otimes v_{i-2}+iH\otimes v_{i-1}-E\otimes v_i, i=0,1,\cdots,m+2\big\}\subset \span\,\mathcal{K}_V.$$
Recall that we have assumed that $v_i=0$ whenever $i\notin\{0,1,\cdots,m\}$.

Taking $A=H+2\lambda F$ for $\l\in\C$, and solving $Av=0$  for $v\in V$   we deduce that
$$\sum_{i=0}^k\frac{\l^i}{i!}(H+2\l F)\otimes v_{k+i}\in \mathcal{K}_V,  \forall \l\in\C.$$
Thus
$$\mathcal{B}_2=\big\{H\otimes v_{k+i}+2iF\otimes v_{k+i-1}\,|\,i=0,1\ldots,k-1\big\}\subset\span \,\mathcal{K}_V.$$
Note that we did not include the elements for $i=k$ and $i=k+1$, since they have already occurred
in $\mathcal{B}_1$.

Similarly for $A=H-2\lambda E$, by solving $Av=0$ for $v\in V$, we deduce that
$$\sum_{i=0}^k\frac{(k+i)!}{i!(k-i)!}\l^i(H-2\lambda E)\otimes v_{k-i}\in \mathcal{K}_V.$$
Thus
$$\mathcal{B}_3=\big\{(k-i+1)(k+i)H\otimes v_{k-i}-2iE\otimes v_{k-i+1}|i=1,\ldots k-1\big\}\subset\span \,\mathcal{K}_V.$$
Here we do not include the elements for $i=0, i=k$ and $i=k+1$, since they have already occurred
in $\mathcal{B}_1$ or $\mathcal{B}_2$. 

Now it is straightforward to check that $\mathcal{B}_1\cup \mathcal{B}_2\cup \mathcal{B}_3$
is a linearly independent set and has cardinality $2m+2$. We know that
$2m+2\le\dim\span\, \mathcal{K}_V\le \dim \M_V$. Since $\sl_2\cdot V=V$ and $\dim\sl_2\otimes V=3m+3$, we see that
$ \dim\M_V=2m+2$. Thus   $\dim\M_V=\dim\span\,\K_V$ and hence $V$ is  zad.
\end{proof}

Combining Lemma \ref{semidirect} and the above results in this section, we deduce the following

\begin{theorem}\label{gal} 
Suppose $V $ is a simple module over $\sl_2$.  Then the Galilei algebra $\sl_2 \ltimes V$ is zpd if and only if $\dim V =2$ or $\dim V$ is odd.
\end{theorem}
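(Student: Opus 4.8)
The plan is to deduce the theorem by combining the structural Lemma \ref{semidirect} with the module-level results established in Lemmas \ref{sl_2-module}, \ref{sl_2-even}, and \ref{sl_2-odd}. The key observation is that $\sl_2$ is a Lie algebra in which any two commuting elements are linearly dependent: indeed, the centralizer of any nonzero $A \in \sl_2$ is one-dimensional (if $A$ is $\ad$-semisimple its centralizer is the Cartan it generates; if $A$ is $\ad$-nilpotent its centralizer is $\C A$), so $\sl_2$ satisfies the hypothesis of Lemma \ref{semidirect}. Moreover, $\sl_2$ is a $3$-dimensional simple Lie algebra, hence zpd by the Proposition of Section \ref{s3} (or directly by Lemma \ref{Wedge}(b), since $\sl_2 \wedge \sl_2 \to [\sl_2,\sl_2] = \sl_2$ is an isomorphism). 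Therefore Lemma \ref{semidirect} applies and gives the equivalence
$$
\sl_2 \ltimes V \text{ is zpd} \iff \sl_2 \text{ is zpd and } V \text{ is zad} \iff V \text{ is zad},
$$
the last step because $\sl_2$ is always zpd. So the theorem reduces entirely to characterizing when the simple $\sl_2$-module $V$ is zad.

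Next I would dispatch the module question using the three lemmas of this section. If $\dim V = \infty$, then by Lemma \ref{sl_2-module} $V$ is not zad, so $\sl_2 \ltimes V$ is not zpd; and indeed $\dim V$ is neither $2$ nor odd (finite), consistent with the claimed equivalence. If $\dim V < \infty$ is even, write $\dim V = m+1$ with $m$ odd; Lemma \ref{sl_2-even} says $V$ is zad if and only if $\dim V = 2$. If $\dim V$ is odd, Lemma \ref{sl_2-odd} says $V$ is zad. Assembling these cases: $V$ is zad precisely when $\dim V = 2$ or $\dim V$ is odd (the infinite-dimensional case being excluded by Lemma \ref{sl_2-module}). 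Combined with the equivalence from the previous paragraph, this is exactly the assertion of the theorem.

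The only point requiring a small amount of care — and the place where I expect a referee to want a line of justification — is the verification that $\sl_2$ satisfies the hypothesis of Lemma \ref{semidirect}, i.e.\ that commuting elements of $\sl_2$ are linearly dependent; this is the remark already made in the text just before Lemma \ref{semidirect} (``A simple example is the Lie algebra $\sl_2$''), so it can be invoked without further ado. There is no genuine obstacle here: all the substantive work has been done in Lemmas \ref{sl_2-module}--\ref{sl_2-odd}, and the proof of the theorem is a three-line bookkeeping argument combining them with Lemma \ref{semidirect}. I would write it as: by the remark preceding Lemma \ref{semidirect}, $\sl_2$ has the property that commuting elements are linearly dependent, and $\sl_2$ is zpd; hence by Lemma \ref{semidirect}, $\sl_2 \ltimes V$ is zpd if and only if $V$ is zad; now apply Lemmas \ref{sl_2-module}, \ref{sl_2-even}, and \ref{sl_2-odd} to conclude that $V$ is zad if and only if $\dim V = 2$ or $\dim V$ is odd.
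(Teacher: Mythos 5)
Your proposal is correct and follows exactly the paper's own route: the paper deduces the theorem by "combining Lemma \ref{semidirect} and the above results in this section," i.e.\ the same reduction to the zad question for $V$ via Lemma \ref{semidirect} and then the case analysis through Lemmas \ref{sl_2-module}, \ref{sl_2-even}, and \ref{sl_2-odd}. Your write-up is in fact more explicit than the paper's one-line assembly, and the point you flag for care (that commuting elements of $\sl_2$ are linearly dependent) is precisely the hypothesis the paper invokes implicitly.
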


Let $\bb=\C E\oplus\C H$ be the Borel subalgebra of $\sl_2$. Next we consider a subalgebra of the Galilei algebra $\sl_2\ltimes V(m)$. Let  $\bb(m)=\bb\ltimes V(m)\subset \sl_2 \ltimes V(m)$.

\begin{corollary}Let $m\in\N$. \begin{enumerate} 
\item[(a)]  The  $\bb$-module $V(m)$ is zad if and only if $m=2$.
\item[(b)] The Lie algebra $\bb \ltimes V(m)$ is zpd if and only if $m=2$.
\end{enumerate}
\end{corollary}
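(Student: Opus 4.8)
The plan is to deduce both parts from the machinery already developed for $\sl_2$ in this section, using the Borel subalgebra $\bb = \C E \oplus \C H$ in place of $\sl_2$. For part (a), I would first observe that $\bb$ itself is isomorphic to the $2$-dimensional nonabelian Lie algebra (with $[H,E]=2E$), so it is zpd by the low-dimensional result; hence by Lemma \ref{lsd} the implication "$V(m)$ is zad $\Rightarrow \bb\ltimes V(m)$ is zpd" holds, and conversely $\bb = (\bb\ltimes V(m))/V(m)$ is zpd and $V(m)$ is zad whenever $\bb\ltimes V(m)$ is zpd, PROVIDED we can invoke a result like Lemma \ref{semidirect}. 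But $\bb$ does \emph{not} have the property that commuting elements are linearly dependent (e.g. $H$ and $E$ do not commute, but $E$ commutes with $\lambda E$ only; actually $\bb$ is fine — any two commuting elements of $\bb$ \emph{are} linearly dependent since $[\bb,\bb]=\C E$ is one-dimensional and $[H,E]\neq0$). So in fact Lemma \ref{semidirect} applies directly to $L=\bb$, and part (b) follows immediately from part (a). Thus the real content is part (a), and part (b) is a formal consequence.

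For part (a), the strategy is a direct computation of $\span\,\K_{V(m)}$ and $\M_{V(m)}$ \emph{for the $\bb$-action} and a comparison of dimensions, exactly parallel to the proofs of Lemmas \ref{sl_2-even} and \ref{sl_2-odd}, but now with the smaller algebra. Here $\M_{V(m)}$ is the kernel of $\bb\otimes V(m)\to V(m)$, so $\dim\M_{V(m)} = 2(m+1) - \dim(\bb\cdot V(m))$. Since $\bb$ acts on $V(m)$ via $E,H$, the image $\bb\cdot V(m)$ is $\span\{v_0,\dots,v_{m-1}\} + \span\{v_0,\dots,v_m\} = V(m)$ as long as $m\geq 1$, so $\dim\M_{V(m)} = 2(m+1) - (m+1) = m+1$. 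For $\span\,\K_{V(m)}$: an element $A\otimes v$ with $A\in\bb$, $Av=0$ is either of the form $E\otimes v_0$ (up to scalar, when $A\in\C E$) or, writing $A = H + 2\lambda E$ (the only other ad-nilpotent-modulo-scalar possibilities in $\bb$, since $H+cE$ has the same eigenvalues as $H$ for all $c$), we solve $(H+2\lambda E)v=0$. This is the same computation as the " $\mathcal B_2$ " part in the proof of Lemma \ref{sl_2-odd}, but now valid for \emph{all} $m$, not only even ones: $(H+2\lambda E)\sum c_i v_i = 0$ forces $c_i = 0$ unless the weights line up, and since $H$ has eigenvalue $m-2i$ there is a solution only when some $m-2i=0$, i.e. when $m$ is even, $m=2k$, giving the one-parameter family $\sum_{i=0}^{k}\frac{\lambda^i}{i!}(H+2\lambda E)\otimes v_{k+i}$. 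Expanding in powers of $\lambda$ yields the spanning vectors $H\otimes v_{k+i} + 2iE\otimes v_{k+i-1}$ for $i=0,\dots,k$, together with $E\otimes v_0$; when $m$ is odd there is \emph{no} such vector and $\span\,\K_{V(m)} = \C(E\otimes v_0)$.

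Counting: when $m$ is odd, $\dim\span\,\K_{V(m)} = 1 < m+1 = \dim\M_{V(m)}$ for all $m\geq1$ (with equality impossible since $m\geq1$ means $m+1\geq2$), so $V(m)$ is not zad. When $m=2k$ is even, the candidate spanning set is $\{E\otimes v_0\}\cup\{H\otimes v_{k+i}+2iE\otimes v_{k+i-1} : i=1,\dots,k\}$ (the $i=0$ term is $H\otimes v_k$, which is independent of the others), so $\dim\span\,\K_{V(m)} = 1 + (k+1) = k+2$; here I should double-check the exact count by writing out the $\lambda$-expansion carefully, but the point is it equals $k+2$. We need $k+2 = m+1 = 2k+1$, i.e. $k=1$, i.e. $m=2$. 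For $m=2$ one checks directly that $\dim\span\,\K_{V(2)} = 3 = \dim\M_{V(2)}$, so $V(2)$ is zad, while for $m=2k\geq4$ we get $k+2 < 2k+1$ and $V(m)$ is not zad. This proves (a). The main obstacle, though a routine one, is getting the dimension count of $\span\,\K_{V(m)}$ exactly right in the even case — in particular making sure no extra linear relations among the vectors $H\otimes v_{k+i}+2iE\otimes v_{k+i-1}$ and $E\otimes v_0$ sneak in, and that these together with nothing else span $\K_{V(m)}$ (i.e. that every $A\otimes v\in\K_{V(m)}$ with $A\notin\C E$ really does come from the displayed one-parameter family after rescaling $A$ to have $H$-coefficient $1$, and that $A$ with zero $H$-coefficient forces $A\in\C E$).
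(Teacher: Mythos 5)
Your proposal follows the paper's proof exactly: compute $\dim\M_{V(m)}=m+1$ from surjectivity of $\bb\otimes V(m)\to V(m)$, show $\K_{V(m)}=\C\,E\otimes v_0$ for odd $m$, count $\dim\span\K_{V(m)}=k+2$ for $m=2k$, and deduce (b) from (a) via Lemma \ref{semidirect} because commuting elements of $\bb$ are linearly dependent. One concrete slip, however: the one-parameter family you write down for even $m$ is wrong. You transplanted the $\mathcal B_2$ family from Lemma \ref{sl_2-odd}, which arises from $A=H+2\lambda F$ and is supported on $v_k,\dots,v_m$; but $F\notin\bb$, and $E$ lowers rather than raises the index, so $(H+2\lambda E)\sum_{i=0}^k\frac{\lambda^i}{i!}v_{k+i}\neq 0$ and the vectors $H\otimes v_{k+i}+2iE\otimes v_{k+i-1}$ do not lie in $\span\K_{V(m)}$. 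The correct family is the paper's $\mathcal B_3$: the recursion from $(H-2\lambda E)v=0$ forces the solution to be supported on $v_0,\dots,v_k$, giving $\sum_{i=0}^k\frac{(k+i)!}{i!(k-i)!}\lambda^i(H-2\lambda E)\otimes v_{k-i}$ and the spanning set $\{(k-i+1)(k+i)H\otimes v_{k-i}-2iE\otimes v_{k-i+1}\,|\,i=0,1,\dots,k+1\}$. Since this set also has $k+2$ elements, your dimension count and the conclusion $k+2=2k+1\iff m=2$ survive unchanged, so this is a repairable computational error rather than a gap in the argument.
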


\begin{proof}  
(a) For any $m\in\N$, we   see that the linear map
$$\bb\otimes V(m)\to V(m), x\otimes v\mapsto xv, \forall x\in \bb, v\in V(m),$$
is onto. Then $\dim\M_{V(m)}=m+1$.
 
 First we assume that $m$ is odd.  It is easy to see that $\K_{V(m)}=\C E\otimes v_0$. Thus $\K_{V(m)}\ne \M_{V(m)}$, i.e.,
$V(m)$ is not zad in this case.

Now we assume that $m=2k$ is even. From the computations in the proof of Lemma 6.3 we see that
$$\mathcal{B}_3=\big\{(k-i+1)(k+i)H\otimes v_{k-i}-2iE\otimes v_{k-i+1}|i=0, 1,\ldots k+1\big\}$$ is a basis of $\mathcal{K}_{V(m)}$, i.e.,
$\dim \K_{V(m)}=k+2$.
Then $V(m)$ is zad if and only if $\M_{V(m)}=\K_{V(m)}$,  if and only if $m+1=k+2$, if and only if $m=2$.

(b) follows from (a) and Lemma 2.12 since only linearly dependent elements in $\bb$ commute.
\end{proof}

From this corollary we obtain a $4$-dimensional non-zpd Lie algebra $\bb(2)=\bb \ltimes V(2)$ where $V(2)$ is the $2$ dimensional module with basis $\{u,v\}$ and action
$$Hu=u, Hv=-v, Ev=u, Eu=0.$$
This $4$-dimensional  Lie algebra $\bb(2)$ is the centerless $1$-spacial aging algebra $\mathfrak{
age}(1)$. See \cite{HS}.

For representations of Galilei algebras, see \cite{LMZ}.

\section{Quantum torus Lie algebras}\label{s7}

Let $q\in\C$, not a root of unity. Let $\C_q[t_1^{\pm1},t_2^{\pm1}]$ be the
associative algebra generated by $t_1^{\pm1}, t_2^{\pm1}$ subject to the relation
$$t_1t_1^{-1}=t_1^{-1}t_1=t_2t_2^{-1}=t_2^{-1}t_2=1,$$
$$ t_2t_1=qt_1t_2.$$
 See \cite{AABGP, EZ}.
Let $\C_q[t_1,t_2]$ be the subalgebra of $\C_q[t_1^{\pm1},t_2^{\pm1}]$ generated by $t_1,t_2$.
For any $\bm=(m_1,m_2)\in\Z^2$, denote ${\bt}^{\bm}=t_1^{m_1}t_2^{m_2}$.
Then it is clear that $${\bt}^{\bm}{\bt}^{\bn}=q^{m_2n_1}\bt^{\bm+\bn},\,\,\,\,{\bt}^{\bm}{\bt}^{\bn}=q^{m_2n_1-n_2m_1}\bt^{\bn}\bt^{\bm}, \forall\bm,\bn\in\Z^2.$$

Let $\LL_q$ be the associated Lie algebra of $\C_q[t_1^{\pm1}, t_2^{\pm1}]$.
 The Lie bracket of $\LL_q$ is given by
$$[{\bt}^{\bm}, {\bt}^{\bn}]=(q^{m_2n_1}-q^{n_2m_1})\bt^{\bm+\bn}.$$
Let $\LL_q^+$ be the associated Lie algebra of $\C_q[t_1,t_2]$.

To show that $\LL_q$ and $\LL^+_q$ are zpd Lie algebras, we need the following  lemma:

\begin{lemma}\label{basis}
For any ${\bn}=(n_1,n_2)\in \Z_+^2$ such that   $n_1,n_2$ are coprime and $n_1n_2\neq0$,
there exists ${\bm}=(m_1,m_2)\in \Z_+^2$ with $(m_1, m_2)\ne (n_1,n_2)$,  $m_1\le n_1 $,   $m_2\le n_2 $
such that  $\{{\bm},{\bn}\}$ forms a $\Z$-basis of $\Z^2$.
\end{lemma}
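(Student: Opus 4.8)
The plan is to reduce the statement to an explicit determinant condition and then produce $\bm$ by elementary modular arithmetic. Recall that two vectors in $\Z^2$ form a $\Z$-basis of $\Z^2$ if and only if the $2\times 2$ integer matrix having them as its rows has determinant $\pm1$. So it suffices to produce $\bm=(m_1,m_2)\in\Z_+^2$ with $m_1\le n_1$, $m_2\le n_2$, $\bm\ne\bn$, and $m_1 n_2 - m_2 n_1 = 1$.

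First I would invoke the hypothesis $\gcd(n_1,n_2)=1$: then $n_2$ is invertible modulo $n_1$, so there is a unique $m_1$ in the complete residue system $\{1,2,\dots,n_1\}$ with $m_1 n_2 \equiv 1 \pmod{n_1}$. (Working with $\{1,\dots,n_1\}$ rather than $\{0,\dots,n_1-1\}$ guarantees $m_1\ge 1$ and simultaneously covers the degenerate case $n_1=1$, where $m_1=1$.) Next I would set $m_2 := (m_1 n_2 - 1)/n_1$; this is an integer by the congruence, and it is nonnegative because $m_1\ge 1$ and $n_2\ge 1$ force $m_1 n_2 - 1 \ge 0$. By construction $m_1 n_2 - m_2 n_1 = 1$, so the matrix with rows $\bm$ and $\bn$ has determinant $1$, and hence $\{\bm,\bn\}$ is a $\Z$-basis of $\Z^2$.

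It then remains only to check the size constraints. The bound $m_1\le n_1$ is built into the choice of $m_1$. For the other bound, $m_2 = (m_1 n_2 - 1)/n_1 \le (n_1 n_2 - 1)/n_1 < n_2$, so $m_2\le n_2$ and moreover $m_2\ne n_2$, which in particular gives $\bm\ne\bn$. That finishes the verification.

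I do not expect a genuine obstacle here; the only slightly delicate points are (i) recalling that ``$\Z$-basis of $\Z^2$'' is equivalent to ``determinant $\pm1$'', and (ii) making the construction robust against the edge cases $n_1=1$ or $n_2=1$ — which is exactly why I would work with the residue system $\{1,\dots,n_1\}$ and aim for determinant $+1$, so that $m_1\ge1$ and $m_2\ge0$ hold automatically and no case split is needed. An alternative route is to invoke the Stern--Brocot (mediant) structure, noting that $n_1/n_2$ written in lowest terms has a unique parent fraction $m_1/m_2$ enjoying exactly these properties; but the congruence construction above is shorter and self-contained.
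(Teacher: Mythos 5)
Your proof is correct and follows essentially the same route as the paper's: both construct $\bm$ from a Bézout relation (you via the inverse of $n_2$ modulo $n_1$ normalized to $\{1,\dots,n_1\}$, the paper by shifting the Bézout coefficients by a multiple of $(n_1,n_2)$ to land $m_1$ in $[0,n_1)$) and then read off the bounds on $m_2$ from the determinant identity. The only cosmetic difference is the sign of the determinant and which endpoint of the ranges $m_1$ and $m_2$ may attain; your handling of the edge cases $n_1=1$, $n_2=1$ is sound.
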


\begin{proof} Since $n_1,n_2\in\N$ are coprime,
there exists $u,v\in\Z$ such that $un_1-vn_2=1$.
Take $k\in\Z$ such that $0\leq v-kn_1<n_1$ and denote $m_1=v-kn_1, m_2=u-kn_2$.
We have $m_2n_1-m_1n_2=1$. Since $0\leq m_1<n_1$, we also have $0< m_2\leq n_2$.
\end{proof}

\begin{theorem}\label{quantum tori}
If  $q\in\C$  is not a root of unity, then  the Lie algebra $\LL_q$ is zpd.
\end{theorem}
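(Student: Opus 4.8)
The plan is to use the wedge-product formulation from Lemma~\ref{Wedge}(a): show that $\span\,\K'_{\LL_q}=\M'_{\LL_q}$, working modulo $\span\,\K'_{\LL_q}$ throughout (write $u\cong v$ for $u-v\in\span\,\K'_{\LL_q}$). The Lie algebra $\LL_q$ is $\Z^2$-graded with one-dimensional homogeneous components $\C\bt^{\bm}$, and $[\bt^{\bm},\bt^{\bn}]=(q^{m_2n_1}-q^{n_2m_1})\bt^{\bm+\bn}$ vanishes exactly when $m_2n_1=n_2m_1$, i.e.\ when $\bm,\bn$ are $\Q$-proportional. So $\bt^{\bm}\wedge\bt^{\bn}\in\K'_{\LL_q}$ whenever $\bm\parallel\bn$; in particular $\bt^{\bm}\wedge\bt^{-\bm}\cong 0$ and $\bt^{\bm}\wedge\bt^{k\bm}\cong 0$ for all $k$. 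The first reduction is that $\M'_{\LL_q}$ is spanned by its homogeneous components: if $\sum_i \bt^{\bm_i}\wedge\bt^{\bn_i}\in\M'_{\LL_q}$, grouping the wedge terms by the value of $\bm_i+\bn_i\in\Z^2$, the image under the bracket map is a sum of elements in distinct graded pieces $\C\bt^{\bd}$, so each graded piece of the relation already lies in $\M'_{\LL_q}$. Hence it suffices to treat, for each fixed $\bd\in\Z^2$, elements of the form $Y=\sum_i c_i\,\bt^{\bm_i}\wedge\bt^{\bd-\bm_i}$ with $\sum_i c_i(q^{(d_2-m_{i,2})m_{i,1}}-q^{m_{i,2}(d_1-m_{i,1})})=0$, and show $Y\cong 0$.

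Next I would fix the homogeneous degree $\bd$ and analyze the "fiber'' of wedges $\bt^{\bm}\wedge\bt^{\bd-\bm}$ as $\bm$ ranges over $\Z^2$ (these are indexed by $\bm$ up to swapping $\bm\leftrightarrow\bd-\bm$). The key structural move is to show that all nonzero-bracket wedges $\bt^{\bm}\wedge\bt^{\bd-\bm}$ in a fixed fiber are congruent to scalar multiples of a single fixed one, modulo $\span\,\K'$. The mechanism for this is the Jacobi-type/triple relation: for $\bt^{\ba},\bt^{\bb},\bt^{\bc}$ with $\ba+\bb+\bc$ fixed, the element $[\bt^{\ba},\bt^{\bb}]\wedge\bt^{\bc}+[\bt^{\bc},\bt^{\ba}]\wedge\bt^{\bb}+[\bt^{\bb},\bt^{\bc}]\wedge\bt^{\ba}$ — wait, that is not automatically in $\span\,\K'$; instead the right tool is simply that $\M'_{\LL_q}$ contains specific small relations. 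Concretely: take $x=\bt^{\be_1}$-type generators and use that $[\bt^{\bm}+\bt^{\bm'},\bt^{\bn}]$-style combinations give zero brackets. The cleanest route is to reduce everything to the line through a chosen primitive vector. Using Lemma~\ref{basis}, after replacing the coordinate lattice by a $\Z$-basis $\{\bm_0,\bn_0\}$ adapted to $\bd$ (possible when $\bd$ is nonzero; the $\bd=0$ case is handled directly by $\bt^{\bm}\wedge\bt^{-\bm}\cong 0$), one can arrange that the fiber over $\bd$ becomes a $\C^\times$-weighted family parametrized by a single integer, and the zero-bracket elements correspond to the proportional cases. Then a computation entirely analogous to the Heisenberg case in Section~\ref{s5} shows the one relation $\sum_i c_i(\text{scalar}_i)=0$ forces $Y\cong 0$: every term is congruent to (its scalar coefficient)$\times$(one fixed wedge), and the coefficients sum to zero.

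More precisely, the engine I expect to use repeatedly is: for $\bm,\bn,\bp\in\Z^2$ with $\bm+\bn=\bp+\bq$ (some $\bq$) and suitable coprimality, one has congruences of the form $\bt^{\bm}\wedge\bt^{\bn}\cong\mu\,\bt^{\bp}\wedge\bt^{\bq}$ for an explicit nonzero $\mu\in\C$ obtained by expanding a zero-bracket wedge $(\bt^{\ba}+\lambda\bt^{\bb})\wedge(\text{something})\in\K'$, or by using $\bt^{\ba}\wedge\bt^{\ba'}\cong 0$ when $\ba\parallel\ba'$ together with bilinearity. Iterating these, every wedge in the degree-$\bd$ fiber with nonzero bracket is pushed onto one normalized representative $w_{\bd}$ with a tracked scalar; the defining membership condition for $\M'_{\LL_q}$ becomes precisely that the sum of the tracked scalars is $0$, so $Y\cong c\cdot w_{\bd}$ with $c=0$, i.e.\ $Y\cong 0$. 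Since this holds for each $\bd$ and $\M'_{\LL_q}$ is the span of its homogeneous pieces, $\M'_{\LL_q}\subseteq\span\,\K'_{\LL_q}$, and the reverse inclusion is trivial, so $\LL_q$ is zpd by Lemma~\ref{Wedge}(a).

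The main obstacle I anticipate is the bookkeeping in the degree-$\bd$ fiber when $\bd$ is not primitive, say $\bd=r\bd_0$ with $\bd_0$ primitive and $r>1$: then the decomposition $\bm+(\bd-\bm)=\bd$ includes the "degenerate'' splittings where $\bm$ is itself a multiple of $\bd_0$ (bracket zero, hence already in $\K'$) interleaved with generic splittings, and one must be careful that $q$ not being a root of unity is exactly what prevents spurious extra zero-bracket coincidences (the scalar $q^{m_2n_1}-q^{n_2m_1}$ vanishes only for the genuinely proportional pairs). Making the chain of congruences land on a single representative without circular reasoning — i.e.\ choosing a consistent "reduction order'' on the fiber — is the delicate point; using Lemma~\ref{basis} to change to adapted coordinates so that the fiber is linearly ordered by one integer coordinate should make the reduction manifestly well-founded, reducing the whole computation to the Heisenberg-type telescoping already carried out in Section~\ref{s5}.
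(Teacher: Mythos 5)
There is a genuine gap: your reduction never identifies where the nontrivial commuting pairs come from, and the ones you do exhibit are not enough to carry out the fiber-by-fiber telescoping. The grading reduction (each homogeneous component of an element of $\M'_{\LL_q}$ is again in $\M'_{\LL_q}$) is fine, and so is the observation that $\bt^{\bm}\wedge\bt^{\bn}\in\K'_{\LL_q}$ exactly when $\bm,\bn$ are proportional. But to push every wedge $\bt^{\bm}\wedge\bt^{\bd-\bm}$ in a fixed fiber onto one representative you need congruences linking two \emph{different} decompositions $\bd=\bm+\bn=\bp+\bq$ with $\bm,\bn$ linearly independent, and the relations you propose cannot produce them. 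A product of two $2$-term elements $(\bt^{\ba}+\lambda\bt^{\bb})\wedge(\bt^{\bc}+\mu\bt^{\bd})$ lies in $\K'_{\LL_q}$ only if \emph{every} graded component of its bracket vanishes; chasing the degree constraints (e.g.\ $\ba\parallel\bc$, $\bb\parallel\bd$, $\ba+\bd=\bb+\bc$, so $\bb=s\bn$, $\bc=t\bm$ with $(t-1)\bm=(1-s)\bn$) forces $s=t=1$ whenever $\bm,\bn$ are independent, i.e.\ you recover only the decomposition you started with. This is precisely where the quantum torus differs from the Heisenberg algebra, whose $2$-step nilpotency makes the cross terms vanish for free; the "entirely analogous telescoping'' does not exist here.

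The missing engine, which is the heart of the paper's argument, is that in the associative algebra $\C_q[t_1^{\pm1},t_2^{\pm1}]$ every element commutes with its own powers: $[\l\bt^{\bm}+\bt^{\bn},(\l\bt^{\bm}+\bt^{\bn})^k]=0$, where $(\l\bt^{\bm}+\bt^{\bn})^k=\sum_{i=0}^k\l^i a_i(q)\bt^{i\bm+(k-i)\bn}$. These are $2$-term-by-$(k+1)$-term commuting pairs; extracting the coefficient of each power of $\l$ (a Vandermonde argument over infinitely many $\l\in\C$) yields, degree by degree, exactly the two-term congruences
$a_{j-1}(q)\,\bt^{\bm}\wedge\bt^{(j-1)\bm+(k-j+1)\bn}+a_j(q)\,\bt^{\bn}\wedge\bt^{j\bm+(k-j)\bn}\in\span\K'_{\LL_q}$
that connect distinct splittings within a fiber, with the hypothesis that $q$ is not a root of unity guaranteeing $a_i(q)\neq0$. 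The paper runs this (in the dual, functional formulation via the set $T$ and the correction map $\Phi$) together with three-term elements $\l\bt^{\bm}+\mu\bt^{-\bm}+\bt^{\bn}$ and an induction on $|n_1n_2|$ using Lemma \ref{basis} to cover all of $\Z^2$. Your "main obstacle'' paragraph correctly flags that the reduction order is delicate, but without the powers-of-an-element identity the chain of congruences you need cannot even be started, so the proof as proposed does not go through.
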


\begin{proof}
Let $f: \LL_q\times\LL_q\rightarrow \C$ be a bilinear map satisfying that
$f(x,y)=0$ whenever $[x,y]=0$. Define a linear map $\Phi: [\LL_q, \LL_q]\rightarrow\C$ as
follows:
$$\aligned
& \Phi({\bt}^{m_1,m_2})=f({\bt}^{(m_1-1,m_2)},{\bt}^{(1,0)})/(q^{m_2}-1),\ \forall\ m_2\neq0;\\
& \Phi({\bt}^{m_1,0})=f({\bt}^{(m_1,-1)},{\bt}^{(0,1)})/(1-q^{m_1}),\ \forall\ m_1\neq0.
\endaligned$$
It is easy to check that
$$\aligned
& f(\bt^{(1,0)}, \bt^{\bm})=\Phi([\bt^{(1,0)}, \bt^{\bm}]),\ \forall\ \bm\in\Z^2,\\
& f(\bt^{(0,1)}, \bt^{(m,-1)})=\Phi([\bt^{(0,1)}, \bt^{(m,-1)}]),\ \forall\ m\in\Z.
\endaligned$$
Define $$\phi(x,y)=f(x,y)-\Phi([x,y])$$ for all $x, y\in\LL_q$ and denote
$$T=\big\{\bm\in\Z^2\ |\ \phi(\bt^{\bm}, \bt^{\bn})=0,\ \forall\ \bn\in\Z^2\big\}.$$
We have $(0,0), (1,0)\in T$. To show that $\LL_q$ is zpd, we need only to show that $T=\Z^2$.
\smallskip

\noindent{\bf Claim 1.}
If $\bm\in T$, then $j\bm\in T$ for all $j\in\Z_+$ and
\begin{equation}\label{bna}\aligned
& \phi(\bt^{\bn}, \bt^{j\bm+k\bn})=0,\ \forall\ \bn\in\Z^2, k,j\in\Z_+.
\endaligned\end{equation}
\smallskip

For any $\l\in\C, k\in\N$ and $\bn\in\Z^2$, we always have
$[\l\bt^{\bm}+\bt^{\bn}, (\l\bt^{\bm}+\bt^{\bn})^k]=0$,
where
$$(\l\bt^{\bm}+\bt^{\bn})^k=\sum_{i=0}^k\l^ia_i(q)\bt^{i\bm+(k-i)\bn}$$
and each $a_i(q)$ is a polynomial function in $q$.

By the assumption on $f$, we have
$$f(\l\bt^{\bm}+\bt^{\bn}, \sum_{i=0}^k\l^ia_i(q)\bt^{i\bm+(k-i)\bn})=0
=\Phi([\l\bt^{\bm}+\bt^{\bn}, \sum_{i=0}^k\l^ia_i(q)\bt^{i\bm+(k-i)\bn}]),\ \forall\ \l\in\C.$$
In particular, $$\phi(\l\bt^{\bm}+\bt^{\bn}, \sum_{i=0}^k\l^ia_i(q)\bt^{i\bm+(k-i)\bn})=0, \,\,\,\forall  \l\in\C.$$ Since $\bt^{\bm}\in T$, we see $$\sum_{i=0}^k\l^ia_i(q)\phi(\bt^{\bn}, \bt^{i\bm+(k-i)\bn})=0,\,\,\,\forall \l\in\C,$$ which implies
$$a_i(q)\phi(\bt^{\bn}, \bt^{i\bm+(k-i)\bn})=0,\ \forall\ \bn\in\Z^2, i=0,1,\cdots,k.$$
Taking $i=k$, we deduce $\phi(\bt^{\bn}, \bt^{km})=0$ forcing $k\bm\in T$
since $a_{k}(q)=q^{\frac{k(k-1)m_1m_2}{2}}\neq0$.
Taking $i=1$, we deduce $\phi(\bt^{\bn}, \bt^{\bm+(k-1)\bn})=0$ for all $k\in\N$
since $a_1(q)$ is a nonzero multiple of $(1+q_{\bm,\bn}+\cdots+q^{k-1}_{\bm,\bn})$ and hence nonzero,
where $q_{\bm,\bn}$ is defined by $\bt^{\bm}\bt^{\bn}=q_{\bm,\bn}\bt^{\bn}\bt^{\bm}$.
Replacing $\bm$ with $j\bm$ for $j\in\Z_+$ in the above argument, we get the result of this claim.
\smallskip

Now take arbitrary $\bm, \bn\in\Z^2$. For any $\l, \mu\in\C$ and $k\in\N$,
we consider the identity
$[(\l\bt^{\bm}+\mu\bt^{-\bm}+\bt^{\bn}, (\l\bt^{\bm}+\mu\bt^{-\bm}+\bt^{\bn})^k]=0$.
Suppose
$$(\l\bt^{\bm}+\mu\bt^{-\bm}+\bt^{\bn})^k=\sum_{i,j=0}^k\l^i\mu^ja_{ij}(q)\bt^{i\bm-j\bm+(k-i-j)\bn},$$
where $a_{ij}$ are some polynomials in $q$. By a similar argument as before, we can deduce 
\begin{equation}\label{lmu}
\phi\Big(\l\bt^{\bm}+\mu\bt^{-\bm}+\bt^{\bn}, \sum_{i,j=0}^k\l^i\mu^ja_{ij}(q)\bt^{(i-j)\bm+(k-i-j)\bn}\Big)=0,\ \forall\ \l, \mu\in\C.
\end{equation} \smallskip

\noindent{\bf Claim 2.} If $\bm\in T$, then $j\bm\in T$ for all $j\in\Z$ and
$$\phi(\bt^{\bn}, \bt^{j\bm+k\bn})=0,\ \forall\ \bn\in\Z^2, j\in\Z, k\in\Z_+.$$
\smallskip

Taking  $\bm\in T$ in \eqref{lmu}, we see
\begin{equation}\label{mu}
\sum_{i,j=0}^k\l^i\mu^ja_{ij}(q)\phi(\mu\bt^{-\bm}+\bt^{\bn}, \bt^{(i-j)\bm+(k-i-j)\bn})=0,\
 \forall\ \l, \mu\in\C.
\end{equation}
Regarding the left hand side of \eqref{mu} as a polynomial in $\l, \mu$ and considering the coefficient
of $\l^{k-1}\mu$, we see that
$$a_{k-1,1}(q)\phi(\bt^{\bn}, \bt^{(k-2)\bm})+a_{k-1,0}(q)\phi(\bt^{-\bm}, \bt^{(k-1)\bm+\bn})=0.$$
Taking $k=2$ and noticing $a_{1,0}(q)=q^{m_2n_1}+q^{n_2m_1}\neq0$,
we see that $\phi(\bt^{-\bm}, \bt^{\bm+\bn})=0$ for all $\bn\in\Z^2$,  that is, $-\bm\in T$.
Now by Claim 1, we get $j\bm\in T$ and $\phi(\bn, j\bm+k\bn)=0$ for all $j\in\Z, k\in\Z_+$.
\smallskip

\noindent{\bf Claim 3.} If $\bm\in T$, then 
$\phi(\bt^{\bn}, \bt^{j\bm+k\bn})=0,\ \forall\ \bn\in\Z^2, j\in\Z, k\in\Z, k\neq-1.$
\smallskip

Exchanging $\bm$ and $\bn$ in \eqref{lmu}, we get
\begin{equation*}
\phi\Big(\l\bt^{\bn}+\mu\bt^{-\bn}+\bt^{\bm}, \sum_{i,j=0}^k\l^i\mu^ja_{ij}(q)\bt^{(i-j)\bn+(k-i-j)\bm}\Big)=0,\ \forall\ \l, \mu\in\C.
\end{equation*}
Note that $a_{ij}(q)$ has been changed since we have exchanged $\bm$ and $\bn$.

Since $\bm\in T$, we have
\begin{equation}\label{mul}
\sum_{i,j=0}^k\l^i\mu^ja_{ij}(q)\phi\Big(\l\bt^{\bn}+\mu\bt^{-\bn}, \bt^{(i-j)\bn+(k-i-j)\bm}\Big)=0,\ \forall\ \l, \mu\in\C.
\end{equation}
Considering the coefficient of $\l\mu^{k-1}$, we get
$$a_{0,k-1}(q)\phi(\bt^{\bn}, \bt^{(1-k)\bn+\bm})+a_{1,k-2}(q)\phi(\bt^{-\bn}, \bt^{(3-k)\bn+\bm})=0,\ \forall\ k\in\N.$$
If $k\geq 3$, we already have $\phi(\bt^{-\bn}, \bt^{(3-k)\bn+\bm})=0$ by Claim 2 and
hence $$a_{0,k-1}(q)\phi(\bt^{\bn}, \bt^{(1-k)\bn+\bm})=0.$$
It is straightforward to calculate that $a_{0,k-1}(q)$ is a nonzero multiple of
$1+q^{-1}_{\bm,\bn}+\cdots+q^{1-k}_{\bm,\bn}$ and hence nonzero. We see that
$\phi(\bt^{\bn}, \bt^{(1-k)\bn+\bm})=0$ for $k\geq3$.
Replacing $\bm$ with $j\bm, j\in\Z$ in the above arguments, we see
$\phi(\bt^{\bn}, \bt^{-k\bn+j\bm})=0$ for all $k,j\in\Z$ with $k\geq2$.
The claim follows from Claim 2. 
\smallskip

\noindent{\bf Claim 4.} $T=\Z^2$. 
\smallskip

Recalling that $(1,0)\in T$, we have $\phi(\bt^{(0,1)},\bt^{(j,k)})=0$ for $j,k\in\Z, k\neq-1$ by Claim 3.
On the other hand, by the definition of $\Phi$,
we have $f(\bt^{(0,1)}, \bt^{(j,-1)})=\Phi([\bt^{(0,1)}, \bt^{(j,-1)}])$ for all $j\in\Z$
and hence $\phi(\bt^{(0,1)}, \bt^{(j,-1)})=0$. As a result, $(0,1)\in T$.
By Claim 2, we also have $(0,k), (k, 0)\in T$ for all $k\in\Z$.

Now take any $\bn=(n_1,n_2)\in\Z^2$ such that $n_1,n_2$ are coprime.
We will prove $\bn\in T$ by induction on $|n_1n_2|$.
For $|n_1n_2|=0$, the result is true by the previous discussion.

Now suppose $|n_1n_2|\geq1$. Without loss of generality we may assume that $\bn\in\Z_+^2$. By Lemma \ref{basis}, there exists $\bm=(m_1,m_2)\in\Z_+^2$ with $(m_1, m_2)\ne (n_1,n_2)$,  $m_1\le n_1 $,   $m_2\le n_2 $,
such that $\{\bm,\bn\}$ forms a $\Z$-basis of $\Z^2$.
Denote $\bm'=(n_1-m_1, n_2-m_2)\in\Z_+^2$.
Then $\{\bm',\bn\}$ forms a basis of $\Z^2$ and $|(n_1-m_1)(n_2-m_2)|<|n_1n_2|$.
By induction hypothesis, we have $\bm,\bm'\in T$. Then Claim 3 indicates
$$\phi(\bt^{\bn}, \bt^{j\bm+k\bn})=0,\ \phi(\bt^{\bn}, \bt^{j\bm'+k\bn})=0,\ \forall\ j,k\in\Z, k\neq-1.$$
For any $\bn'\in\Z^2$, we can write $\bn'=j_1\bm+k_1\bn=j_2\bm'+k_2\bn$ for some $j_1,k_1,j_2,k_2\in\Z$. Noticing that $k_1$ and $k_2$ can not both be equal to $-1$, we have $\phi(\bt^{\bn},\bt^{\bn'})=0$ and $\bn\in T$. By Claim 2, we see $k\bn\in T$ for all $k\in\Z$. The claim and hence the theorem follows.
\end{proof}

Using similar methods as in the previous theorem, we can also show that $\LL_q^+$ is a zpd Lie algebra.

\begin{theorem}\label{quantum plane}
If  $q\in\C$  is not a root of unity, then  the Lie algebra $\LL^+_q$ is zpd.
\end{theorem}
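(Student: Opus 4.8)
The plan is to mimic the proof of Theorem \ref{quantum tori} almost verbatim, tracking where the restriction to the subalgebra $\LL^+_q$ (generated by $t_1, t_2$, so spanned by $\bt^{\bm}$ with $\bm \in \Z_+^2$) changes the argument. First I would fix a bilinear map $f : \LL^+_q \times \LL^+_q \to \C$ vanishing on commuting pairs, and try to write down an explicit candidate $\Phi$ on $[\LL^+_q, \LL^+_q]$. The bracket $[\bt^{\bm}, \bt^{\bn}] = (q^{m_2 n_1} - q^{n_2 m_1})\bt^{\bm+\bn}$ shows that $[\LL^+_q, \LL^+_q]$ is spanned by those $\bt^{\bk}$ with $\bk = \bm + \bn$, $\bm, \bn \in \Z_+^2$, and $m_2 n_1 \neq n_2 m_1$; in particular $\bt^{(k,0)}$ and $\bt^{(0,k)}$ are \emph{not} in the derived algebra, which is the main structural difference from the full quantum torus. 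So I would define $\Phi(\bt^{(m_1, m_2)}) = f(\bt^{(m_1 - 1, m_2)}, \bt^{(1,0)})/(q^{m_2} - 1)$ whenever $m_2 \neq 0$ and $m_1 \geq 1$ (which covers all lattice points in $\Z_+^2$ with $m_2 \neq 0$), and on the remaining part of $[\LL^+_q, \LL^+_q]$, namely monomials $\bt^{(m_1, 0)}$ with $m_1 \geq 2$, use $\Phi(\bt^{(m_1, 0)}) = f(\bt^{(m_1 - 1, 0)} \cdot \text{something})$; more precisely one writes $\bt^{(m_1,0)} = \frac{1}{q^{-1}-q}[\bt^{(m_1-1,1)}, \bt^{(1,-1)}]$ — but $\bt^{(1,-1)} \notin \LL^+_q$, so instead I would use a relation like $[\bt^{(m_1 - 1, 1)}, \bt^{(1, 0)}]$ combined with $[\bt^{(1,1)}, \bt^{(m_1 - 2, 0)}]$ type identities, or simply note $\bt^{(m_1, 0)}$ for $m_1 \geq 2$ arises as $\frac{1}{q^{m_1 - 1} - q}[\bt^{(m_1-1, 1)}, \bt^{(1, -1)}]$ is unavailable and use instead $[\bt^{(1,1)},\bt^{(m_1-1,-1)}]$ — also unavailable. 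The clean fix is: for $m_1 \geq 2$, $\bt^{(m_1,0)} = c \cdot [\bt^{(m_1-1,1)},\bt^{(1,0)}] + c' \cdot (\text{lower})$ — actually $[\bt^{(a,1)},\bt^{(b,0)}] = (q^b - q^a)\bt^{(a+b,1)}$ stays in weight-$1$, so that does not help. I would instead set $\Phi(\bt^{(m_1,0)}) = f(\bt^{(m_1,-1)}, \bt^{(0,1)})/(1-q^{m_1})$ is again unavailable; so the honest route is to observe that $\bt^{(m_1, 0)}$, $m_1 \geq 2$, \emph{is} hit: $[\bt^{(1,1)}, \bt^{(m_1 - 1, -1)}]$ fails, but $[\bt^{(m_1-1, 1)} + \bt^{(m_1, 0)}, \dots]$ — I will not grind this here; the point is that one identifies a spanning set of $[\LL^+_q,\LL^+_q]$ by brackets of elements of $\LL^+_q$ and defines $\Phi$ consistently on it, exactly as in the $\LL_q$ case but with the book-keeping that $\Z_+^2$ replaces $\Z^2$.

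Next, following the $\LL_q$ proof, I set $\phi(x,y) = f(x,y) - \Phi([x,y])$ and $T = \{\bm \in \Z_+^2 \mid \phi(\bt^{\bm}, \bt^{\bn}) = 0 \ \forall \bn \in \Z_+^2\}$, with the goal $T = \Z_+^2$. The core engine is the same: for any $\l \in \C$, $k \in \N$, and $\bm, \bn$, the element $\l\bt^{\bm} + \bt^{\bn}$ commutes with its $k$-th power $\sum_{i=0}^k \l^i a_i(q) \bt^{i\bm + (k-i)\bn}$, and since $q$ is not a root of unity the coefficients $a_i(q)$ are nonzero; feeding this into $\phi$ and using $\bm \in T$ gives $\phi(\bt^{\bn}, \bt^{i\bm + (k-i)\bn}) = 0$ for all $i$. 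Crucially, because we are inside $\Z_+^2$, \emph{all} exponents $i\bm + (k-i)\bn$ automatically lie in $\Z_+^2$ when $\bm, \bn \in \Z_+^2$ — so the analogue of Claim 1 goes through with no changes, and in fact the more delicate Claims 2 and 3 of the $\LL_q$ proof (which introduced $\bt^{-\bm}$ to pass from $\Z_+$ to $\Z$) become \emph{unnecessary}: we never need negative multiples. This is where the positive-cone case is genuinely easier.

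The final step is the induction proving $T = \Z_+^2$. I would first check $(1,0) \in T$ and $(0,1) \in T$ directly from the definition of $\Phi$ (the defining relations force $\phi(\bt^{(1,0)}, \bt^{\bn}) = 0$ for all $\bn \in \Z_+^2$, and similarly, using whatever secondary relation pins down $\Phi$ on the $\bt^{(m_1,0)}$, that $\phi(\bt^{(0,1)}, \bt^{\bn}) = 0$). Then, as in the $\LL_q$ proof, $(k,0), (0,k) \in T$ for all $k \in \Z_+$ by Claim 1. For general $\bn = (n_1, n_2)$ with $\gcd(n_1, n_2) = 1$ and $n_1 n_2 \geq 1$, Lemma \ref{basis} produces $\bm = (m_1, m_2) \in \Z_+^2$ with $m_1 \leq n_1$, $m_2 \leq n_2$, $\bm \neq \bn$, and $\{\bm, \bn\}$ a $\Z$-basis; setting $\bm' = \bn - \bm \in \Z_+^2$, both $\bm, \bm' \in T$ by induction on $|n_1 n_2|$ (since $(n_1 - m_1)(n_2 - m_2) < n_1 n_2$). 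Now for arbitrary $\bn' \in \Z_+^2$ I would like to write $\bn' = j_1\bm + k_1\bn = j_2\bm' + k_2\bn$ with nonnegative $k$'s so as to apply the positive-cone Claim 1 to conclude $\phi(\bt^{\bn}, \bt^{\bn'}) = 0$. The expected main obstacle is precisely this: over $\Z^2$ one always has such expressions, but the coefficients $j, k$ need not be nonnegative, so the clean "all exponents stay in $\Z_+^2$" feature can fail when expressing a fixed $\bn'$ in the two bases. I would handle this the same way the $\LL_q$ proof does — by establishing the vanishing $\phi(\bt^{\bn}, \bt^{j\bm + k\bn}) = 0$ for the widest possible range of $j, k$ (all $j, k \in \Z$ with $k \neq -1$, via the $\bt^{\pm \bm}$ trick applied \emph{within} $\LL^+_q$ wherever the exponents happen to be nonnegative, plus the observation that $k_1, k_2$ cannot both be $-1$) — but one must double-check that every monomial $\bt^{i\bm \mp j\bm + (k-i\mp j)\bn}$ appearing in the expansions actually lies in $\Z_+^2$; the reason this still works is that it suffices to verify the vanishing on a spanning set of exponents, and the two-basis argument of Claim 4 only requires finitely many reachable exponents, all of which can be arranged to be nonnegative by choosing the auxiliary powers large enough. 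I would write this out carefully, flagging that this positivity book-keeping is the only place the proof differs in substance from Theorem \ref{quantum tori}.
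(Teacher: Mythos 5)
Your overall strategy (reuse the commuting-powers engine of Theorem \ref{quantum tori} and exploit that $\Z_+^2$ is closed under nonnegative combinations) is the right starting point, but the proposal has two genuine problems. The first is the long digression about defining $\Phi$ on the monomials $\bt^{(m_1,0)}$, $m_1\ge 2$: this chases a non-issue and contradicts your own opening observation. As you note, $[\bt^{\bm},\bt^{\bn}]=(q^{m_2n_1}-q^{n_2m_1})\bt^{\bm+\bn}$ with $\bm,\bn\in\Z_+^2$ and $\bm+\bn=(m_1,0)$ forces $m_2=n_2=0$, whence the coefficient vanishes; so $\bt^{(m_1,0)}$ and $\bt^{(0,m_2)}$ simply do not lie in $[\LL^+_q,\LL^+_q]$. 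Hence $[\LL^+_q,\LL^+_q]=\text{span}\{t_1^it_2^j\,:\,i,j\in\N\}$ and the single formula $\Phi(\bt^{(m_1,m_2)})=f(\bt^{(m_1-1,m_2)},\bt^{(1,0)})/(q^{m_2}-1)$ for $m_1,m_2\in\N$ already defines $\Phi$ on all of the derived algebra; there is no ``remaining part''. This is exactly what the paper does.

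The second problem is more serious: your endgame does not close. The negative-exponent manipulations of Claims 2--4 in the $\LL_q$ proof are not merely ``unnecessary'' here --- they are unavailable, since $\bt^{-\bm}\notin\LL^+_q$, and the positive-cone version of Claim 1 only yields $\phi(\bt^{\bn},\bt^{j\bm+k\bn})=0$ for $j,k\in\Z_+$, i.e.\ for exponents in the cone generated by $\bm$ and $\bn$. For a fixed $\bn$ and the two bases $\{\bm,\bn\}$, $\{\bm',\bn\}$ from Lemma \ref{basis}, the set of exponents you can reach is contained in the cone generated by $\bm$ and $\bm'$, which is in general a proper subcone of $\Z_+^2$ (try $\bn=(2,3)$, $\bm=(1,1)$, $\bm'=(1,2)$), so an arbitrary $\bn'\in\Z_+^2$ is out of reach; ``choosing the auxiliary powers large enough'' cannot help for a fixed target $\bn'$. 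The paper's actual finish is different and much shorter: Claim 1 applied to $(1,0)\in T$ with $\bn=(0,1)$ gives $(0,1)\in T$, hence $(j,0),(0,k)\in T$ for all $j,k\in\Z_+$; then expanding $[x,x^2]=0$ for $x=\bt^{\bn}+\l\bt^{(j,0)}+\mu\bt^{(0,k)}$ and extracting the coefficient of $\l\mu$ yields $\phi\bigl(\bt^{\bn},(1+q^{jk})\bt^{(j,k)}\bigr)=0$ for every $\bn\in\Z_+^2$ and every $(j,k)\in\Z_+^2$, and since $1+q^{jk}\ne0$ this gives $T=\Z_+^2$ with no induction and no use of Lemma \ref{basis}. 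You would need this identity (or some substitute for it); as written, your argument leaves $\phi(\bt^{\bn},\bt^{\bn'})$ uncontrolled for $\bn'$ outside the two cones.
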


\begin{proof} Note that $ [\LL^+_q,\LL^+_q]=$span$\{t_1^it_2^j:i,j\in\N\}$.
Let $f: \LL^+_q\times\LL^+_q\rightarrow \C$ be a bilinear map satisfying that
$f(x,y)=0$ whenever $[x,y]=0$. Define a linear map $\Phi: [\LL^+_q,\LL^+_q]\rightarrow\C$ as
follows:
$$\aligned
& \Phi({\bt}^{(m_1,m_2)})=f({\bt}^{(m_1-1,m_2)},{\bt}^{(1,0)})/(q^{m_2}-1),\ \forall\ m_1, m_2\in\N.\\
\endaligned$$
For convenience, we denote $$\phi(x,y)=f(x, y)-\Phi([x, y])$$ for all $x,y\in\LL_q^+$.
Denote $$T=\big\{\bm\in\Z_+^2\ |\ \phi(\bt^{\bm}, \bt^{\bn})=0,\ \forall\ \bn\in\Z_+^2\big\}.$$
To show that $\LL^+_q$ is zpd, we need only to show $\phi(\bm,\bn)=0$ for all $\bm, \bn\in\Z_+^2$,
or equivalently, $T=\Z_+^2$.
Note that $(0,0), (1,0)\in T$. \smallskip
\smallskip 

\noindent{\bf Claim 1.} If $\bm\in T$, then $k\bm\in T$ for all $k\in\Z_+$ and
\begin{equation}\label{bn}\aligned
& \phi(\bt^{\bn}, \bt^{j\bm+k\bn})=0,\ \forall\ \bn\in\Z_+^2, j,k\in\Z_+.
\endaligned\end{equation}
\smallskip

For any $\l\in\C, k\in\N$ and $\bn\in\Z_+^2$, we always have
$[\l\bt^{\bm}+\bt^{\bn}, (\l\bt^{\bm}+\bt^{\bn})^k]=0$,
where
$$(\l\bt^{\bm}+\bt^{\bn})^k=\sum_{i=0}^k\l^ia_i(q)\bt^{i\bm+(k-i)\bn}$$
and each $a_i(q)$ is a function on $q$. 

By the assumption on $f$, we have
$$f(\l\bt^{\bm}+\bt^{\bn}, \sum_{i=0}^k\l^ia_i(q)\bt^{i\bm+(k-i)\bn})=0
=\Phi([\l\bt^{\bm}+\bt^{\bn}, \sum_{i=0}^k\l^ia_i(q)\bt^{i\bm+(k-i)\bn}]),\ \forall\ \l\in\C.$$
In particular, $$\phi(\l\bt^{\bm}+\bt^{\bn}, \sum_{i=0}^k\l^ia_i(q)\bt^{i\bm+(k-i)\bn})=0$$ for all $\l\in\C$. Since $\bm\in T$, we see 
$$\sum_{i=0}^k\l^ia_i(q)\phi(\bt^{\bn}, \bt^{i\bm+(k-i)\bn})=0$$ for all $\l\in\C$, which implies
$$a_i(q)\phi(\bt^{\bn}, \bt^{i\bm+(k-i)\bn})=0,\ \forall\ i=0,1,\cdots,k.$$
Taking $i=k$, we deduce $\phi(\bt^{\bn}, \bt^{km})=0$ forcing $k\bm\in T$
since $a_{k}(q)\neq0$.
Taking $i=1$, we deduce $\phi(\bt^{\bn}, \bt^{\bm+(k-1)\bn})=0$ for all $k\in\N$ since $a_1(q)\neq0$.
Replacing $\bm$ with $j\bm$ for $j\in\Z_+$ in the above argument, we get the result of this claim.
\smallskip

\noindent{\bf Claim 2.} $T=\Z_+^2$. 
\smallskip

By Claim 1, we see $(0,1)\in T$ by the fact that $(1,0)\in T$. Hence $(j,0), (0,k)\in T$ for all $j,k\in\Z_+$. 
Now for any $\bn\in\Z_+^2$ and $\l,\mu\in\C$, the identity 
$$[\bt^{\bn}+\l\bt^{(j,0)}+\mu\bt^{(0,k)}, (\bt^{\bn}+\l\bt^{(j,0)}+\mu\bt^{(0,k)})^2]=0,\ \forall\ j,k\in\Z_+$$
implies 
$$\phi\big(\bt^{\bn}, \l(\bt^{\bn}\bt^{(j,0)}+\bt^{(j,0)}\bt^{\bn})+\mu(\bt^{\bn}\bt^{(0,k)}+\bt^{(0,k)}\bt^{\bn})+\l\mu(\bt^{(j,0)}\bt^{(0,k)}+\bt^{(0,k)}\bt^{(j,0)})\big)=0,$$
forcing $\phi(\bt^{\bn},\bt^{(j,k)})=0$ for all $j,k\in\Z_+$. The result follows. 
\end{proof}

\section{Affine Lie algebras}\label{s8}
In this section we assume that $\F=\C$.
Let ${\g}$ be a finite-dimensional simple Lie algebra  with Cartan decomposition $\g=\h\oplus \oplus_{\a\in \Delta}\g_{\a}$, where  $\Delta$ is root system of $\g$ with repect to a Cartan subalgebra $\h$ and $$\g_{\a}=\{x\in \g|[h,x]=\a(h)x,\forall h\in \h\}.$$  We write $(x,y)$ for the Killing form
 on ${\g}$.

Let $\Delta^\vee=\{h_\alpha|\alpha\in\Delta\}$ be the coroot system of $\Delta$, and $\{e_\alpha|\alpha\in\Delta\}$ be the Chevalley basis of $\g$ (See Chapter 7 in \cite{H}). For any $\alpha\in\Delta$ we know that
$$\aligned &h_{-\alpha}=-h_\alpha,\,\,\, \alpha(h_\alpha)=2,\\
&[e_{\a},e_{-\a}]=h_{\a}, \,\,\,[h_{\a},e_{\pm \a}]=\pm 2e_{\pm \a}.\endaligned$$
Since the Killing form is invariant, we have
$$(h_{\a},h_{\a})=([e_{\a},e_{-\a}],h_{\a})=(e_{\a},[e_{-\a},h_{\a}])=2(e_{\a},e_{-\a}),\forall\a\in\Delta.$$ 

 The (untwisted) affine Lie algebra  associated
with ${\g}$ is defined as $$ {\tg}= {\g} \otimes {\C}[t,t^{-1}] \oplus
{\C}c \oplus {\C}d, $$
where $c$ is the canonical central element
 and  the Lie algebra structure
is given by (see \cite{K})
\begin{equation*} 
 [ x \otimes t^n, y \otimes t^m] = [x,y] \otimes t^{n+m} + n (x,y) \delta_{n+m,0} c, \end{equation*}
$$
[d, x \otimes t^n] = n x \otimes t^n $$
for $x,y \in {\g}$ and $m,n\in \mathbb{Z}$.  We will
write $x(n)$ for $x \otimes t^{n}$, and
$$\hg=[\tg,\tg]=\g\otimes \C[t,t^{-1}]+\C z,$$
 $$\hat\h=\h\otimes \C[t,t^{-1}]+\C c.$$
  We first prove the following crucial formulae.

\begin{lemma}\label{lemma-aff} For any $\alpha\in\Delta$,  $h\in\h$ and $i, j\in\Z$  we have 
\begin{enumerate}
\item[(a)]
 $h(i)\otimes e_{\a}(j)\equiv h(0)\otimes e_{\a}(i+j)\,\,\big({\rm mod} \,\,{\rm span}\, \mathcal{K}_{\hg}\big);$

\item[(b)] $e_{\a}(i)\otimes e_{-\a}(-i)\equiv e_{\a}(0)\otimes e_{-\a}(0)+\frac{1}{2}
h_{\a}(i)\otimes h_{\a}(-i) \,\,\big({\rm mod} \,\,{\rm span}\, \mathcal{K}_{\hg}\big);$

\item[(c)] $e_{\a}(i)\otimes e_{\beta}(j)\equiv  e_{\a}(0)\otimes e_{\beta}(i+j)\,\,\big({\rm mod} \,\,{\rm span}\, \mathcal{K}_{\hg}\big)$, if  $ (\a+\beta,i+j) \ne (0,0)$;

\item[(d)] $d\otimes e_{\a}(i)\equiv \frac{i}{2} h_{\a}(0)\otimes e_{\a}(i) \,\,\big({\rm mod} \,\,{\rm span}\, \mathcal{K}_{\tilde{g}}\big);$

 \item[(e)] $d\otimes h_{\a}(i)\equiv ie_{\a}(0)\otimes e_{-\a}(i) \,\,\big({\rm mod} \,\,{\rm span}\, \mathcal{K}_{\tilde{g}}\big).$
\end{enumerate}
\end{lemma}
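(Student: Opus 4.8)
All five congruences are proved by exhibiting suitable elements of $\mathcal{K}_{\hg}$ (or $\mathcal{K}_{\tg}$) — that is, pairs $u\otimes v$ with $[u,v]=0$ — and taking linear combinations, working always modulo $\operatorname{span}\mathcal{K}$. The basic tool is that $x\otimes x\in\mathcal{K}$ for every $x$, hence by \eqref{xyyx} the symmetrization $x\otimes y+y\otimes x$ lies in $\operatorname{span}\mathcal{K}$; combined with the obvious fact that $u\otimes v\in\mathcal{K}$ whenever $[u,v]=0$, this lets one ``slide'' tensor factors. I would first record the following two observations that will be used repeatedly. Since $h(i)$ and $h'(j)$ commute in $\hg$ (the Cartan is abelian and the cocycle is symmetric so $[h(i),h'(j)]=i(h,h')\delta_{i+j,0}c$, which is central and kills when paired — more precisely $h(i)\otimes h'(j)$ is handled by symmetrization when $i+j\ne 0$), all ``Cartan $\otimes$ Cartan'' terms are controlled. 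And for root vectors, $[e_\a(i),e_\a(j)]=0$, so $e_\a(i)\otimes e_\a(j)\in\mathcal{K}_{\hg}$ for all $i,j$.

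\smallskip
\noindent\textbf{Parts (a), (b), (c).} For (c) with $\a+\beta$ a root (or $0$) but $(\a+\beta,i+j)\ne(0,0)$: pick $h\in\h$ with $(\a+\beta)(h)\ne 0$ — this is possible precisely because $\a+\beta\ne0$, and when $\a+\beta=0$ one instead uses $i+j\ne0$ and the element $d$. The idea is the standard telescoping trick: the commuting pair $\bigl(e_\a(i)+e_\beta(j)\bigr)\otimes\bigl(\lambda h(k)+\cdots\bigr)$ is not available directly, so instead one uses that $e_\a(i)\otimes e_{\a}(i+j-?)\in\mathcal K$ and writes $e_\a(i)\otimes e_\beta(j)$ as a combination of such terms together with $e_\a(0)\otimes e_\beta(i+j)$ via the identity obtained from $[h(k),e_\gamma(\ell)]=\gamma(h)e_\gamma(k+\ell)$: choosing $h$ with $\gamma(h)=1$, one gets $e_\gamma(\ell)= [h(0),e_\gamma(\ell)] = [h(k),e_\gamma(\ell-k)]$, and the relation $x\otimes[y,z]+[y,x]\otimes z\equiv 0$ (which follows from $y\otimes x+x\otimes y\in\operatorname{span}\mathcal K$ applied twice, after expanding) gives the shift $h(k)\otimes e_\gamma(\ell)\equiv h(0)\otimes e_\gamma(k+\ell)$, which is (a). Then (c) follows by iterating (a) in the appropriate factor, and (b) follows by computing $[e_\a(i),e_{-\a}(-i)]=h_\a(0)+i(e_\a,e_{-\a})c$ and using $[e_\a(i),e_{-\a}(j)]$ for $j\ne -i$ together with (a) to isolate the correction term $\tfrac12 h_\a(i)\otimes h_\a(-i)$ — the coefficient $\tfrac12$ coming from $(h_\a,h_\a)=2(e_\a,e_{-\a})$, recorded just before the lemma.

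\smallskip
\noindent\textbf{Parts (d), (e).} Here $d$ enters, and one uses $[d,x(n)]=n\,x(n)$, so $d\otimes x(0)\in\mathcal K_{\tg}$ for every $x\in\g$, and more generally for any commuting pair involving $d$. For (d): $d+\lambda e_\a(i)$ does not commute with anything obvious, but consider that $[d - \tfrac{i}{2}h_\a(0),\, e_\a(i)] = i\,e_\a(i) - \tfrac{i}{2}\cdot 2\,e_\a(i)=0$, so $\bigl(d-\tfrac{i}{2}h_\a(0)\bigr)\otimes e_\a(i)\in\mathcal K_{\tg}$, which gives (d) immediately. For (e): similarly one wants a Cartan-valued correction; since $[d,h_\a(i)]=i\,h_\a(i)$ and $[e_\a(0),e_{-\a}(i)]=h_\a(i)+0$, one checks $[d-i\,(\text{something}),h_\a(i)]$... more directly, $\bigl(d\otimes h_\a(i) - i\,e_\a(0)\otimes e_{-\a}(i)\bigr)$ should be killed by noting that $d\otimes h_\a(i)\equiv d\otimes[e_\a(0),e_{-\a}(i)]$ and then using the ``derivation'' relation $d\otimes[y,z]\equiv [d,y]\otimes z + y\otimes[d,z] \pmod{\operatorname{span}\mathcal K}$ (again from double symmetrization), which produces $[d,e_\a(0)]\otimes e_{-\a}(i)+e_\a(0)\otimes[d,e_{-\a}(i)] = 0 + i\,e_\a(0)\otimes e_{-\a}(i)$, giving (e).

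\smallskip
\noindent\textbf{Main obstacle.} The routine identities (the ``slide'' lemma $x\otimes[y,z]+[y,x]\otimes z\equiv 0$ and the derivation relation for $d$) must be set up carefully since $\hg$ is not perfect-as-needed everywhere and central terms $c$ appear in brackets; one has to check that whenever a bracket produces a multiple of $c$, that term is separately handled (typically $c\otimes x\equiv 0$ because $c$ is central, so $[c, \cdot]=0$ means $c\otimes x + x\otimes c\in\operatorname{span}\mathcal K$ and one shows $c\otimes x\equiv 0$ directly, e.g.\ $c=[e_\a(1),e_{-\a}(-1)]/(e_\a,e_{-\a})$ modulo lower terms). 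The genuinely delicate case is (c) when $\a+\beta=0$, where no $h\in\h$ separates the weight and one is forced to use $d$ and the condition $i+j\ne0$; keeping track of which of (a)–(e) may be invoked at that point (to avoid circularity) is the part that needs the most care. I expect (a) to be the keystone that makes everything else fall into place.
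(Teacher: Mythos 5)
Your part (d) is exactly the paper's argument: the single commuting pair $\bigl(d-\tfrac{i}{2}h_\a(0)\bigr)\otimes e_\a(i)\in\mathcal K_{\tg}$ does the job. The rest of the proposal, however, rests on two ``routine identities'' that are not available, and this is a genuine gap rather than a matter of presentation. The slide relation $x\otimes[y,z]+[y,x]\otimes z\equiv 0\ \bigl({\rm mod}\ \operatorname{span}\mathcal K_L\bigr)$ is false: the image of this element under the bracket map $u\otimes v\mapsto[u,v]$ is $[x,[y,z]]+[[y,x],z]=[y,[x,z]]$ by Jacobi, which is nonzero in general (take $x=E$, $y=F$, $z=H$ in $\sl_2$, giving $2H$), so the element does not even lie in $\mathcal M_L\supseteq\operatorname{span}\mathcal K_L$; it cannot be a consequence of symmetrization. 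The derivation relation $d\otimes[y,z]\equiv[d,y]\otimes z+y\otimes[d,z]$ does land in $\mathcal M_{\tg}$ by Jacobi, but asserting that it lies in $\operatorname{span}\mathcal K_{\tg}$ is an instance of the very zpd property this lemma is a step toward proving, so invoking it for (e) is circular. Consequently your derivations of (a) and (e) do not stand; your (c) (``iterate (a) in the appropriate factor'') does not follow from (a) — and note that (c) is a congruence modulo $\operatorname{span}\mathcal K_{\hg}$, so the element $d$ you propose to use when $\a+\beta=0$ is not even available there; and (b), the most delicate part, is not argued beyond naming the bracket to be computed.

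What the paper does for every part is what you did for (d): exhibit one explicit commuting pair and expand the tensor, absorbing cross terms by the symmetrization \eqref{xyyx} and the already-proved parts. Concretely, (a) comes from $[h(i)+e_\a(i+j),\,h(0)+e_\a(j)]=0$; (c) comes from $[e_\a(i)+e_\beta(i+j),\,e_\beta(j)+e_\a(0)]=0$, where the hypothesis $(\a+\beta,i+j)\ne(0,0)$ is exactly what kills the central term, uniformly and with no case split; (b) comes from the pair $\bigl(e_{-\a}(0)+e_\a(i)+\tfrac{\sqrt2}{2}(h_\a(0)-h_\a(i))\bigr)\otimes\bigl(e_{-\a}(-i)+e_\a(0)+\tfrac{\sqrt2}{2}(h_\a(-i)-h_\a(0))\bigr)$ together with (a); and (e) comes from $\bigl(d+\tfrac{i}{2}h_\a(0)-ie_\a(0)\bigr)\otimes\bigl(h_\a(i)-e_\a(i)+e_{-\a}(i)\bigr)$ together with (a) and (d). Finding these pairs is the actual content of the lemma, and your proposal supplies only the one for (d).
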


\begin{proof} (a)  
From $$[h(i)+e_{\a}(i+j), h(0)+e_{\a}(j)]=0$$ we see that $$(h(i)+e_{\a}(i+j))\otimes (h(0)+e_{\a}(j))\in  \mathcal{K}_{\hg}.$$ Hence we obtain (a).

(b) From $$[e_{-\a}(0)+e_{\a}(i)+\frac{\sqrt{2}}{2}(h_{\a}(0)-h_{\a}(i)),e_{-\a}(-i)+e_{\a}(0)+\frac{\sqrt{2}}{2}(h_{\a}(-i)-h_{\a}(0))]=0$$ we see that  $$(e_{-\a}(0)+e_{\a}(i)+\frac{\sqrt{2}}{2}(h_{\a}(0)-h_{\a}(i)))\otimes (e_{-\a}(-i)+e_{\a}(0)+\frac{\sqrt{2}}{2}(h_{\a}(-i)-h_{\a}(0)))\in  \mathcal{K}_{\hg}.$$
 Using (a) we get (b). 
 
 Further, (c) follows from
 $$(e_{\a}(i)+e_{\beta}(i+j))\otimes (e_{\beta}(j)+e_{\a}(0))\in \mathcal{K}_{\hat{g}},$$
and (d) follows from
$$(d-\frac{i}{2} h_{\a}(0))\otimes  e_{\a}(i)\in  \mathcal{K}_{\tilde{g}}.$$
Finally, applying (a),(d) to $$(d+\frac{i}{2}h_{\a}(0)-ie_{\a}(0))\otimes (h_{\a}(i)-e_{\a}(i)+e_{-\a}(i))\in  \mathcal{K}_{\tilde{g}}$$ we obtain (e).
\end{proof}

\begin{theorem}The untwisted affine Lie algebras $\tg$ and $\hg$ are zpd. \end{theorem}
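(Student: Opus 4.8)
The plan is to show that $\operatorname{span}\mathcal{K}_{\tg} = \mathcal{M}_{\tg}$ (and similarly for $\hg$) by showing that an arbitrary element of $\mathcal{M}_{\tg}$ can be reduced, modulo $\operatorname{span}\mathcal{K}_{\tg}$, to a "normal form" supported on a small explicit subspace, and then checking that on that subspace the bracket map is injective. Concretely, write a general element of $\tg \otimes \tg$ in terms of the PBW-type basis $\{d, c, h(i), e_\a(i)\}$. Using Lemma \ref{lemma-aff}\,(a)--(e) together with the relation \eqref{xyyx} (which lets me symmetrize any tensor modulo $\operatorname{span}\mathcal{K}$), I would push every basis tensor toward a canonical representative: by (a) and (c) any $e_\a(i)\otimes e_\b(j)$ with $\a+\b\ne 0$ or $i+j\ne 0$ becomes $e_\a(0)\otimes e_\b(i+j)$; by (b) the "Cartan-valued" part $e_\a(i)\otimes e_{-\a}(-i)$ is traded for $e_\a(0)\otimes e_{-\a}(0)$ plus a term in $h_\a(i)\otimes h_\a(-i)$; by (d),(e) any tensor involving $d$ is absorbed into $\h(0)\otimes$-type and $e_\a(0)\otimes e_{-\a}(i)$-type terms; and tensors involving $c$ vanish since $c$ is central (so $c\otimes x\in\mathcal{K}$). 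Also $h(i)\otimes h(j)$-type tensors lie in $\mathcal{K}$ whenever $i+j\ne 0$ since $[h(i),h(j)]=0$, and likewise $h(i)\otimes h(j)$ with $i+j=0$, $i\ne j$ is in $\mathcal{K}$ modulo symmetrization except that $[h(i),h(-i)]$ contributes a multiple of $c$ — this is the subtle bookkeeping point.

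After the reduction I expect the residual space of representatives to be spanned by $h(0)\otimes e_\a(i)$, $e_\a(0)\otimes e_\b(j)$, $e_\a(0)\otimes h(i)$, the central-form tensors, and the $d$-free "degree-zero core" $\g\otimes\g$. The key second step is then: given $\sum$ of such representatives lying in $\mathcal{M}_{\tg}$, i.e. with vanishing bracket, show the whole sum is already in $\operatorname{span}\mathcal{K}_{\tg}$. For this I would project the bracket onto its graded pieces: the $t^n$-component of the bracket for $n\ne 0$ forces the degree-$n$ part of the sum to lie in $\mathcal{M}$ of the simple Lie algebra $\g$ viewed appropriately; the degree-$0$ component splits into a $\g$-part and a $\C c$-part. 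Since $\g$ is semisimple hence zpd (by \cite{WXZ}, or directly by Lemma \ref{Wedge} since $\g\wedge\g\to\g$ is onto with the right dimensions after accounting for the kernel being spanned by commuting pairs — here one uses that $\g$ is zpd), the degree-$0$ $\g$-part is killed; the central-element contributions are handled by observing $e_\a(i)\otimes e_{-\a}(-i) - e_{-\a}(-i)\otimes e_\a(i)$ and $h_\a(i)\otimes h_\a(-i)$ combinations whose brackets give multiples of $c$ are themselves in $\operatorname{span}\mathcal{K}$ because $[e_\a(i)+e_{-\a}(-i), e_\a(i)+e_{-\a}(-i)]=0$ produces exactly the needed cross terms after using \eqref{xyyx}.

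I would then just need to confirm that the finitely many "leftover" coefficients — those governing the $h(0)\otimes e_\a(i)$ and similar mixed tensors, and the coefficient of $c$ — are forced to zero by the vanishing-bracket condition, because their brackets $n(x,y)\delta_{n+m,0}c$, $[h,e_\a]\otimes t^n$, etc., are linearly independent in $\hg$. This is a dimension/linear-independence check on an explicit list, entirely parallel to the $\mathcal L_q$ argument but graded by $\Z$ and by root. The case of $\hg$ is the same computation with the $d$-row of Lemma \ref{lemma-aff} (parts (d),(e)) simply deleted, and using that $\hg = [\tg,\tg]$ so every element of $\mathcal{M}_{\hg}$ sits inside $\mathcal{M}_{\tg}$.

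The hard part, I expect, will be the careful treatment of the central direction $\C c$: one must verify that every combination of tensors whose bracket lands in $\C c$ (these come from $e_\a(i)\otimes e_{-\a}(-i)$ with $i\ne 0$ and from $h_\a(i)\otimes h_\a(-i)$) is actually reachable inside $\operatorname{span}\mathcal{K}_{\tg}$, i.e. that there is no "anomaly" obstructing the coboundary. Lemma \ref{lemma-aff}\,(b) is precisely engineered for this — it shows the $h_\a(i)\otimes h_\a(-i)$ tensor and the $e_\a(i)\otimes e_{-\a}(-i)$ tensor differ, modulo $\operatorname{span}\mathcal{K}$, by the fixed element $e_\a(0)\otimes e_{-\a}(0)$ which carries no $c$, so the $c$-coefficient can be absorbed consistently; the remaining work is to check that the normalization $\Phi(c)$ can be chosen compatibly across all roots $\a$, which follows because any two $h_\a$ are related inside the zpd algebra $\g$. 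Once that is pinned down, the rest is routine graded bookkeeping.
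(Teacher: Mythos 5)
Your overall strategy coincides with the paper's: reduce an arbitrary element of $\mathcal{M}$ modulo $\operatorname{span}\mathcal{K}$ using Lemma \ref{lemma-aff}\,(a)--(e) and \eqref{xyyx}, split the residual bracket condition along the $\Z$-grading and the central direction, and dispose of the loop-algebra part by using that $\g$ is zpd (equivalently, that $\hg$ is a zad $\g\otimes 1$-module, via Lemma \ref{lemsum}). The paper packages the residual more cleanly as $(\g\otimes 1)\otimes\hg+\hat\h\otimes\hat\h$ and proves $\hg$ first, then reduces $\tg$ to $\hg$ via parts (d),(e); your order of treatment and your appeal to $\hg=[\tg,\tg]$ to get $\hg$ from $\tg$ is backwards (knowing $\operatorname{span}\mathcal{K}_{\tg}=\mathcal{M}_{\tg}$ does not give $\operatorname{span}\mathcal{K}_{\hg}=\mathcal{M}_{\hg}$, since elements of $\mathcal{K}_{\tg}$ may involve $d$), but your fallback of simply deleting (d),(e) is fine.

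The genuine gap is in the central direction. After the reduction, the Cartan part of the residual consists of tensors $h(i)\otimes h'(-i)$ with $h,h'\in\h$, and one must show: if $\sum_k i_k\,(h_k,h'_k)=0$ then $\sum_k h_k(i_k)\otimes h'_k(-i_k)\in\operatorname{span}\mathcal{K}$. Your proposed mechanism --- expanding $\bigl(e_\a(i)+e_{-\a}(-i)\bigr)\otimes\bigl(e_\a(i)+e_{-\a}(-i)\bigr)\in\mathcal{K}$ --- only yields the symmetrization relation \eqref{xyyx}; the antisymmetric combination $e_\a(i)\otimes e_{-\a}(-i)-e_{-\a}(-i)\otimes e_\a(i)$ has bracket $2h_\a(0)+2i(e_\a,e_{-\a})c\neq 0$, so it is not in $\mathcal{M}$ and the identity tells you nothing about the tensors whose brackets land in $\C c$. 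Likewise, "choosing $\Phi(c)$ compatibly across roots" is exactly the statement that needs proof, not a consequence of $\g$ being zpd. What is actually required are relations of the form $h(i)\wedge h'(-i)\cong\lambda\,k(j)\wedge k'(-j)$ obtained from commuting elements such as $h(i)+k(j)$ and $\mu h'(-i)+\nu k'(-j)$ with $i(h,h')\mu+j(k,k')\nu=0$; this is precisely the computation of Section \ref{s5}, and the paper closes the gap by observing that $\hat\h=\h\otimes\C[t,t^{-1}]+\C c$ is (an abelian summand plus) a Heisenberg algebra, hence zpd, so the Cartan residual $X_1'\in\mathcal{M}_{\hat\h}$ lies in $\operatorname{span}\mathcal{K}_{\hat\h}\subseteq\operatorname{span}\mathcal{K}_{\hg}$. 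You need to import that result (or redo its computation) to make your argument complete.
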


\begin{proof}  We will first prove that that $\hg$ is zpd. Note that $c\otimes {\hg}\in \mathcal{K}_{\hg}$. Let $X\in \mathcal{M}_{\hg}$. 
Using Lemma \ref{lemma-aff} (a)-(c), we see that $$X\in (\g\otimes 1)\otimes \hg+\hat\h\otimes \hat\h +\span\,\, \mathcal{K}_{\hat{\g}}.$$ We may assume that $X\equiv X_1+X_1' \mod \span(\K_{\hg})$ where $X_1=\sum_{i=1}^ru_i\otimes v_i\in (\g\otimes 1)\otimes \hg$ and $X_1'=\sum_{i=1}^su'_i\otimes v'_i\in \hat\h\otimes \hat\h$. From $$\sum_{i=1}^r[u_i, v_i]+\sum_{i=1}^s[u'_i, v'_i]=0,$$
and $[\g\otimes 1),\hg]\cap [\hat\h,\hat\h]=0$ we deduce that
$$\sum_{i=1}^r[u_i, v_i]=0,\,\,\,\sum_{i=1}^s[u'_i, v'_i]=0,$$
i.e., $X_1\in ((\g\otimes 1)\otimes \hg)\cap \mathcal{M}_{\hg}$ and $X_1'\in \mathcal{M}_{\hat\h}$. 
Hence
$$\mathcal{M}_{\hg}=((\g\otimes 1)\otimes \hg+\hat\h\otimes \hat\h +\span\,\, \mathcal{K}_{\hat{\g}})\cap \mathcal{M}_{\hg}=(\g\otimes 1)\otimes \hg)\cap \mathcal{M}_{\hg}+\mathcal{M}_{\hat\h} +\span\,\, \mathcal{K}_{\hat{g}}.$$
 However, $\hat\h$ is zpd and $\hat{\g}$ is a zad $\g$-module. Thus $$((\g\otimes 1)\otimes \hg)\cap \mathcal{M}_{\hg}\subseteq \span\,\, \mathcal{K}_{\hat{\g}}$$ and
$$\mathcal{M}_{\hat\h}=\span\,\, \mathcal{K}_{\hat{\h}}\subseteq \span\,\, \mathcal{K}_{\hg}.$$ Therefore we have $\mathcal{M}_{\hg}=\span\,\, \mathcal{K}_{\hg}$, as desired.

Next we prove that $\tg$ is zpd. Let $Y\in \mathcal{M}_{\tg}$. In view of Lemma \ref{lemma-aff} (d) and (e), using the fact $c\otimes {\tg}\in \mathcal{K}_{\tg}$ we see that $$Y\in \hg\otimes \hg+   \span\mathcal{K}_{\tg}.$$ Noting that  $ (\hg\otimes \hg)\cap  \mathcal{M}_{\tg}= \mathcal{M}_{\hg}$, and using the established result that $\mathcal{M}_{\hg}= \span\mathcal{K}_{\hg}$ we see that 
$$Y\in  (\hg\otimes \hg)\cap  \mathcal{M}_{\tg}+   \span\mathcal{K}_{\tg}=\mathcal{M}_{\hg}+   \span\mathcal{K}_{\tg}\subset  \span \mathcal{K}_{\hg}+  \span \mathcal{K}_{\tg}=  \span\mathcal{K}_{\tg}.$$
Thus $\mathcal{M}_{\tg}= \span\mathcal{K}_{\tg}$, i.e., $\tg$ is zpd.
 \end{proof}

\section{Commutativity preserving maps}\label{s9}

Let $\varphi$ be a linear map from one algebra  into another.
We say that  $\varphi$  {\em preserves   commutativity} if $\varphi(x)$ and $\varphi(y)$ commute whenever $x$ and $y$ commute. The problem of describing such maps has a long  history in the context of associative algebras. It originated in linear algebra and operator theory, and later, in connection with the development of the theory of functional identities, moved to noncommutative ring theory. We refer the reader to \cite[pp. 218-219]{FI} for historic details and references. One usually assumes that $\varphi$ is bijective or at least surjective. In this case a natural possibility is that $\varphi$ is a linear combination of a homomorphism or an antihomomorphism and a map having the range in the center of the target algebra. Without the surjectivity assumption the problem becomes much more involved as we have another natural possibility, i.e., maps with commutative range. The concept of a zpd Lie algebra  actually arose from
the problem  of describing not necessarily surjective commutativity preserving linear maps on finite dimensional central simple algebras \cite{BS}. The solution was based on
first showing that
 $M_n(\F)$, viewed as a Lie algebra, is zpd.

Although the notion of a commutativity preserving linear map is Lie-theoretic in nature,  to the best of our knowledge
it was studied by Lie algebra tools
  only in a few papers \cite{Wang, WZ, Wong}. Our aim now is to show how it can be handled in zpd Lie algebras. More precisely, we will assume that  the first Lie algebra is zpd and the target Lie algebra satisfies conditions suitable for using the theory of functional identities. Indeed functional identites are a standard tool in treating
 commutativity preservers, but	results of such a type are nevertheless new. In light of examples of zpd Lie algebras that have been found in previous sections, they  are now applicable to various concrete situations.

Let us begin with a simple example indicating the delicacy of the problem.

\begin{example}\label{bex}
Let $L$ be a Lie algebra in which only  linearly dependent elements commute. Then {\em every} linear map $\varphi:L\to L$ preserves commutativity. Indeed such Lie algebras usually are not zpd, but some of them, like  $\sl_2(\F)$, are. This shows that  one cannot expect that commutativity preserving maps between two zpd Lie algebras can be always nicely   described.  Some other conditions are needed, too.
\end{example}

The condition that we will require is that the target Lie algebra is a {\em $3$-free subset} of an associative unital algebra.  To avoid making this section too lengthy,
we only refer to the book \cite{FI} for the definition and basic properties of such sets, and give the proof of the next lemma without a detailed explanation of the results that are used.

\begin{lemma}\label{lkl}
Let $V$ be a vector space over a field $\F$ with {\rm char}$(\F)\ne 2$, let $A$ be a unital associative algebra over $\F$, let $\varphi:V\to A$ be a linear map and $B:V\times V\to A$ be a skew-symmetric bilinear map. Suppose that
\begin{equation}\label{jk}[\varphi(x),B(y,z)] + [\varphi(z),B(x,y)] + [\varphi(y),B(z,x)] =0,\ \forall\ x,y,z\in V.\end{equation}
If the range of $\varphi$ is a $3$-free subset of $A$, then there exist $\lambda \in Z$, the center of $A$, and a skew-symmetric bilinear map $\nu:V\times V\to Z$ such that
$$B(x,y)=\lambda [\varphi(x),\varphi(y)]+\nu(x,y),\ \forall\ x,y\in V.$$
\end{lemma}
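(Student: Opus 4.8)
The identity \eqref{jk} is exactly the hypothesis of the theory of functional identities for the map $\varphi$, so the plan is to recognize \eqref{jk} as a \emph{standard functional identity of degree $3$} and invoke the structure theory for $3$-free sets. First I would rewrite \eqref{jk} in the standard form: treating $B(y,z)$ as a function depending only on the two arguments complementary to the ``outer'' variable $\varphi(x)$, we are in the situation where a sum $\sum_i [\varphi(x_i), E_i(\bar x_i)] = 0$ holds for all $x_1,x_2,x_3\in V$, where each $E_i$ is a function of the remaining two variables; expanding the commutator, $[\varphi(x),B(y,z)] = \varphi(x)B(y,z) - B(y,z)\varphi(x)$, puts this in the shape $\sum_i \bigl(\varphi(x_i)F_i(\bar x_i) - G_i(\bar x_i)\varphi(x_i)\bigr) = 0$ with $F_i = -G_i$ given by (signed) values of $B$. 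The assumption that the range of $\varphi$ is a $3$-free subset of $A$ is precisely what licenses the conclusion of the fundamental theorem on functional identities (see \cite{FI}): each $F_i$ must be a ``quasi-polynomial'' in the $\varphi(x_j)$'s, i.e.\ there exist maps $p_{ij}:V\times V\to A$, $\mu_i, \lambda_{ij}: V \to Z$ (and central constants) so that, after bookkeeping, $B(y,z)$ is forced to have the form $\lambda\,[\varphi(y),\varphi(z)] + \nu(y,z)$ with $\lambda\in Z$ and $\nu(y,z)\in Z$.

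Concretely, the degree-$3$ functional identity $\sum_{i=1}^3 [\varphi(x_i), B(\bar x_i)]=0$ (with the cyclic labelling as in \eqref{jk}) is one of the genuinely classical cases: its general solution, when the underlying set is $3$-free, is known to be $B(y,z) = \lambda[\varphi(y),\varphi(z)] + \nu(y,z)$ with $\lambda$ central and $\nu$ central-valued and skew-symmetric. So the second step is simply to quote this solution, after checking that skew-symmetry of $B$ is consistent with (and indeed forces the skew-symmetry of) $\nu$: since $[\varphi(y),\varphi(z)]$ is already skew-symmetric in $y,z$, and $B$ is assumed skew-symmetric, the ``remainder'' $\nu(y,z) = B(y,z) - \lambda[\varphi(y),\varphi(z)]$ is automatically skew-symmetric. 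The hypothesis $\mathrm{char}(\F)\neq 2$ is needed here exactly so that skew-symmetry is not degenerate and so that the functional-identity machinery (which routinely divides by $2$) applies.

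The main obstacle — really the only nontrivial point — is the correct invocation of the functional identities apparatus: one must phrase \eqref{jk} so that it literally matches a \emph{standard} functional identity (no ``core'' obstruction, the identity is of the right degree, the relevant coefficient sets are the singletons $\{\varphi(x_i)\}$), and then cite the theorem guaranteeing that a $3$-free set admits only the ``expected'' solutions. Everything downstream — extracting $\lambda$, defining $\nu$, verifying centrality and skew-symmetry — is routine bookkeeping. The proof will therefore be short: set up the identity in standard form, apply the solution of the degree-$3$ functional identity for $3$-free sets from \cite{FI}, read off $\lambda$ and $\nu$, and note that skew-symmetry of $B$ transfers to $\nu$.
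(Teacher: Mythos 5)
Your overall strategy is the same as the paper's --- put \eqref{jk} in the standard form of a functional identity and use $3$-freeness to conclude that $B$ is a quasi-polynomial --- but there is a genuine gap at the point you dismiss as ``bookkeeping.'' What \cite[Theorem 4.13]{FI} actually gives is
$$B(x,y)=\lambda\varphi(x)\varphi(y)+\lambda'\varphi(y)\varphi(x)+\mu(x)\varphi(y)+\mu'(y)\varphi(x)+\nu(x,y)$$
with $\lambda,\lambda'\in Z$ and $\mu,\mu',\nu$ central-valued; there is no ready-made theorem asserting that the solution of this particular cyclic identity is exactly $\lambda[\varphi(x),\varphi(y)]+\nu(x,y)$ --- that assertion is essentially the lemma you are asked to prove. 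Skew-symmetry of $B$ (together with \cite[Lemma 4.4]{FI}) only yields $\lambda'=-\lambda$, $\mu'=-\mu$, and the skew-symmetry of $\nu$, leaving
$$B(x,y)=\lambda[\varphi(x),\varphi(y)]+\mu(x)\varphi(y)-\mu(y)\varphi(x)+\nu(x,y).$$
The middle term $\mu(x)\varphi(y)-\mu(y)\varphi(x)$ is itself skew-symmetric, so your observation that ``the remainder is automatically skew-symmetric'' does nothing to remove it; and it is not central-valued, so the desired conclusion does not yet follow.

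The missing step is to substitute this reduced form back into the hypothesis \eqref{jk}: the $\lambda$- and $\nu$-parts drop out (Jacobi identity and centrality), leaving $2\mu(y)[\varphi(x),\varphi(z)]+2\mu(z)[\varphi(y),\varphi(x)]+2\mu(x)[\varphi(z),\varphi(y)]=0$, and then $3$-freeness (\cite[Lemma 4.4]{FI}) together with ${\rm char}(\F)\ne 2$ forces $\mu=0$. This is also the precise place where the characteristic assumption enters --- not, as you suggest, merely to keep skew-symmetry nondegenerate. You should also record that bilinearity of $B$ forces $\mu,\mu'$ to be linear and $\nu$ bilinear (\cite[Lemma 4.6, Remark 4.7]{FI}), which you use implicitly when reading off the final form.
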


\begin{proof}
Since the range of $\varphi$ is $3$-free it follows from \cite[Theorem 4.13]{FI} that there exist elements $\lambda,\lambda'\in Z$ and maps $\mu,\mu':V\to Z$, $\nu:V\times V\to Z$
such that
$$B(x,y)= \lambda\varphi(x)\varphi(y)+\lambda'\varphi(y)\varphi(x) + \mu(x)\varphi(y)+\mu'(y)\varphi(x) + \nu(x,y),\ \forall\ x,y\in V. $$
As $B$ is bilinear,  \cite[Lemma 4.6, Remark 4.7]{FI} imply that $\mu,\mu'$ are linear maps and $\nu$ is bilinear. Further, since
$B(x,y)=-B(y,x)$ holds by assumption, it follows that
$$(\lambda + \lambda')\varphi(x)\varphi(y) + (\lambda + \lambda')\varphi(y)\varphi(x)   +(\mu + \mu')(x)\varphi(y) +  (\mu + \mu')(y)\varphi(x) + \nu(x,y) + \nu(y,x)=0.$$
Now, \cite[Lemma 4.4]{FI} implies that $\lambda=-\lambda'$, $\mu=-\mu'$, and $\nu$ is skew-symmetric. Thus, we have
$$B(x,y)= \lambda[\varphi(x),\varphi(y)] + \mu(x)\varphi(y)-\mu(y)\varphi(x)+\nu(x,y).$$
Using this form in \eqref{jk} we obtain
$$2\mu(y)[\varphi(x),\varphi(z)] + 2\mu(z)[\varphi(y),\varphi(x)] + 2\mu(x)[\varphi(z),\varphi(y)]=0.$$
Since char$(\F)\ne 2$ it follows from \cite[Lemma 4.4]{FI} that $\mu=0$.
\end{proof}

We will now derive two theorems on commutativity preservers from this lemma. To understand their meaning, it is important to mention  that Lie ideals of associative algebras are their $3$-free subsets under rather mild assumptions \cite[Corollary 5.16]{FI}. The simplest, but very illustrative example is that $\sl_n(\F)$ is a $3$-free subset of $M_n(\F)$ provided that  $n \ge 3$. The $n=2$ case is an exception and indeed the next theorem does not hold if $L=L'= \sl_2(\F)$, cf. Example \ref{bex}. Similarly,
 Lie algebras of skew-symmetric elements in associative algebras with involution  are ``usually" $3$-free subsets \cite[Corollary 5.18]{FI}, and so are  their Lie ideals \cite[Corollary 5.19]{FI}. It seems reasonable to conjecture that there are many other types of Lie algebras that are $3$-free subsets of some of their associative envelopes. However, only little is known  about this at present.

\begin{theorem}
Let $L$  and  $L'$ be a Lie algebras over a field $\F$ with {\rm char}$(\F)\ne 2$. Suppose that  $L$ is zpd, $L'$ is centerless, and  $L'$ is Lie subalgebra of an associative unital algebra $A$ whose center is equal to $\F1$.
If $L'$ is a $3$-free subset of $A$, then
 every commutativity preserving bijective linear map $\varphi:L\to L'$ is a scalar multiple of an isomorphism.
\end{theorem}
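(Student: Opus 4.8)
The plan is to build a functional identity of the form \eqref{jk} and then apply Lemma~\ref{lkl}. We may assume $L'\ne0$ (otherwise $L=0$ and there is nothing to prove); since $L'$ is centerless it is not abelian, and as $\varphi$ is a commutativity preserving bijection this forces $L$ to be non-abelian as well, so $[L,L]\ne0$. First I would consider the bilinear map $f\colon L\times L\to A$, $f(x,y)=[\varphi(x),\varphi(y)]$. Commutativity preservation means $[x,y]=0\Rightarrow f(x,y)=0$, so, $L$ being zpd, Lemma~\ref{lx} supplies a linear map $\Phi\colon[L,L]\to A$ with $[\varphi(x),\varphi(y)]=\Phi([x,y])$ for all $x,y\in L$.

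Next I would push the Jacobi identity of $L$ through $\Phi$: applying this relation to the pairs $([x,y],z)$, $([z,x],y)$, $([y,z],x)$, summing, and using $[[x,y],z]+[[z,x],y]+[[y,z],x]=0$ together with linearity of $\Phi$, one gets
$$[\varphi([x,y]),\varphi(z)]+[\varphi([z,x]),\varphi(y)]+[\varphi([y,z]),\varphi(x)]=0.$$
Setting $B(x,y):=\varphi([x,y])$ --- a skew-symmetric bilinear map $L\times L\to A$ with values in $L'$ --- this is precisely \eqref{jk} with $V=L$. Since $\mathrm{char}(\F)\ne2$ and the range $L'$ of $\varphi$ is $3$-free in $A$, Lemma~\ref{lkl} then gives $\lambda\in Z(A)=\F1$ and a skew-symmetric bilinear map $\nu\colon L\times L\to\F1$ with
$$\varphi([x,y])=\lambda[\varphi(x),\varphi(y)]+\nu(x,y),\qquad\forall\,x,y\in L.$$

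The final step is to eliminate $\nu$ and the case $\lambda=0$. One first notes $\F1\cap L'=0$: a nonzero $c1\in L'$ would give $1\in L'$, a nonzero central element of the centerless $L'$. As $\varphi([x,y])$ and $\lambda[\varphi(x),\varphi(y)]$ both lie in $L'$ ($L'$ is closed under the commutator of $A$), we get $\nu(x,y)\in\F1\cap L'=0$, so $\nu=0$. If $\lambda=0$ then $\varphi$ would kill $[L,L]$, hence $[L,L]=0$, contradicting the above; so $\lambda\ne0$. Then $\varphi([x,y])=\lambda[\varphi(x),\varphi(y)]$, and $\psi:=\lambda\varphi$ satisfies $\psi([x,y])=[\psi(x),\psi(y)]$ (as $\lambda$ is central); being bijective and linear, $\psi$ is a Lie isomorphism $L\to L'$, so $\varphi=\lambda^{-1}\psi$ is a scalar multiple of an isomorphism.

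The zpd hypothesis enters only once, to produce $\Phi$; all the real work is then hidden in Lemma~\ref{lkl}, i.e., in the theory of $3$-free subsets. The one ingredient that takes some foresight is choosing the correct bilinear map for Lemma~\ref{lkl} --- namely $B(x,y)=\varphi([x,y])$ rather than $[\varphi(x),\varphi(y)]$ itself --- and noticing that centerlessness of $L'$ is exactly what kills $\nu$ and excludes $\lambda=0$. Beyond that I expect no serious obstacle.
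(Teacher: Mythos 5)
Your proposal is correct and follows essentially the same route as the paper: use the zpd property (via Lemma \ref{lx}) to get $[\varphi(x),\varphi(y)]=\Phi([x,y])$, run the Jacobi identity through $\Phi$ to obtain \eqref{jk} with $B(x,y)=\varphi([x,y])$, apply Lemma \ref{lkl}, and then use centerlessness of $L'$ to kill $\nu$ and rule out $\lambda=0$. The only cosmetic difference is that you exclude $\lambda=0$ via injectivity and the initial observation that $L$ is non-abelian, whereas the paper argues that $L'$ would be abelian, contradicting $3$-freeness; both are fine.
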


\begin{proof} The condition that $\varphi$ preserves commutativity can be interpreted as that the map $f(x,y)= [\varphi(x),\varphi(y)]$ satisfies the condition
from the definition of the zpd property. Thus, since
 $L$ is zpd it follows (from Lemma \ref{lx})  that there exists a linear map $\Phi:[L,L]\to L'$ such that
\begin{equation}\label{foi}[\varphi(x),\varphi(y)] = \Phi([x,y]),\ \forall\ x,y\in L.\end{equation}
Applying the Jacobi identity this yields
$$[\varphi(x),\varphi([y,z])] + [\varphi(z),\varphi([x,y])] + [\varphi(y),\varphi([z,x])] =0,\ \forall\ x,y,z\in L.$$
This makes it possible for us to apply Lemma \ref{lkl}. Accordingly,
there exist  $\lambda\in \F$ and a skew-symmetric bilinear map $\nu:L\times L\to \F 1$ such that
\begin{equation}\label{fd}\varphi([x,y])= \lambda[\varphi(x),\varphi(y)]  + \nu(x,y),\ \forall\ x,y\in L.\end{equation}
However, since the center of $L'$ is $0$ by assumption it follows from \eqref{fd} that $\nu=0$. Consequently, $\lambda\varphi$ is a Lie algebra homomorphism. We only have to show that $\lambda\ne 0$. Assume, therefore, that $\lambda =0$. Then $\varphi([x,y])=0$ for all $x,y\in L$, yielding that $L$ is Abelian. Since $\varphi$ preserves commutativity this implies that $L'$ is Abelian, too. However, this is impossible for $L'$ is a $3$-free subset of $A$.
\end{proof}

The next, and the last theorem is similar. The difference is that we impose another conditions on $L$ and less conditions on $\varphi$, $
A$, and $L'$ -- in fact, $L'$ does not even appear
 since we do not  need to assume that the range of $\varphi$ is a Lie subalgebra of $A$.

\begin{theorem}\label{tat}
Let  $L$ be a  perfect zpd Lie algebra over a field $\F$ with {\rm char}$(\F)\ne 2$, and let $A$ be an associative unital algebra over $\F$ whose center $Z$ is a field. If
 $\varphi:L\to A$ is a commutativity preserving linear map whose range is a $3$-free subset of $A$, then $\varphi$ is of the form \begin{equation}\label{f}\varphi(x)=\alpha\theta(x) + \beta(x)\end{equation} where $\alpha\in Z$, $\theta$ is a Lie algebra homomorphism from $L$ into $A$ and
$\beta$ is a linear map from $L$ into $Z$.
\end{theorem}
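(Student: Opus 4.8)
The plan is to follow the same strategy as in the previous theorem, but to route around the two ingredients that are no longer available: we cannot conclude $\nu = 0$ from centerlessness of $L'$ (there is no $L'$), and we cannot argue the degenerate case away by noncommutativity of $L'$. First I would use that $L$ is zpd: since $\varphi$ preserves commutativity, the bilinear map $f(x,y) = [\varphi(x),\varphi(y)]$ into $A$ satisfies \eqref{zpd}, so by Lemma \ref{lx} there is a linear map $\Phi : [L,L] \to A$ with $[\varphi(x),\varphi(y)] = \Phi([x,y])$ for all $x,y \in L$. Because $L$ is perfect, $[L,L] = L$, so $\Phi$ is defined on all of $L$. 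Applying the Jacobi identity to $\Phi([[x,y],z]) + \Phi([[z,x],y]) + \Phi([[y,z],x]) = 0$ and rewriting each term via the displayed identity gives
$$
[\varphi(x),\Phi([y,z])] + [\varphi(z),\Phi([x,y])] + [\varphi(y),\Phi([z,x])] = 0, \quad \forall\, x,y,z \in L,
$$
which is precisely the hypothesis \eqref{jk} of Lemma \ref{lkl} with $B(x,y) = \Phi([x,y])$ (a skew-symmetric bilinear map of the pair $(x,y)$, since $[L,L]=L$ lets us regard it as a well-defined bilinear map on $L\times L$).

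Next I would invoke Lemma \ref{lkl}, whose hypotheses are now met: $\operatorname{char}(\F) \ne 2$, $A$ is unital associative, and the range of $\varphi$ is a $3$-free subset of $A$. It yields $\lambda \in Z$ and a skew-symmetric bilinear $\nu : L \times L \to Z$ such that
$$
\Phi([x,y]) = \lambda[\varphi(x),\varphi(y)] + \nu(x,y), \quad \forall\, x,y \in L.
$$
But $\Phi([x,y]) = [\varphi(x),\varphi(y)]$ from the first step, so $(1-\lambda)[\varphi(x),\varphi(y)] = \nu(x,y) \in Z$. Thus every commutator $[\varphi(x),\varphi(y)]$ is, up to the scalar $(1-\lambda)$, central. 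I would split into two cases according to whether $\lambda = 1$.

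If $\lambda \ne 1$, then $[\varphi(x),\varphi(y)] = (1-\lambda)^{-1}\nu(x,y) \in Z$ for all $x,y$, so the range of $\varphi$ has all commutators central; since $Z$ is a field this makes the range a commutative-modulo-center, indeed one can check this forces the range of $\varphi$ to fail $3$-freeness (a $3$-free set cannot have all pairwise commutators central — this is where one uses a basic structural property of $3$-free sets from \cite{FI}), unless $\varphi$ itself already has the asserted form with $\alpha = 0$, $\theta = 0$, $\beta = \varphi$; I would argue that the hypothesis that the range is $3$-free then actually forces $\varphi(L)$ into $Z$, giving \eqref{f} trivially, or eliminate this case outright. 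If $\lambda = 1$, then directly $[\varphi([x,y]),\varphi(z)]$-type manipulations are not needed: instead, since $\Phi$ restricted to $L = [L,L]$ satisfies $\Phi([x,y]) = [\varphi(x),\varphi(y)]$ and also, by definition of how $\Phi$ was built from $f$ via \eqref{e3}, we have $\varphi([x,y]) - \Phi([x,y]) =: \beta'$ measures the failure of $\varphi$ to be a homomorphism; I would set $\theta = $ the map determined by $\lambda\varphi$ adjusted by $\Phi$, extract $\alpha \in Z$ and the central-valued correction $\beta$, and verify the Jacobi/homomorphism identity for $\theta$ using \eqref{jk} once more together with $\mu = 0$ from Lemma \ref{lkl}. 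The main obstacle I expect is precisely the bookkeeping in disentangling $\varphi$, $\Phi$, and the scalars: unlike the previous theorem, here $\nu$ need not vanish and $\lambda$ need not be handled by a clean ``$L'$ is noncommutative'' dichotomy, so one must carefully use the $3$-freeness of the range of $\varphi$ (rather than of $L'$) and the fact that $Z$ is a field to show that the central discrepancies assemble into a single linear map $\beta : L \to Z$ and that $\alpha\theta := \varphi - \beta$ is genuinely multiplicative; producing $\theta$ as an honest homomorphism is the crux.
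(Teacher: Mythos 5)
There is a genuine gap, and it occurs at the point where you invoke Lemma \ref{lkl}: you apply it to the wrong bilinear map. You take $B(x,y)=\Phi([x,y])$, which by \eqref{foi} equals $[\varphi(x),\varphi(y)]$; the identity you verify for this $B$ is then nothing but the Jacobi identity inside $A$, and the conclusion of Lemma \ref{lkl} degenerates into the tautology $[\varphi(x),\varphi(y)]=\lambda[\varphi(x),\varphi(y)]+\nu(x,y)$ (so $\lambda=1$, $\nu=0$ by $3$-freeness). This carries no information about $\varphi$ itself, which is why your subsequent case analysis stalls: in the case $\lambda=1$ you are left with exactly the problem you started with, and the homomorphism $\theta$ is never produced. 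The paper instead applies Lemma \ref{lkl} to $B(x,y)=\varphi([x,y])$. The hypothesis \eqref{jk} for this $B$ is
$$[\varphi(x),\varphi([y,z])]+[\varphi(z),\varphi([x,y])]+[\varphi(y),\varphi([z,x])]=0,$$
which follows by writing each summand as $\Phi([x,[y,z]])$, etc., via \eqref{foi} (with the bracket placed inside $\varphi$ in the second slot, not inside $\Phi$) and then using the Jacobi identity in $L$. The resulting identity \eqref{fd}, namely $\varphi([x,y])=\lambda[\varphi(x),\varphi(y)]+\nu(x,y)$, is the substantive relation between $\varphi$ of brackets and brackets of $\varphi$ that drives the whole proof; your version never obtains it.

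Once \eqref{fd} is in hand the rest is short, and perfectness is used three times: first, $\Phi$ is defined on $[L,L]=L$; second, if $\lambda=0$ then $\varphi(L)=\varphi([L,L])=\nu(L,L)\subseteq Z$, contradicting $3$-freeness of the range, so $\lambda\neq0$; third, setting $\theta=\lambda^{2}\Phi$, $\alpha=\lambda^{-1}$ and $\beta=\varphi-\lambda^{-1}\theta$, one computes $\beta([x,y])=\lambda[\varphi(x),\varphi(y)]+\nu(x,y)-\lambda\Phi([x,y])=\nu(x,y)\in Z$, so $\beta(L)=\beta([L,L])\subseteq Z$, whence $\theta([x,y])=\lambda^{2}[\varphi(x),\varphi(y)]=[\theta(x)+\lambda\beta(x),\theta(y)+\lambda\beta(y)]=[\theta(x),\theta(y)]$. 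Your proposal correctly identifies producing $\theta$ as the crux but does not carry it out; the fix is to redo the Lemma \ref{lkl} step with $B(x,y)=\varphi([x,y])$.
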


\begin{proof}
Just as in the previous proof one derives that   \eqref{foi} holds for some
linear map $\Phi:[L,L]\to A$ and
\eqref{fd} holds for some $ \lambda\in Z$ and a skew-symmetric bilinear map $\nu:L\times L\to Z$. We remark that the range of $\varphi$ cannot be contained in $Z$ for it is a $3$-free subset of $A$. Since $L$ is perfect, this implies that $\lambda\ne 0$. Define $\theta,\beta:L\to A$ by
$$\theta(x)= \lambda^2\Phi(x)$$
and
$$ \beta(x)= \varphi(x) - \lambda^{-1}\theta(x).$$
Writing $\alpha$ for $\lambda^{-1}$ we see that all that remains to show is that $\beta$ maps into $Z$ and that
$\theta$ is a Lie algebra homomorphism. We have
$$
\beta([x,y])=  \varphi([x,y]) - \lambda^{-1}\theta([x,y])=  \lambda[\varphi(x),\varphi(y)]  + \nu(x,y) - \lambda\Phi([x,y])=\nu(x,y)\in Z.
$$
Since $L$ is perfect this proves that $\beta$ maps into  in $Z$. Consequently,
\begin{eqnarray*}
\theta([x,y])&=& \lambda^2\Phi([x,y])= \lambda^2[\varphi(x),\varphi(y)]\\
&=&[\lambda\varphi(x),\lambda\varphi(y)] = [\theta(x) + \lambda\beta(x),\theta(y) + \lambda\beta(y)] = [\theta(x) ,\theta(y)],
\end{eqnarray*}
proving that $\theta$ is a homomorphism.
\end{proof}

It should be mentioned that we now know  quite a few  perfect zpd Lie algebras, for example, complex finite-dimensional semisimple  Lie algebras,
 loop algebras, untwisted affine Lie algebras, quantum torus Lie algebras in Sect.7, and some Gallilei algebras. Theorem \ref{tat} therefore 
generalizes some results from \cite{Wang, WZ, Wong}. 

\bigskip

{\bf Acknowledgement.} The majority of this paper was conducted during the four authors' visit to Soochow University, China, in the August of 2016. M. Bre\v sar is partially supported by ARRS grant P1--0288; X.Guo is partially supported by NSF of China (Grant 11101380) and the Outstanding Young Talent Research Fund of Zhengzhou University (Grant 1421315071); G. Liu is partially supported by NSF of China (Grant 11301143) and the school fund of Henan University (2012YBZR031, yqpy 20140044); R.L\" u is partially supported by NSF of China (Grant 11471233, 11371134); K. Zhao is partially supported by NSF of China (Grants 11271109, 11471233) and NSERC.

\end{document}